\title[$\quito$ $\vertwo$: transcription, localization, and refinement]{\quito{} $\vertwo$: Trajectory Optimization with Uniform Error Guarantees under Path Constraints}
\thanks{S. Ganguly is with \faGroup Applied Mathematics and Physics, Graduate School of Informatics, \faUniversity Kyoto University, Japan. R.\ A.\ D'Silva is with \faGroup\ Department of Aerospace Engineering, \faUniversity\ University of Illinois at Urbana-Champaign, USA. D. Chatterjee is with \faGroup\ Systems and Control Engineering, \faUniversity\ Indian Institute of Technology, Mumbai, 400076, India.\\
  \noindent (SG): \faEnvelope\ \texttt{ganguly.siddhartha.7p@kyoto-u.ac.jp}, \faHome\ \url{https://sites.google.com/view/siddhartha-ganguly}.\\
	(RAD): \faEnvelope\ \texttt{dslvaaron24@gmail.com}, \faHome\ \url{https://sites.google.com/view/rihandsilva}.\\
	(DC): \faEnvelope\ \texttt{dchatter@iitb.ac.in}, \faHome\ \url{http://www.sc.iitb.ac.in/~chatterjee}.
}
\author[S. Ganguly]{Siddhartha Ganguly\,\orcidlink{0000-0003-2046-2061}}
\author[R. A. D'Silva]{Rihan Aaron D'Silva\, \orcidlink{0000-0002-0273-5364}}
\author[D. Chatterjee]{Debasish Chatterjee\,\orcidlink{0000-0002-1718-653X}}
\begin{document}

\maketitle

\begin{abstract}

This article introduces a new transcription, change point localization, and mesh refinement scheme for direct optimization-based solutions and for uniform approximation of optimal control trajectories associated with a class of nonlinear constrained optimal control problems (OCPs). The base transcription algorithm for which we establish the refinement algorithm is a \emph{direct multiple shooting technique} --- \(\quito\) \(\vertwo\) (Quasi-Interpolation based Trajectory Optimization). The mesh refinement technique consists of two steps ---  localization of certain irregular regions in an optimal control trajectory via wavelets, followed by a targeted \(h\)-refinement approach around such regions of irregularity. Theoretical approximation guarantees on uniform grids are presented for optimal controls with certain regularity properties along with guarantees of localization of change points by wavelet transform. Numerical illustrations are provided for control profiles involving discontinuities to show the effectiveness of the localization and refinement strategy. We also announce, and make freely available, a new software package based on \(\quito\) \(\vertwo\) along with all its functionalities for completeness. The package is available at: \url{https://github.com/chatterjee-d/QuITOv2.git}.
\end{abstract}

\section{Introduction}\label{sec:intro}

One of the key challenges in the field of optimal control and dynamic optimization is to design efficient and tractable numerical algorithms for producing optimal state-action trajectories under \emph{path} constraints --- constraints on the state-action trajectory at each instant of time. There are, broadly, two approaches to numerical optimal control: One is the \emph{indirect method} that proceeds via the Pontryagin maximum principle (PMP), leveraging necessary conditions for optimality. Tools based on the indirect method are well-known to be accurate, but it is difficult to include path constraints on the states \cite[\S 2.2.3]{ref:trelat2023control} in this framework because the resulting boundary value problem involves infinite-dimensional objects that are difficult to encode into algorithms. The second approach is the \emph{direct method}, which follows the \emph{discretize-then-optimize} paradigm: it transcribes the original (infinite-dimensional) OCP as a nonlinear finite-dimensional optimization problem post discretization and employs an optimization routine to solve it numerically.

\par Direct methods have historically been more successful, broadly speaking, at furnishing optimal state-action trajectories of constrained OCPs with relative ease because they bypass most of the difficulties involved with the indirect method, although they tend to compromise accuracy in certain cases; see the enlightening survey articles \cite{ref:JZ:ET:MC-2017, ref:JBC:RF:EL:HZ:OCP_algo:23}. The chief reason for the imprecision here is the \emph{approximation scheme}, which passes on the exploration overload for a search of admissible trajectories in appropriate infinite-dimensional spaces into some finite-dimensional vector space by means of discretization. The indicated transfer leads to a nonlinear program, for which one can employ off-the-shelf numerical solvers having the desired accuracy guarantees.

Amid the whole gamut of direct optimization methods, the \emph{direct shooting algorithms} \cite{ref:betts-book,ref:SG:NR:DC:RB-22} constitute perhaps the simplest and the most well-known direct trajectory optimization technique. Direct shooting algorithms leverage control trajectory parameterization \cite{ref:DasGanAnjCha23CDC} over the span of finitely many suitably chosen generating functions to discretize the original OCP along with the constraints, and solve the corresponding constrained nonlinear program (NLP). The \textit{pseudospectral} or the \textit{orthogonal collocation} methods \cite{ref:ross-PSMreview-2012} are another popular class of direct method that discretizes both states and control by orthogonal polynomials and Lagrange interpolation via placing collocation points in a non-uniform manner over the time horizon \cite{ref:Rao-10}. Conventional collocation schemes are primarily of two types --- on the one hand, we have the \(h\)-method, where the time horizon is divided into a (possibly nonuniform) mesh, and the state trajectory is approximated via fixed degree polynomials in each partition of the time horizon; on the other hand, in \(p\)-methods the degree of the interpolating polynomial is varied keeping the partition of the time horizon fixed. Another class of methods that uses properties of both \(h-\) and \(p-\)methods are the \(hp\)-collocation techniques \cite{ref:hpadaptive_coll}. More recently, an integrated residue minimization (IRM) \cite{ref:YN:ECK:residue} and penalty barrier function \cite{ref:neuenhofen2021dynamic} based direct transcription method was established, and it was shown that they perform better than conventional direct collocation schemes for certain classes of dynamic optimization problems. In particular, for \emph{singular} optimal control problems these techniques were able to eliminate the \emph{ringing} phenomena exhibited by control trajectories in pseudospectral collocation methods. Subsequently, a mesh refinement scheme with early termination was established for dynamical feasibility problems \cite{ref:vila2023mesh}. In \cite{ref:neuenhofen2021dynamic}, instead of showing the convergence of the numerical minimizer to the exact minimizer, it was shown that the \emph{gap} between the cost function corresponding to the numerical and the exact minimizer goes to zero as the discretization step goes to zero. In \cite{ref:MA:WWH:switch} a transcription algorithm was established for classes of OCPs where the optimal control is singular or bang bang in nature and the exact location of switching points is known a priori.

In the context of direct collocation methods, several mesh refinement algorithms have been proposed. In \cite{ref:ph_meshref} a \emph{ph}-collocation method has been established, where first a relative error tolerance is determined based on the difference between a Legendre-Gauss-Radau quadrature of the dynamics and a Lagrange polynomial approximation of the state trajectories and next, based on the tolerance a mesh refinement scheme has been set up. In \cite{ref:adaptive-meshref-Liu} an adaptive refinement scheme has been established based on decay rates of Legendre polynomial coefficients. A nonsmoothness localization and adaptive refinement procedure was established in \cite{ref:meshref_nonsmooth_detection}. A Legendre-Gauss-Radau collocation and edge detection algorithm was established in \cite{ref:meshref_jump_func}, where a jump function approximation method was employed to detect the discontinuities, and then locally, on the detected region, the mesh was refined. The preceding refinement schemes are dependent on the underlying transcription mechanism and they are generally designed to reduce approximation error/tolerance of the parameterized state trajectories with respect to the quadrature rule (which determines the type of collocation) and the error is in the \(\lpL[2]\)-sense. 

The refinement algorithm that we provide is different from existing algorithms, and uses time-frequency analysis to identify regions that require a refined mesh to accurately capture the optimal control trajectories; our numerical results in \S\eqref{sec:num_exp} show remarkably better performance in terms of accuracy and sometimes also computation-time compared to conventional direct collocation methods for singular optimal control problems. 
\subsection*{Our contributions:}
Our primary motivation comes from uniform approximation of constrained optimal control trajectories; we believe that this particular type of approximation is extremely important in engineering but has received comparatively much sparse attention relative to, e.g., \(\lpL[2]\)-approximation. Accordingly our contributions are geared towards ensuring small uniform approximation errors.
\begin{enumerate}[label=\textup{\Alph*.}, leftmargin=*]
\item \label{contrib:quito_1} (Direct multiple shooting with tight control on the uniform error.) We establish a direct multiple shooting (DMS) algorithm for a class of continuous-time optimal control problems with path constraints by parameterizing the control trajectory via a quasi-interpolation scheme on a piecewise uniform grid. A key feature of our algorithm is tight uniform error guarantees on the difference between the optimal control trajectory \(\cont\as(\cdot)\) and an approximate control trajectory \(\ulam(\cdot)\) constructed via a suitable quasi-interpolation engine. We demonstrate that given a \emph{prespecified error tolerance} \(\eps>0\), under mild regularity and structural assumptions on the problem data, certain parameters related to the quasi-interpolation scheme can be chosen so that \(\ulam(\cdot)\) constructed over a uniform grid via the quasi-interpolation scheme remains within an \(\eps\)-ball in the \emph{uniform metric} around \(\cont\as(\cdot)\).

We reiterate the fact that techniques pertaining to approximation in the uniform sense are difficult and rare, although they are perhaps the most important for several engineering/practical considerations; consequently, such an error guarantee is one of the primary contributions of this article. While the DMS algorithm is advanced for a piecewise uniform grid \(\mathcal{G}\), for simplicity we provide the error estimates for a uniform grid only.

\item \label{contrib:quito_2} (Selective mesh refinement driven by sharp localization of change points.) For an accurate representation of control trajectories that may feature \emph{change points} involving kinks, narrow peaks, valleys over certain epochs, and even discontinuities, we establish a change point localization and mesh refinement scheme:
\begin{enumerate}[label=\textup{(\ref{contrib:quito_2}\roman*)}, leftmargin=*, widest=ii, align=left]
\item \label{subcontrib:quito_2.1} The localization mechanism employs tools from multi-resolution wavelet analysis that are widely employed in the signal processing community. However, unlike in typical signal processing applications, the point of departure in our setting is that the actual ``signal" --- the optimal control trajectory --- is inaccessible for computational purposes. To this end, we feed the approximate control trajectory generated by the DMS algorithm on a sparse uniform grid to the wavelet-based localization engine to detect possible change points in the control signal. We show that at fine scales of the mother wavelet, all change points will be localized under mild hypothesis on the problem data.
\item \label{subcontrib:quito_2.3} Subsequently, we refine the mesh by adding knot points in and around the localized regions iteratively until a preset threshold is crossed. The mesh refinement scheme saves computational time compared to the case of solving the optimal control problem on a very fine uniform mesh. 
\end{enumerate}

\item \label{contrib:quito_3} (Numerics, software \(\quito\) \(\vertwo\), and a GUI.) 
\begin{enumerate}[label=\textup{(\ref{contrib:quito_3}\roman*)}, leftmargin=*, widest=ii, align=left]
\item \label{subcontrib:quito_3.1} We provide a library of numerical examples in \S\eqref{sec:num_exp} in which control trajectories exhibits bang-bang, singular, or both of the preceding features, and with these examples we demonstrate that \(\quito\) \(\vertwo\) with its localization and refinement modules performs better --- in terms of solution accuracy and sometimes also computation-time --- compared to state-of-the-art pseudospectral collocation methods and integrated residue minimization methods.
\item \label{subcontrib:quito_3.2}  For ease of reproducibility, the change point localization and the mesh refinement modules were encoded in a software package, which we call \(\quito\) \(\vertwo\), that comes with a Graphical User Interface (GUI); brief functionalities of \(\quito\) \(\vertwo\) are given in \S\ref{sec:software}.\footnote{See \cite{ref:QuITO:SoftX} for more details on a previous version of the software \(\quito\) (without any localization or mesh refinement modules), where we employed quasi-interpolation over a uniform grid to approximate the control trajectory.}
\end{enumerate}
\end{enumerate}

\subsection*{Organization}
This article unfolds as follows. The primary problem that we address in this article is formulated in \S \ref{sec:prob_statement}. Mathematical background on the type of quasi-interpolation engine we employ herein is introduced in \S\ref{sec:appen_A}. The main contribution of this article has been presented in \S\ref{sec:main_results} which consists of three major parts -- \S\ref{subsec:shooting}: where we established the direct transcription mechanism, \S\ref{subsec:detection}: where we present the wavelet theory-based localization algorithm, and \S\ref{subsubsec:meshrf_scheme} contains the mesh refinement algorithm. A library of numerical examples are presented in \S\ref{sec:num_exp} to illustrate the effectiveness of \(\quito\) \(\vertwo\). The algorithms developed herein were encoded in a software package that comes with a GUI for ease of usage. A brief description and functionalities of the software are given in \S\ref{sec:software}.
\subsection*{Notation}
We employ standard notation here. We let \(\N \Let \aset{1,2,\ldots}\) denote the set of positive integers and let \(\Z\) denote the integers. For \(d \in \N\), the vector space \(\Rbb^d\) is assumed to be equipped with standard inner product \(\inprod{v}{v'}\Let v^{\top}v'\) for every \(v,v' \in \Rbb^d\). We denote the uniform function norm via the notation \(\unifnorm{\cdot}\); more precisely, for a real-valued bounded function \(f(\cdot)\) defined on a nonempty set \(S\), it is given by \(\unifnorm{f(\cdot)} \Let \sup_{x \in S}|f(x)|\). For vectors residing in \(\Rbb^d\) we employ the notation \(\norm{\cdot}_{\infty}\) for the uniform (box) norm of vectors. The one-dimensional closed ball (interval) centered at \(x\) and of radius \(\eta>0\) is denoted by \(\Ball[x, \eta]\). Let \(\upsilon \in \N\); for \(X\) and \(Y\) nonempty open subsets of Euclidean spaces, the set of \(\upsilon\)-times continuously differentiable functions from \(X\) to \(Y\) is denoted by \(\mathcal{C}^{\upsilon}(X;Y)\). The Schwartz space of rapidly decaying \(\Rbb\)-valued functions \cite{ref:adams2003sobolev} on \(\Rbb\) is denoted by \(\mathcal{S}(\Rbb)\).

\section{Problem Statement}\label{sec:prob_statement}
Let \(d,\dimcon, \ell \in \N\) and fix a final time horizon \(\tfin>0\). Let us consider a control-affine nonlinear controlled system defined on the time interval \(\lcrc{0}{T}\):
\begin{equation}
	\label{eq:sys}
	\dot \st(t) = \sys\bigl(\st(t)\bigr)+ G \bigl(\st(t)\bigr)\cont(t)\,\,\text{for a.e.\ } t \in \lcrc{0}{T}, 
\end{equation}
where \(\st(t)\in \Rbb^d\), and \(\cont(t) \in \Rbb^{\dimcon}\) are the state and the control vectors at time \(t\); the maps \(f: \Rbb^d \lra \Rbb^d\) and \(G: \Rbb^d \lra \Rbb^{d \times \dimcon}\) represents the drift vector field and the state-dependent control matrix field, respectively. Assume that the initial state \(x(0)\Let \param \in \Rbb^d\). Let us consider the constrained nonlinear optimal control problem:
\begin{equation}
	\label{eq:OCP}
\begin{aligned}
& \inf_{\cont(\cdot) }	&& V_{\horizon}\bigl(\param,\cont(\cdot)\bigr) \Let \fcost\bigl(\st(\tfin)\bigr) + \int_{\tinit}^{\horizon} \rcost\bigl(\st(t), \cont(t)\bigr) \odif{t} \\
&  \sbjto		&&  \begin{cases}
\dot \st(t) = \sys\bigl(\st(t)\bigr)+ G \bigl(\st(t)\bigr)\cont(t)\,\,\text{for a.e.\ } t \in \lcrc{0}{T},\\
\st(\tinit)= \param,\,\, r_{F}(\st(\horizon)) \le 0,\\
h_j(\st(t),\cont(t)) \le 0\,\,\text{for a.e }t \in \lcrc{0}{\horizon},\, j \in \aset[]{1,\ldots,\ell}, \\
u(t) \in \admcont\,\,\text{for a.e }t \in \lcrc{0}{\horizon},
\end{cases}
\end{aligned}
\end{equation}
over the set of admissible control trajectories which are bounded measurable functions such that \(\cont(t) \in \admcont \subset \Rbb^{\dimcon}\) for a.e. \(t\in \lcrc{0}{T}\). We have the following problem data:
\begin{enumerate}[label=(\ref{eq:OCP}-\alph*), leftmargin=*, widest=b, align=left]

\item \label{OCPdata1} the set \(\admcont\) is a nonempty compact subset of \(\Rbb^{\dimcon}\) with the representation:
\begin{align}
    \admcont \Let \prod_{i=1}^{\dimcon}\lcrc{U^i_{\text{min}}}{U^i_{\text{max}}}\;\;\text{with }-\infty< U^{i}_{\text{min}} < U^i_{\text{max}} <+\infty\;\;\text{for each }i;\nn
\end{align}

\item \label{OCPdata2} the drift vector field \(\dummyx \mapsto f(\dummyx)\) and the control matrix field \(\dummyx \mapsto G(\dummyx)\) are locally Lipschitz continuous and there exists positive numbers \(\mathsf{C}_f>0\) and \(\mathsf{C}_{G}>0\) such that \(\norm{f(\dummyx_0)} \le \mathsf{C}_f (1+\norm{\dummyx_0})\) for all \(\dummyx_0 \in \Rbb^d\) and \(\norm{G(\dummyx_1)} \le \mathsf{C}_{G} (1+\norm{\dummyx_1})\) for all \(\dummyx_1 \in \Rbb^d\);

\item \label{OCPdata3} the \emph{instantaneous cost} \((\dummyx,\dummyu) \mapsto \rcost(\dummyx,\dummyu) \in \lcro{0}{+\infty}\) is continuous and convex, and \(\dummyx \mapsto \rcost(\dummyx,\dummyu)\) is locally Lipschitz continuous. Moreover, for some \(\overline{r}>1\), there exists \(\overline{\alpha}, \overline{\beta}>0\) such that for all admissible state-control pairs \(\rcost(\dummyx,\dummyu) \ge \overline{\alpha} \norm{\dummyu}^{\overline{r}}+\overline{\beta}\).
The \emph{terminal cost} \(\dummyx \mapsto \fcost(\dummyx)\in \lcro{0}{+\infty}\) is locally Lipschitz continuous and there exists a positive constant \(\mathsf{C}_T>0\) such that \(\abs{\fcost(\dummyx)} \le \mathsf{C}_T (1+\norm{\dummyx})\) for all \(\dummyx \in \Rbb^d\);

\item\label{OCPdata4} For each \(j=1,\ldots,\ell\) the constraint function \(\Rbb^d \times \Rbb^{\dimcon} \ni (\dummyx,\dummyu) \mapsto h_j(\dummyx,\dummyu) \in \Rbb\) and the terminal constraint function \(\Rbb^d \ni \dummyx \mapsto r_F(\dummyx) \in \Rbb\) are locally Lipschitz continuous and there exists a constants \(\mathsf{C}_{\dummyx}>0, \mathsf{C}_{\dummyu}>0,\) and \(\mathsf{C}_{F}>0\) such that \(\abs{h_j(\dummyx,\dummyu)} \le \mathsf{C}_{\dummyx} (1 + \norm{\dummyx})\), \(\abs{h_j(\dummyx,\dummyu)} \le \mathsf{C}_{\dummyu} (1 + \norm{\dummyu})\) for all \(j \in \aset[]{1,\ldots,\ell}\) and for all \((\dummyx,\dummyu) \in \Rbb^d \times \Rbb^{\nu}\), and \(\abs{r_F(\dummyx)} \le \mathsf{C}_{F} (1+\norm{\dummyx})\) for all \(\dummyx \in \Rbb^d\). 
\end{enumerate}   
\vspace{2mm}
\begin{remark}
Assumption \ref{OCPdata1} and \eqref{OCPdata2} are standard in control theory. \ref{OCPdata1} concerns the boundedness and shape of the admissible action set; it is natural since most (if not all) realistic actuators cannot deliver signals of arbitrarily large amplitude, and each (\(\Rbb\)-valued) control action is independent of the others. \ref{OCPdata2} is basic and ensures the existence of solutions to the ODE \eqref{eq:sys}. \ref{OCPdata3}--\ref{OCPdata4} concerns certain regularity and growth properties of the cost functions and the constraint functions. Under \ref{OCPdata1}--\ref{OCPdata4}, if there exists at least one admissible process \(\bigl(x\as(\cdot),u\as(\cdot)\bigr)\) for which \(V_T(\xz,u\as(\cdot)) < +\infty\), then the problem \eqref{eq:OCP} admits a solution \cite[Chapter 23]{ref:clarke_ocpbook} and we call a solution \(\cont\as:\lcrc{0}{\horizon} \lra \admcont\) an \emph{optimal control trajectory}. Moreover, in the following sections, our theoretical results will require \( u\as(\cdot) \) to be Lipschitz continuous. Consequently, these results will be presented under additional standard regularity assumptions on the problem data; see Proposition \ref{prop:opt_traj_exst}, Theorem \ref{thrm:optimal estimate}, and Theorem \ref{thrm:lin optimal estimate} ahead for more details.
\end{remark}
Next, we present a brief overview of the primary approximation tool used in this work.

\section{Mathematical background on approximate approximations}\label{sec:appen_A}
We provide a summary and a few relevant results about a particular class of quasi-interpolation technique known as \emph{approximate approximation} \cite{ref:mazyabook} on a piecewise uniform grid. We start with the \emph{uniform grid} version. Let \(h>0\) and \(\Dd>0\). Consider the \emph{uniform grid} \(\aset[]{mh \suchthat m \in \Z} \subset \Rbb\). Let \(\genfn: \Rbb \lra \Rbb\) be a \emph{generating function} that belongs to the Schwartz class \(\mathcal{S}(\Rbb)\) of rapidly decaying functions.

For a continuous function \(\Rbb \ni t \mapsto \interp(t) \in \Rbb\), the quasi-interpolation scheme known as the approximate approximation is given by the summation  
\begin{align}\label{eq:gen_quasi_sdim}
	\Rbb \ni t \mapsto (\MM \interp)(t) \Let \frac{1}{\sqrt{\Dd}}\sum_{m \in \Z}\interp(mh)\, \genfn \bigl(\tfrac{t-mh}{h \sqrt{\Dd}}\bigr).
\end{align}
The quantity \(h\) is the discretization parameter or the step size, \(\Dd\) is the shape \emph{shape parameter} of the \emph{generating function} \(\genfn (\cdot)\). The function \(\genfn(\cdot)\) satisfies the additional properties:
\begin{itemize}[leftmargin=*, widest=b, align=left,label=\(\circ\)]
\item \emph{The continuous moment condition of order \(M \in \N\)},  i.e., \begin{align}\label{eq:moment}
        &\int_\Rbb \genfn(t)\,\dd t =1 \text{ and } \int_{\Rbb} t^\alpha \genfn(t)\,\dd t = 0\nn \\& \text{ for all } \alpha \in \N\text{ satisfying}, 1\leq \alpha \le M-1. \end{align}
\item \emph{The decay condition of order \(K\)}: \(\genfn (\cdot)\) satisfies the decay condition of exponent \(K\) if there exist \(c_0 >0\) and \( K>1\) such that:
        \begin{align}\label{decay}( 1+ \abs{t})^K \abs{\tfrac{\partial}{\partial t}\genfn(t)} \leq c_0 \text{ for all }t \in \Rbb.
        \end{align}
\end{itemize}
We refer the interested readers to \cite{ref:mazyabook} for a book-length treatment of the topic. Now we state the key estimate:
\begin{theorem}{\cite[Theorem 2.25]{ref:mazyabook}}
\label{thrm:Holder-Lipschitz-estimate}
Consider a Lipschitz continuous function \(\interp:\Rbb \to \Rbb\) of Lipschitz rank \(L_q\). Let \((h,\Dd) \in \loro{0}{+\infty}^2\) and let \(\mathsf{G} \Let \aset[]{mh \suchthat m \in \Z} \subset \Rbb\) be a uniform grid of mesh size \(h\). Suppose that \(\genfn(\cdot)\) satisfies \eqref{eq:moment} and \eqref{decay}. Then 
\begin{align}\label{eq:unif_estimate_lip}
\unifnorm{(\MM \interp)(\cdot)- \interp(\cdot)} \le c_{\gamma}L_q h \sqrt{\Dd}+\Delta_0(\genfn,\Dd),
\end{align}
	where \(\Delta_0(\psi,\Dd) \Let \mathcal{E}_0(\psi,\Dd)\unifnorm{\interp(\cdot)}\) is a \emph{saturation error}, \(c_{\gamma} \Let M \cdot \Gamma (M)/\Gamma(M+2)\) is a constant (where \(\Gamma\) denotes the standard Gamma function), and the term
\(\mathcal{E}_0(\psi,\Dd)\) is given by 
\begin{align}\label{eq:saturation_decay_term}
    \mathcal{E}_0(\genfn,\Dd)(\cdot) \Let \sup_{x \in \Rbb}\sum_{\nu \in \Z\setminus \{0\}}\mathcal{F} \genfn(\sqrt{\Dd}\nu)\epower{2\pi i {x}{\nu}},
\end{align}
where \(\mathcal{F}\) is the Fourier transform operator on \(\Rbb\).
\end{theorem}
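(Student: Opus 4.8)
The plan is to control the pointwise error at an arbitrary $t \in \Rbb$ by adding and subtracting $\interp(t)$ times the quasi-interpolant of the constant function, which splits the error into a \emph{local consistency term} and a \emph{saturation term}:
\begin{align*}
(\MM\interp)(t) - \interp(t) \;=\;& \underbrace{\frac{1}{\sqrt{\Dd}}\sum_{m\in\Z}\bigl(\interp(mh) - \interp(t)\bigr)\,\genfn\Bigl(\tfrac{t-mh}{h\sqrt{\Dd}}\Bigr)}_{A(t)} \\
&\;+\; \interp(t)\,\underbrace{\Bigl(\frac{1}{\sqrt{\Dd}}\sum_{m\in\Z}\genfn\bigl(\tfrac{t-mh}{h\sqrt{\Dd}}\bigr) - 1\Bigr)}_{B(t)}.
\end{align*}
The term $\interp(t)B(t)$ will produce the saturation error $\Delta_0(\genfn,\Dd)$, while $A(t)$ will produce $c_{\gamma}L_q h\sqrt{\Dd}$; taking $\sup_{t\in\Rbb}$ at the end delivers \eqref{eq:unif_estimate_lip}.

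For $B(t)$: the map $t \mapsto \frac{1}{\sqrt{\Dd}}\sum_{m}\genfn\bigl(\tfrac{t-mh}{h\sqrt{\Dd}}\bigr)$ is $h$-periodic, and since $\genfn \in \mathcal{S}(\Rbb)$ satisfies the decay bound \eqref{decay} (so that $\genfn$ and $\mathcal{F}\genfn$ are both rapidly decreasing), the Poisson summation formula applies and gives $\frac{1}{\sqrt{\Dd}}\sum_{m}\genfn\bigl(\tfrac{t-mh}{h\sqrt{\Dd}}\bigr) = \sum_{\nu\in\Z}\mathcal{F}\genfn(\sqrt{\Dd}\,\nu)\,e^{2\pi i\nu t/h}$. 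The $\nu = 0$ term equals $\mathcal{F}\genfn(0) = \int_{\Rbb}\genfn(t)\,\odif{t} = 1$ by the normalization in \eqref{eq:moment}, so $B(t) = \sum_{\nu\ne 0}\mathcal{F}\genfn(\sqrt{\Dd}\,\nu)e^{2\pi i\nu t/h}$, whence $\abs{\interp(t)B(t)} \le \unifnorm{\interp(\cdot)}\,\sup_{t}\bigl|\sum_{\nu\ne 0}\mathcal{F}\genfn(\sqrt{\Dd}\nu)e^{2\pi i\nu t/h}\bigr| = \mathcal{E}_0(\genfn,\Dd)\unifnorm{\interp(\cdot)} = \Delta_0(\genfn,\Dd)$, exactly the claimed contribution.

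For $A(t)$: using $\abs{\interp(mh) - \interp(t)} \le L_q\abs{t - mh}$ and the substitution $s_m \Let \tfrac{t-mh}{h\sqrt{\Dd}}$ (a shifted lattice of spacing $1/\sqrt{\Dd}$) one gets $\abs{A(t)} \le L_q h\sum_{m}\abs{s_m}\,\abs{\genfn(s_m)}$, in which $\tfrac{1}{\sqrt{\Dd}}\sum_{m}\abs{s_m}\abs{\genfn(s_m)}$ is a Riemann sum for $\int_{\Rbb}\abs{s}\abs{\genfn(s)}\,\odif{s}$; the decay condition \eqref{decay} both makes this integral finite and, by the same Poisson-type estimate used for $B(t)$, lets one bound the Riemann-sum discrepancy uniformly in $t$. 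To improve the crude factor $\int_{\Rbb}\abs{s}\abs{\genfn(s)}\,\odif{s}$ to the sharp constant $c_{\gamma} = M\Gamma(M)/\Gamma(M+2)$ one invokes the full moment condition \eqref{eq:moment}: writing $\interp(mh) - \interp(t) = \int_{t}^{mh}\interp'(\tau)\,\odif{\tau}$ (legitimate since $\interp$ is Lipschitz), interchanging summation and integration, and resumming recasts the relevant constant as a weighted norm of the iterated tail kernel $s \mapsto \int_{s}^{\infty}\genfn(\sigma)\,\odif{\sigma}$; estimating this kernel using the vanishing moments $\int s^{\alpha}\genfn = 0$ for $1 \le \alpha \le M-1$ together with the decay bound yields the Gamma-function constant through a Beta-integral evaluation.

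The main obstacle, in my view, is twofold. First, $\interp$ is only Lipschitz --- in particular not integrable and far from Schwartz --- so the Fourier-analytic machinery (Poisson summation, symbol analysis) cannot be applied to $\interp$ directly; it must be confined to the pure generating-function sums appearing in $A(t)$ and $B(t)$, with all dependence on $\interp$ entering solely through $\unifnorm{\interp(\cdot)}$ and $L_q$. Second, isolating the \emph{sharp} constant $c_{\gamma}$ rather than a generating-function-dependent integral requires the careful tail-kernel bookkeeping with all $M-1$ vanishing moments, and every estimate in that step --- especially the Riemann-sum-versus-integral comparison --- must be uniform in $t \in \Rbb$, which is the delicate point.
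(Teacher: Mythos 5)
The paper does not prove Theorem~\ref{thrm:Holder-Lipschitz-estimate}: it is quoted verbatim from \cite[Theorem~2.25]{ref:mazyabook}, and the accompanying remark explicitly refers the reader to that source for the proof. So there is no in-paper argument to compare against; I can only assess your sketch on its own terms.

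Your decomposition of $(\MM\interp)(t)-\interp(t)$ into a local-consistency term $A(t)$ and a saturation term $\interp(t)B(t)$ is exactly the standard starting point in Maz'ya's approximate-approximation theory, and your treatment of $B(t)$ via Poisson summation is correct and complete: the rapid decay of $\genfn$ and $\mathcal{F}\genfn$ legitimizes the formula, the $\nu=0$ term is killed by the normalization $\int\genfn=1$ from \eqref{eq:moment}, and what remains is precisely $\mathcal{E}_0(\genfn,\Dd)$, giving $\abs{\interp(t)B(t)}\le\Delta_0(\genfn,\Dd)$. That part I would accept as written.

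The gap is in the $A(t)$ step, and specifically in obtaining the announced constant $c_{\gamma}=M\Gamma(M)/\Gamma(M+2)=1/(M+1)$ rather than a $\genfn$-dependent quantity such as $\int_{\Rbb}\abs{s}\abs{\genfn(s)}\odif{s}$. Two things are left unjustified. First, you propose to invoke \emph{all} $M-1$ vanishing moments of $\genfn$, but the function being approximated is merely Lipschitz: it possesses (a.e.) only a first derivative, so there is no Taylor remainder of order $\geq 2$ against which the higher moments $\int s^{\alpha}\genfn=0$, $2\le\alpha\le M-1$, can be played. Some mechanism — integration by parts iterated against the tail kernel $\int_s^{\infty}\genfn$, or a fractional-Taylor/Marchaud-type representation — has to be exhibited that converts those vanishing moments into usable cancellation for a first-order modulus of continuity, and you only gesture at this. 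Second, even the weaker claim that $\tfrac{1}{\sqrt{\Dd}}\sum_m\abs{s_m}\abs{\genfn(s_m)}$ is bounded \emph{uniformly in $t$} by something close to $\int\abs{s}\abs{\genfn(s)}\odif{s}$ is not an automatic consequence of the decay condition: the Poisson correction now involves the Fourier transform of $s\mapsto\abs{s}\abs{\genfn(s)}$, which is not obviously controlled by the hypotheses, and since $\abs{\cdot}$ is not smooth one cannot simply carry over the same rapid-decay argument used for $B(t)$. Both of these must be spelled out before the sketch can be considered a proof; as it stands, the Beta-integral endgame is asserted rather than derived.
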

\begin{remark}\label{rem:saturation_error}
On the right-hand side of the estimate \eqref{eq:unif_estimate_lip}, the first term converges to zero as \(h\) goes to zero. The second term \(\Delta_0(\genfn,\Dd)\) is called the \textbf{saturation error}, which can be reduced to an arbitrary small number by controlling the shape parameter \(\Dd\) associated with a particular choice of generating function \(\genfn(\cdot)\). For any \(\interp(\cdot)\) in an appropriate class of functions, \((\MM \interp)(\cdot)\) approximates \(\interp(\cdot)\), in the uniform metric, closely up to this small saturation error. This phenomenon is known as \textbf{pseudo convergence}. A lower bound on \(\Dd\) can be obtained (in some cases precise formulas can be given, see \cite{ref:maz_mfld}); see \cite[Chapter 2]{ref:mazyabook} and \cite[Chapter 3, Table 1]{ref:mazyabook} for precise error estimates and relevant data, to ensure that the saturation error always remains below a preassigned tolerance value. See \cite[Chapter 2]{ref:mazyabook}, \cite{ref:GanCha25} for a multi dimensional version and the related estimates. 
\end{remark}
\begin{remark}\label{rem:truncated_sum}
The approximation formula in \eqref{eq:gen_quasi_sdim} involves infinite sums, which can be truncated to just a few terms while maintaining tight control of the ensuing error; this is possible due to the sharp decay property enjoyed by the generating function \(\genfn(\cdot)\) (recall that \(\genfn(\cdot) \in \mathcal{S}(\Rbb)\)). Indeed, let \(\mathcal{T}>0\) and consider the interval \(\Ball[t,\mathcal{T}]\). Corresponding to the uniform approximation scheme in \eqref{eq:gen_quasi_sdim}, define the truncated sum
\begin{align}\label{eq:truncated_quasi_sdim_new}
 \Rbb \ni   t \mapsto \interp^{\dagger}(t) \Let \frac{1}{\Dd^{1/2}} \sum_{\mathclap{\substack{mh \in\, \Ball(t,\mathcal{T})\\ m \in \Z}}}\interp(mh)\,\genfn \bigl(\tfrac{t-mh}{h \sqrt{\Dd}}\bigr).
\end{align}
The approximation formula \eqref{eq:truncated_quasi_sdim_new} considers only the points \(mh\) inside the interval \(\Ball[t,\mathcal{T}]\), and thus the sum in \eqref{eq:truncated_quasi_sdim_new} 
is finite. Then the bound
\begin{align}\label{eq:truncated_bound_1}
    \abs{\interp^{\dagger}(t)-(\MM \interp)(t)} \le \mathcal{B} \bigl( \tfrac{\sqrt{\Dd}h}{\mathcal{T}} \bigr)^{K-1} \sup_{t \in \Rbb}\abs{\interp(t)}
\end{align}
holds for all \(t\in \Rbb\), where 
\begin{enumerate}[label=\textup{(R-\alph*)}, leftmargin=*, widest=b, align=left]
\item \label{trunc:rem:2} \(K>1\) is the decay exponent (see \eqref{decay}) of \(\genfn(\cdot)\), and
\item \label{trunc:rem:1} \(\mathcal{B}\) is a constant and a conservative upper bound of \(\mathcal{B}\) is \(\frac{c_0 \hat{C}}{K-1}\), where \(c_0\) is the right-hand side of \eqref{decay} and \(\hat{C}>0\) is a constant depending only on certain properties of \(\genfn(\cdot)\). The readers are referred to  \cite[\S 4.3.2, Page 50]{weisse2009global}, and \cite[\S2.3.2, Page 35]{ref:mazyabook} for concrete expressions and detailed derivations.
\end{enumerate}
An interesting case, one that we will employ in our subsequent analysis, is when \(\mathcal{T} \Let \rzero h\) for some \(\rzero >0\); this makes the bound in \eqref{eq:truncated_bound_1} independent of the parameter \(h\). Thus, \(t \mapsto \interp^{\dagger}(t)\) takes into account only the terms for which \(\abs{t/h - m} \le \rzero\), which makes the number of summands in \eqref{eq:truncated_quasi_sdim_new} independent of the step-size \(h.\) Similar truncation over intervals can be defined for the nonuniform grids as well; see \cite[Chapter 11]{ref:mazyabook}.
\end{remark}
At the heart of our direct multiple shooting algorithm \(\quito\) \(\vertwo\), is a piecewise uniform version of the approximate approximation scheme. The motivation behind employing such a scheme is the need to refine the time mesh locally, whenever needed (which will be dictated by the localization mechanism presented in \S\ref{subsec:detection}), for a better representation of the solution. Such needs may arise due to local irregularities in the behaviour of the function being approximated. To this end, we provide a brief technical background on the piecewise uniform version of the approximation engine, and we refer interested readers to \cite[Chapter 11]{ref:mazyabook} for more details. The following theorem, extracted from \cite[Chapter 11]{ref:mazyabook}, describes the formation of an approximate partition of unity using Gaussian generating functions centered on a piecewise uniform grid.
\begin{theorem}{\cite[\S 11.4.1]{ref:mazyabook}}\label{thrm:app-app-piecewise_grid_formula}
Let \(h>0\) and \(b \in \Rbb\) and let \(\Rbb \ni t \mapsto \genfn(t) \Let \tfrac{1}{\sqrt{\pi}}\epower{-\abs{t}^2}\) be the Gaussian generating function. Consider the given uniform grid \(\aset[]{hm+b \suchthat m \in \Z}\). Then there exist finite sets \(Z_1 \subset \Z\), \(Z(h_m) \subset \Z\) depending on the parameter \(h_m<\tfrac{1}{2}\), and consequently the sets \(\widehat{\mathcal{G}} \Let \aset[\big]{hm+b \suchthat m \in Z_1\setminus Z_2}\) where \(Z_2 \subset Z_1\) and \(\widecheck{\mathcal{G}}_m\Let \aset[\big]{hm + b + h h_m k \suchthat k \in Z(h_m)}\) which define the piecewise uniform grid \begin{align}\label{eq:pw_unif_grid}
 \mathcal{G} =  \{g_j\}_{j \in J} \Let \widehat{\mathcal{G}} \cup \bigcup_{m \in Z_2} \widecheck{\mathcal{G}}_m,
\end{align}
where \(J\) is the finite indexing set corresponding to the RHS\ of \eqref{eq:pw_unif_grid}. Then for given \(\eps>0\), there exist \(\Dd \in \loro{0}{+\infty}\) such that choosing the pair of parameters \((h_j,a_j)_{j \in J}\) as
\begin{equation}\label{eq:a_j_h_j}
\begin{aligned}
\begin{cases}
  h_j \Let h\sqrt{\Dd},\,a_j \Let \tfrac{1}{(\pi \Dd)^{1/2}}&\hspace{-2mm}\text{if }g_j \in \widehat{\mathcal{G}},\\
h_j \Let h h_m\sqrt{\Dd},\,a_j \Let \tfrac{\epower{-h_m^2|k|^2/(\Dd(1-h_m^2))}}{\pi\Dd\sqrt{1-h_m^2}}&\hspace{-2mm}\text{if } g_j \in \cup_{m \in Z_2} \widecheck{\mathcal{G}}_m,
\end{cases}
\end{aligned}
\end{equation} 
guarantees that
\begin{align}\label{eq:part_of_unity}
    \biggl| 1- \sum_{j \in J} a_j \;\genfn \bigl(\tfrac{t-g_j}{h_j}\bigr) \biggr| \le \eps \quad\text{for all }t \in \Rbb.
\end{align}
\end{theorem}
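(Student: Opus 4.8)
The plan is to reduce \eqref{eq:part_of_unity} to three ingredients: (i) the uniform-grid estimate of Theorem~\ref{thrm:Holder-Lipschitz-estimate} applied to the constant function; (ii) a Gaussian \emph{splitting} identity --- the heat-semigroup factorization of a Gaussian --- showing that a single coarse Gaussian equals, up to a small uniform error, a weighted superposition of narrower Gaussians on a locally refined sub-grid, with \emph{exactly} the weights and widths prescribed in \eqref{eq:a_j_h_j}; and (iii) the Gaussian tail bounds of Remark~\ref{rem:truncated_sum}. Since the refinement index set \(Z_2\) is finite (as is \(J\)), the deviation from unity splits into one saturation term, finitely many refinement terms, and a truncation tail, and I would apportion the budget \(\eps\) among them.

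First I would apply Theorem~\ref{thrm:Holder-Lipschitz-estimate} to the constant function \(\interp\equiv 1\) on the base uniform grid \(\aset{hm+b \suchthat m\in\Z}\). Since \(1\) is Lipschitz with rank \(0\) and \(\unifnorm{1}=1\), the estimate \eqref{eq:unif_estimate_lip} collapses to \(\unifnorm{(\MM 1)(\cdot)-1}\le\mathcal{E}_0(\genfn,\Dd)\); for the Gaussian \(\genfn\) one has \(\mathcal{F}\genfn(\lambda)=\epower{-\pi^2\lambda^2}\) (in the normalization of \cite{ref:mazyabook}), hence \(\mathcal{E}_0(\genfn,\Dd)\le\sum_{\nu\in\Z\setminus\{0\}}\epower{-\pi^2\Dd\nu^2}\), which is driven below \(\eps/3\) by taking \(\Dd\) sufficiently large --- a first constraint on \(\Dd\). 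The point is that \((\MM 1)(\cdot)\) is precisely the sum over the entire base grid with the coarse weight \(a=(\pi\Dd)^{-1/2}\) and width \(h\sqrt\Dd\) of \eqref{eq:a_j_h_j}; passing to the piecewise-uniform grid \(\mathcal{G}\) in \eqref{eq:pw_unif_grid} amounts to deleting the nodes \(m\in Z_2\) (each replaced by its fine sub-grid \(\widecheck{\mathcal{G}}_m\)) and, if one truncates to a finite window \(Z_1\), discarding the remaining distant nodes --- the latter contributing a Gaussian-tail error of the form \eqref{eq:truncated_bound_1}, at most \(\eps/3\) once the window is large enough.

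Second --- and here lies the heart of the argument --- I would fix \(m\in Z_2\) and show that the coarse term centered at \(hm+b\) is reproduced by its fine sub-grid: with \(s\Let t-hm-b\),
\begin{align*}
\frac{1}{(\pi\Dd)^{1/2}}\,\genfn\bigl(\tfrac{s}{h\sqrt\Dd}\bigr)
=\sum_{k\in\Z}\frac{\epower{-h_m^2|k|^2/(\Dd(1-h_m^2))}}{\pi\Dd\sqrt{1-h_m^2}}\,\genfn\bigl(\tfrac{s-hh_m k}{hh_m\sqrt\Dd}\bigr)+\mathcal{R}_m(t),
\end{align*}
with \(\unifnorm{\mathcal{R}_m(\cdot)}\) controllable. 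The identity with \(\mathcal{R}_m\equiv 0\) is the statement that the coarse Gaussian (variance proportional to \(\Dd\)) is the convolution of a Gaussian of variance proportional to \(\Dd(1-h_m^2)\) with one of variance proportional to \(\Dd h_m^2\); discretizing this convolution on the sub-grid of spacing \(hh_m\) and invoking Theorem~\ref{thrm:Holder-Lipschitz-estimate} once more --- now for the smooth, rapidly decaying Gaussian integrand, whose Lipschitz term is of order \(hh_m\sqrt\Dd\) and whose saturation term is again governed by \(\mathcal{F}\genfn(\sqrt\Dd\,\nu)\) --- produces precisely the weights \(\epower{-h_m^2|k|^2/(\Dd(1-h_m^2))}/(\pi\Dd\sqrt{1-h_m^2})\) and widths \(hh_m\sqrt\Dd\) of \eqref{eq:a_j_h_j} on \(\widecheck{\mathcal{G}}_m\), and bounds \(\mathcal{R}_m\) by a quantity of the same form as in the first step. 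Crucially, \(h_m<\tfrac{1}{2}\) forces \(1/(1-h_m^2)<\tfrac{4}{3}\), so the saturation term of \emph{every} sub-grid is comparable to that of the base grid and a \emph{single} \(\Dd\) controls all of them at once. Truncating the \(k\)-sum to the finite set \(Z(h_m)\) incurs an extra tail error of the form \eqref{eq:truncated_bound_1}; since \(Z_2\) is finite, the sets \(Z(h_m)\) can be chosen so that \(\sum_{m\in Z_2}\bigl(\unifnorm{\mathcal{R}_m(\cdot)}+(k\text{-truncation})\bigr)\le\eps/3\).

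Finally I would assemble. Adding and subtracting \((\MM 1)(t)\) together with the coarse terms at the nodes \(m\in Z_2\) yields
\[
1-\sum_{j\in J}a_j\,\genfn\bigl(\tfrac{t-g_j}{h_j}\bigr)=\bigl(1-(\MM 1)(t)\bigr)+\sum_{m\notin Z_1}(\text{coarse term}_m)+\sum_{m\in Z_2}\Bigl((\text{coarse term}_m)-\sum_{k\in Z(h_m)}(\text{fine term}_{m,k})\Bigr),
\]
whose three groups are bounded by \(\eps/3\) each by the steps above; choosing \(\Dd\), then the window \(Z_1\), then the \(Z(h_m)\) in that order yields \eqref{eq:part_of_unity} uniformly in \(t\). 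I expect the main obstacle to be the second step: extracting the exact normalization constants of \eqref{eq:a_j_h_j} from the semigroup splitting, bounding \(\mathcal{R}_m\) uniformly in \(t\) and over all refined nodes, and verifying that one shape parameter \(\Dd\) keeps the base-grid and every sub-grid saturation small simultaneously --- which is exactly what the hypothesis \(h_m<\tfrac{1}{2}\) buys.
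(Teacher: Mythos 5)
The paper itself offers no proof of this theorem: it is cited verbatim from Maz'ya's book, and the remark immediately following the statement explains that it is merely ``a formal version of the contents of \cite[\S 11.4.1]{ref:mazyabook}.'' Your reconstruction is exactly the cited construction: the Gaussian semigroup factorization $\Phi_{h\sqrt\Dd}=c\,\Phi_{hh_m\sqrt\Dd}*\Phi_{h\sqrt{\Dd(1-h_m^2)}}$, discretized on the refined sub-grid with spacing $hh_m$, reproduces the weights $\epower{-h_m^2k^2/(\Dd(1-h_m^2))}/(\pi\Dd\sqrt{1-h_m^2})$ and widths $hh_m\sqrt\Dd$ of \eqref{eq:a_j_h_j} exactly (I checked the constants and they match), while the base saturation estimate is the partition-of-unity case $\interp\equiv 1$ of Theorem~\ref{thrm:Holder-Lipschitz-estimate}, and the observation that $h_m<\tfrac12$ keeps $1/(1-h_m^2)$ uniformly bounded so a single $\Dd$ controls every sub-grid is the right reading of that hypothesis.

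Two small imprecisions worth flagging, neither of which undermines the argument. First, the error in replacing the convolution integral by the $k$-sum with step $hh_m$ is a quadrature/Poisson-summation error, not literally an instance of Theorem~\ref{thrm:Holder-Lipschitz-estimate} (which bounds the pointwise interpolation error $|(\MM f)(t)-f(t)|$); the mechanism is the same --- the Fourier transform of the Gaussian at nonzero multiples of $1/\sqrt\Dd$ --- and the resulting term has the same exponential form, but it would be cleaner to say ``by Poisson summation'' than to cite Theorem~\ref{thrm:Holder-Lipschitz-estimate} a second time. Second, as the theorem is printed, \eqref{eq:part_of_unity} claims the bound for \emph{all} $t\in\Rbb$ while $Z_1$ and hence $J$ are finite; a finite Gaussian sum decays to $0$ at infinity, so the estimate can only hold on the compact region covered by $\widehat{\mathcal{G}}$ --- the tail you budget $\eps/3$ for is precisely the term that forces this restriction, so your proof is in fact more candid about the truncation than the statement you were handed.
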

\begin{remark}
For proof of Theorem \ref{thrm:Holder-Lipschitz-estimate} we refer the reader to \cite[Theorem 2.25 and Remark 2.26]{ref:mazyabook}. Note that in Theorem \ref{thrm:Holder-Lipschitz-estimate}, \(\genfn(\cdot) \in \mathcal{S}(\Rbb)\) need not be Gaussian. Theorem \ref{thrm:app-app-piecewise_grid_formula} is a formal version of the contents of \cite[\S 11.4.1]{ref:mazyabook} where a quasi-interpolation engine was developed on piecewise uniform grids for Gaussian generating functions; similar results for general generating functions are given in \cite[\S 11.4.2]{ref:mazyabook}.
\end{remark}

 \section{Main results}\label{sec:main_results}
\label{sec:main_result}
This section contains our new direct transcription, localization, and mesh refinement algorithm \(\quito\) \(\vertwo\) along with the theoretical results concerning uniform control approximation guarantees and decay of certain wavelet coefficients of approximate control trajectories that are used for change point localization.  We begin with the \(\quito\) \(\vertwo\) algorithm in \S\ref{subsec:shooting}, and establish a change point localization mechanism using tools from signal processing in \S\ref{subsec:detection}. Finally, we present the complete transcription technique equipped with a mesh refinement module in \S\ref{subsubsec:meshrf_scheme}. Figure \ref{fig:quito_schematic} depicts the complete transcription, change point localization, and refinement process encoded in \(\quito\) \(\vertwo\).
\begin{figure}
    \centering
    \includegraphics[scale=0.65]{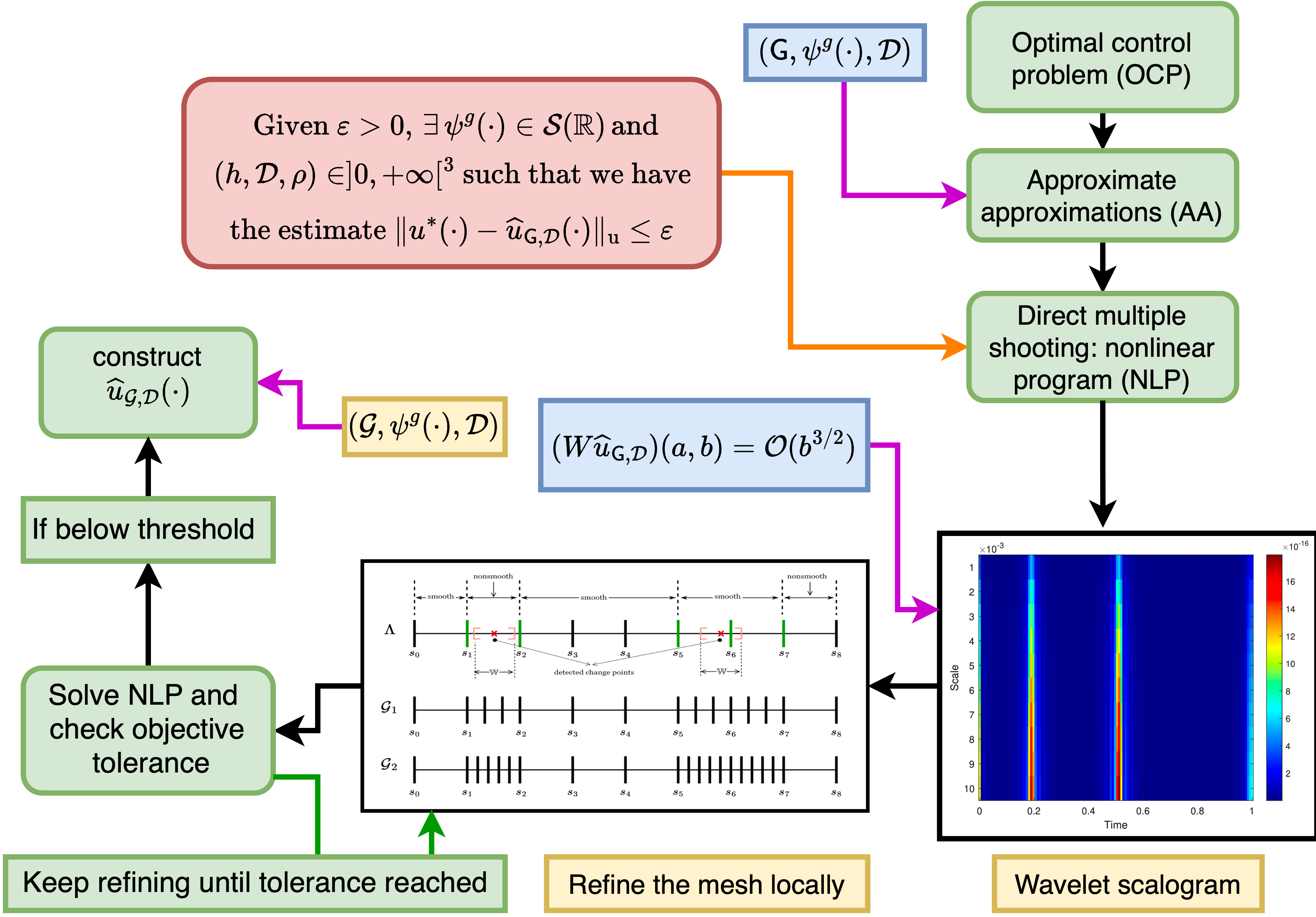}
    \caption{A schematic diagram of \(\quito\) \(\vertwo\). The grids \(\mathsf{G}\) and \(\mathcal{G}\) denotes a uniform and a piecewise uniform grid respectively.}
    \label{fig:quito_schematic}
\end{figure}
\subsection{\(\quito\) \(\vertwo\): The direct multiple shooting method.}\label{subsec:shooting}
We now provide one of our main results --- a direct multiple shooting technique (which we abbreviate as DMS in the sequel) method to solve the OCP \eqref{eq:OCP} numerically. We take the same route as \cite[\S III]{ref:SG:NR:DC:RB-22} and adapt the shooting procedure reported therein to our current setting. The primary point of departure from the existing literature is the employment of a tailored uniform approximation scheme on a piecewise uniform grid instead of a uniform grid; our motivation behind this development will be clear in \S\ref{subsubsec:meshrf_scheme}. The first step of our algorithm is a parameterization of the control trajectory \(t \mapsto \cont(t)\) using a quasi-interpolation engine designed to uniformly approximate functions with certain regularity properties on a \emph{piecewise uniform} grid. The mathematical details regarding the approximation engine and relevant error estimates are given in \S\ref{sec:appen_A}.

Consider a set of nodes \(\mathcal{G} \Let \{g_i\}_{i\in J} \subset \Rbb\), where \(J\) is a ordered finite index set, belonging to a piecewise uniform grid as specified in \eqref{eq:pw_unif_grid}. We choose generating functions from the family
\begin{align}\label{basis_set}
    \mathbb{B} \Let  \left\{ \genfn\left(\frac{\cdot-g_i}{h_i}\right) \in \mathcal{S}(\Rbb;\Rbb), \,i\in J \;\middle\vert\;
     \begin{array}{@{}l@{}}
     \genfn(t) \Let \tfrac{1}{\sqrt{\pi}}\epower{-|t|^2},\,h_i \text{ as specified in } \eqref{eq:a_j_h_j},\\\text{ and }g_i \in \mathcal{G}.
     \end{array}
        \right\},  
\end{align}
The control trajectory \(\lcrc{0}{T} \ni t \mapsto u(t) \in \mathbb{U}\) is to be approximated using the quasi-interpolation engine defined on the piecewise uniform grid \(\mathcal{G}\) and given by the candidate approximants from the set: 
\begin{align}\label{eq:Mhu_piecewise_grid_set}
    \admcon \Let \aset[\bigg]{\tilde{\cont}(\cdot) \suchthat   \lcrc{0}{T} \ni t \mapsto \tilde{\cont}(t) \Let  \sum_{g_i \in \mathcal{G}}\tilde{\cont}(g_i) a_i \epower{-|t-g_i|^2/h_i^2} \in \Rbb^{\dimcon},\;\tilde{u}(g_i) \in \Rbb^{\dimcon}}. 
\end{align}
Thus, any candidate control trajectory from \(\admcon\) can be expressed (on the piecewise uniform grid \(\mathcal{G}\) with the parameters \((a_i,h_i)\) and the generating function \(\genfn(\cdot)\)) via the approximation formula:
\begin{equation}\label{eq:Mhu_piecewise_grid}
  \lcrc{0}{T} \mapsto t \mapsto \overline{\cont}(t) \Let  \sum_{g_i \in \mathcal{G}}\overline{\cont}(g_i) a_i \epower{-|t-g_i|^2/h_i^2} \in \Rbb^{\dimcon},
\end{equation}
where \(\bigl(\overline{\cont}(g_i)\bigr)_{i\in J} \subset \Rbb^{\dimcon}\) are vectors with real entries that are to be determined. Choosing \((a_i,h_i)\) appropriately guarantees the formation of an entry-wise approximate partition of unity with the Gaussian generating functions in the set \(\mathbb{B}\); Theorem \ref{thrm:app-app-piecewise_grid_formula} in \S \ref{sec:appen_A} contains the explicit selection rules for \((a_i,h_i)\).
\begin{remark}
The Gaussian family of generating functions in \eqref{basis_set} was picked for the sake of simplicity; a large class of generating functions other than the Gaussian is available, for which the readers are referred to \cite[Chapter 11, \S11.4]{ref:mazyabook}. We employ the Gaussian family exclusively in our numerical examples reported in \S \ref{sec:num_exp}.
\end{remark}
The coefficients \(\bigl(\overline{u}(g_i)\bigr)_{i \in \fset}\subset \Rbb^{\dimcon}\) in \eqref{eq:Mhu_piecewise_grid} are vectors to be determined the objects to be picked via an optimization routine. Let \(\J^{\mathrm{d}}\) be the discrete quadrature approximation of the cost in \eqref{eq:OCP} arrived as follows. We pick an initial guess \(\eta_i\) and solve the ODE \eqref{eq:sys} on time windows \([g_i,g_{i+1}]\) for each \(i\in \fset\), by means of a suitable numerical integration technique (which could range from Euler's step through more advanced symplectic integrators):  
\begin{align}
\dot{x}_i\bigl(t,\eta_i,\overline{u}(g_i)\bigr) & = f\bigl(x_i(t,\eta_i,\overline{u}(g_i))\bigr)\nonumber\\&+G \bigl( x_i(t,\eta_i,\overline{u}(g_i))\bigr)\overline{u}(g_i)\,\text{for all}\, t \in [g_i,g_{i+1}],\,i \in \fset, \nn
\end{align}
with \(\eta_0 = \param\) and \(x_i \bigl(g_i,\eta_i,\overline{u}(g_i)\bigr)=\eta_i \in \Rbb^d\) for each \(i\in J\). The trajectories are joined together at each \(g_i\) by means of \emph{continuity/defect} condition
\begin{align}\label{shoot:continuity}
    \eta_{i+1}=x_i \bigl( g_{i+1},\eta_i,\overline{u}(g_i) \bigr),
\end{align}
which is encoded into the off-the-shelf nonlinear program (NLP) solver as equality constraints. Let \(\rcost_i(\dummyx,\dummyu)\) be a suitable quadrature approximation of the running cost. Thus, the OCP in \eqref{eq:OCP} is transcribed into the following nonlinear program for digital implementation:
\begin{equation}
\label{eq:col_NLP} 
\begin{aligned}
& \minimize_{\bigl(\eta_i,\overline{\cont}(g_i)\bigr)_{i\in \fset}}	&& \J^{\mathrm{d}} \Let \sum_{i=0}^{N}\rcost_i\bigl(\eta_i,\overline{u}(g_i)\bigr)+\fcost(\eta_N) \\
&\sbjto	 &&\begin{cases}
  \st_0=\eta_0,\\
\eta_{i+1}=x_i \bigl( g_{i+1},\eta_i,\overline{u}(g_i) \bigr),\\
h_j \bigl(\eta_i,\overline{u}(g_i)\bigr) \le 0,\,\, i\in \fset,\,j=1,\ldots,\ell,\\
\overline{u}(g_i) \in \mathbb{U},\,r_F(\eta_N) \le 0,
\end{cases}
\end{aligned}
\end{equation}
where \(N\) is the final point of the ordered finite set \(\fset\).

We now transition towards establishing one of the main results of this article, and present a technical result on Lipschitz regularity of the optimal control trajectory \(t \mapsto \cont\as(t)\), which we lift from \cite{ref:depinho-2011lipschitz}, as preparatory material.
\begin{proposition}\label{prop:opt_traj_exst}
Consider the optimal control problem \eqref{eq:OCP} with its associated data \ref{OCPdata1}--\ref{OCPdata4}. Let us impose the additional set of assumptions:
\begin{enumerate}[label=\textup{(D-\roman*)}, leftmargin=*, widest=b, align=left]

\item\label{OCPdata_new_1} The functions \(f(\cdot), G(\cdot), \rcost(\cdot)\), and \(\fcost(\cdot)\) are continuously differentiable.

\item \label{OCPdata_new_2} For all \(\dummyx \in \Rbb^d\), the map \(\dummyu \mapsto \rcost(\dummyx,\dummyu)\) is twice continuously differentiable and \((\dummyx,\dummyu) \mapsto \rcost(\dummyx,\dummyu)\) uniformly strongly convex.\footnote{See \cite[Assumption (H3)]{ref:depinho-2011lipschitz} for a definition of uniformly strongly convex functions.} Furthermore for any compact sets \(D_{\dummyu} \subset \Rbb^{\dimcon}\), and \(E _{\dummyx}\subset \Rbb^{d}\), the function \(\dummyx \mapsto \nabla_{\dummyu}\rcost(\dummyx,\dummyu)\) is Lipschitz continuous on \(E_{\dummyx}\) uniformly with respect to \(\dummyu \in D_{\dummyu}\).

\item \label{OCPdata_new_3} For each \(j=1,\ldots,\ell\), the function \((\dummyx,\dummyu) \mapsto h_j(\dummyx,\dummyu)\) is continuously differentiable; \(\dummyu \mapsto h_j(\dummyx,\dummyu)\) is twice continuously differentiable and is convex for all \(\dummyx\). Additionally, for any compact sets \(D_h \subset \Rbb^{\dimcon}\), and \(E_{h} \subset \Rbb^{d}\), the mapping \(\dummyx \mapsto \nabla_{\dummyu}h_j(\dummyx,\dummyu)\) is Lipschitz continuous on the set \(E_h\) uniformly with respect to \(\dummyu \in D_{\dummyu}\).

\item\label{OCPdata_new_4} Define the collection of active set constraints for the mixed constraints by:
\[\mathcal{T}_m(x(t),u(t)) \Let \aset[]{j \suchthat h_j(x(t),u(t))=0}.\] Then, for a.e \(t \in [\tinit,\tfin]\), non-negative numbers \((\beta_j)_{j \in \mathcal{T}_m(x\as(t),u\as(t))}\) not all zero, we have the constraint qualification condition (H5) as given in \cite[\S 2]{ref:depinho-2011lipschitz}:
\begin{align}
    \sum_{j \in \mathcal{T}_m(x\as(t),u\as(t))}\beta_j \frac{\partial}{\partial u} h_j(x\as(t),u\as(t)) \neq 0. \nn
\end{align}
\end{enumerate}
Then, if \(t \mapsto \bigl(\st\as(t),\cont\as(t)\bigr)\) is a normal extremal to problem \eqref{eq:OCP}, the optimal control trajectory \([0,\horizon] \ni t \mapsto \cont\opt(t) \in \admcont\) is Lipschitz continuous.\footnote{See \cite[\S2, Definition 2.1]{ref:vinter-shvartsman-2006regularity} for the definition of normal extremals.}
\end{proposition}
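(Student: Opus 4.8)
The plan is to observe that this statement is, by design, a specialization of the optimal-control regularity theorem of de Pinho and collaborators \cite{ref:depinho-2011lipschitz} to the control-affine problem \eqref{eq:OCP}; accordingly the argument is threefold: (i) recast \eqref{eq:OCP} in the canonical form treated in \cite{ref:depinho-2011lipschitz}; (ii) verify that the hypotheses (H1)--(H5) of that reference follow from \ref{OCPdata1}--\ref{OCPdata4}, \ref{OCPdata_new_1}--\ref{OCPdata_new_4}, and the normality assumption; and (iii) invoke the cited theorem. The only point in (i) that needs care is the pointwise control constraint \(\cont(t)\in\admcont\): since \ref{OCPdata1} makes \(\admcont\) a box, it is equivalent to the \(2\dimcon\) affine mixed inequalities \(U^{i}_{\text{min}}-\dummyu^{i}\le 0\) and \(\dummyu^{i}-U^{i}_{\text{max}}\le 0\), which are \(C^{\infty}\), convex in \(\dummyu\), and have constant (hence Lipschitz) \(\dummyu\)-gradients; adjoining them to the family \((h_j)_j\) preserves all of \ref{OCPdata_new_3} and can only strengthen the constraint qualification \ref{OCPdata_new_4}. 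After this reduction the data of \eqref{eq:OCP} sit exactly inside the framework of \cite{ref:depinho-2011lipschitz}.

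For step (ii) the checks are direct matchings: the right-hand side \(\dummyx\mapsto\sys(\dummyx)+G(\dummyx)\dummyu\) is \(C^{1}\) in \(\dummyx\) by \ref{OCPdata_new_1} and affine (hence \(C^{1}\)) in \(\dummyu\), with the linear-growth bounds in \ref{OCPdata2} furnishing the a priori bounds on admissible trajectories; the running cost \(\rcost\) is \(C^{1}\), twice continuously differentiable and uniformly strongly convex in \(\dummyu\) with \(\nabla_{\dummyu}\rcost\) Lipschitz in \(\dummyx\) uniformly over compacta — precisely \ref{OCPdata_new_2} — and \(\fcost\) is \(C^{1}\) by \ref{OCPdata_new_1}; the mixed-constraint functions \(h_j\) have the required differentiability, convexity in \(\dummyu\), and Lipschitz \(\dummyu\)-gradients by \ref{OCPdata_new_3}, \(r_F\) is \(C^{1}\), and \ref{OCPdata_new_4} is verbatim the constraint-qualification hypothesis (H5) of \cite{ref:depinho-2011lipschitz}. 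Existence of an optimal control \(\cont\as(\cdot)\) valued in \(\admcont\), to which the regularity theorem is to be applied, is already supplied by the coercivity estimate \(\rcost(\dummyx,\dummyu)\ge\overline{\alpha}\norm{\dummyu}^{\overline{r}}+\overline{\beta}\) of \ref{OCPdata3} via the direct method, as recorded after \eqref{eq:OCP} and in \cite[Chapter 23]{ref:clarke_ocpbook}; by the Pontryagin maximum principle such an optimal pair admits an adjoint arc, and the standing hypothesis is that the resulting extremal is normal.

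Step (iii) is the substantive content borrowed from \cite{ref:depinho-2011lipschitz}, which I would only sketch. Normality lets us normalize the cost multiplier to \(1\), so the (maximized) Hamiltonian \(\mathcal{H}(\dummyx,p,\dummyu)=\inprod{p}{\sys(\dummyx)+G(\dummyx)\dummyu}-\rcost(\dummyx,\dummyu)-\textstyle\sum_{j}\mu_j h_j(\dummyx,\dummyu)\) is, for each fixed \((\dummyx,p)\) and each admissible multiplier vector \((\mu_j)_j\ge 0\), uniformly strongly concave in \(\dummyu\): the drift and control terms are affine in \(\dummyu\), \(\rcost\) contributes the strong concavity coming from \ref{OCPdata_new_2}, and the \(h_j\) (the box constraints included) are convex in \(\dummyu\) by \ref{OCPdata_new_3}. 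A parametric-optimization / implicit-function argument then represents the pointwise Hamiltonian maximizer as \(\cont\as(t)=\mathcal{U}\bigl(\st\as(t),p\as(t)\bigr)\) for a locally Lipschitz map \(\mathcal{U}\); here \ref{OCPdata_new_4} is exactly what keeps the constraint multipliers \((\mu_j)_j\) essentially bounded and makes the active-set selection Lipschitz-stable. Since \(\st\as(\cdot)\) and the adjoint arc \(p\as(\cdot)\) are then absolutely continuous with essentially bounded derivatives — using the linear-growth bounds in \ref{OCPdata2} and \ref{OCPdata4} together with the essential boundedness of \((\mu_j)_j\) — the composition \(t\mapsto\mathcal{U}\bigl(\st\as(t),p\as(t)\bigr)=\cont\as(t)\) is Lipschitz on \([0,\horizon]\), which is the assertion.

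The main obstacle I foresee is not any single estimate but the careful handling of the active-set structure of the mixed constraints inside step (iii): one must show that, thanks to the constraint qualification \ref{OCPdata_new_4} and the uniform strong convexity in \ref{OCPdata_new_2}, the map \((\st\as(t),p\as(t))\mapsto\cont\as(t)\) stays locally Lipschitz across the times at which the active set of the \(h_j\) changes, and that the associated multipliers remain bounded there. The quantitative input that makes this work is \emph{uniform} (rather than merely pointwise) strong concavity of \(\mathcal{H}(\cdot,\cdot,\dummyu)\) in \(\dummyu\) over the compact range swept by \((\st\as(\cdot),p\as(\cdot))\), and the control-affine structure of \eqref{eq:sys} is precisely what prevents the dynamics from interfering with that concavity. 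All of this is carried out in \cite{ref:depinho-2011lipschitz}; our task is the certification, done in steps (i)--(ii), that the reduction lands \eqref{eq:OCP} squarely within its hypotheses.
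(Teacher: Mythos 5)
Your proposal is correct and follows the same route as the paper's proof sketch: verify that hypotheses (H1)--(H4) of \cite{ref:depinho-2011lipschitz} hold under \ref{OCPdata1}--\ref{OCPdata4} and \ref{OCPdata_new_1}--\ref{OCPdata_new_3}, note that \ref{OCPdata_new_4} is precisely (H5), and then invoke \cite[Theorem 2.2]{ref:depinho-2011lipschitz}. Your additional observations --- folding the box constraint \(\cont(t)\in\admcont\) into the family of smooth convex mixed constraints and sketching the internal Hamiltonian-concavity/implicit-function argument of the cited theorem --- are sound elaborations of the same appeal to de Pinho et al., not a different proof.
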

\begin{proof_sketch}
Notice that the assumptions \(\text{(H1)}\)--\(\text{(H4)}\) in \cite{ref:depinho-2011lipschitz} are satisfied. Thus with the constraint qualification condition \ref{OCPdata_new_4} enforced, the proof follows immediately by invoking \cite[Theorem 2.2]{ref:depinho-2011lipschitz}. \qed
\end{proof_sketch}
\begin{remark}
Control-affine systems are ubiquitous in control theory \cite{ref:agrachev2013control}, and we focus on this class of systems to adhere to a simple set of assumptions and simple analysis. While our approach can be generalized to a fully nonlinear setting, doing so would necessitate additional assumptions on the problem data to ensure the strong subregularity \cite[Chapter 17]{ref:dontchev_variational}, and consequently, Lipschitz continuity of \(u\as(\cdot)\).
\end{remark}

\begin{proposition}\label{prop:NLP_optimizers}
Consider the OCP \eqref{eq:OCP} with its associated data \ref{OCPdata1}--\ref{OCPdata4}, and \ref{OCPdata_new_1}--\ref{OCPdata_new_4}. Then, the NLP in \eqref{eq:col_NLP} obtained via our transcription algorithm employing the approximation formula \eqref{eq:Mhu_piecewise_grid} for a sufficiently small \(h>0\), admits a global optimizer.
\end{proposition}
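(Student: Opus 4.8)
The plan is to apply the Weierstrass extreme value theorem to the finite-dimensional program \eqref{eq:col_NLP}, whose decision variable $z \Let \bigl(\eta_i,\overline{\cont}(g_i)\bigr)_{i\in\fset}$ ranges over a product of Euclidean spaces. It suffices to establish that (i) the feasible set $\mathcal{F}_h$ of \eqref{eq:col_NLP} is nonempty, (ii) $\mathcal{F}_h$ is compact, and (iii) the objective $z\mapsto\J^{\mathrm{d}}(z)$ is continuous. Item (iii) is immediate: $\J^{\mathrm{d}}$ is a finite sum of the maps $(\eta_i,\overline{\cont}(g_i))\mapsto\rcost_i(\eta_i,\overline{\cont}(g_i))$ and $\eta_N\mapsto\fcost(\eta_N)$, each continuous by \ref{OCPdata3} (a fixed quadrature rule only rescales $\rcost$ by positive weights).

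For (ii) I would treat closedness and boundedness in turn. Closedness of $\mathcal{F}_h$: every defining relation is a closed condition --- $\st_0=\eta_0$ is affine; $\overline{\cont}(g_i)\in\admcont$ uses that $\admcont$ is a compact box (\ref{OCPdata1}); $h_j(\eta_i,\overline{\cont}(g_i))\le 0$ and $r_F(\eta_N)\le 0$ are closed by continuity of $h_j$ and $r_F$ (\ref{OCPdata4}); and the defect equalities $\eta_{i+1}=x_i\bigl(g_{i+1},\eta_i,\overline{\cont}(g_i)\bigr)$ cut out a closed set because the one-step propagation map $(\eta_i,\overline{\cont}(g_i))\mapsto x_i\bigl(g_{i+1},\eta_i,\overline{\cont}(g_i)\bigr)$ is continuous. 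This last point is a minor technical reason the statement restricts to small $h$: it makes the numerical integrator chosen on the window $[g_i,g_{i+1}]$ well defined and continuous in its data; for the exact flow, or any explicit scheme, it holds for every $h$, since by \ref{OCPdata2} the linear growth of $f$ and $G$ with $\overline{\cont}(g_i)\in\admcont$ precludes finite-time blow-up and yields continuous (indeed $C^1$ under \ref{OCPdata_new_1}) dependence on $(\eta_i,\overline{\cont}(g_i))$. Boundedness of $\mathcal{F}_h$: the block $(\overline{\cont}(g_i))_{i\in\fset}$ already lies in a finite product of copies of the compact box $\admcont$, so $R_{\admcont}\Let\sup_{v\in\admcont}\norm{v}<\infty$; chaining the defect relations with the bounds $\norm{f(\cdot)}\le\mathsf{C}_f(1+\norm{\cdot})$ and $\norm{G(\cdot)}\le\mathsf{C}_G(1+\norm{\cdot})$ of \ref{OCPdata2} and a Gr\"onwall estimate over $[0,\tfin]$ produces a constant $R_\eta$ --- depending only on $\norm{\param}$, $\tfin$, $\mathsf{C}_f$, $\mathsf{C}_G$, $R_{\admcont}$, hence neither on $z$ nor on $h$ --- with $\norm{\eta_i}\le R_\eta$ for every $i\in\fset$ and every $z\in\mathcal{F}_h$. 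Thus $\mathcal{F}_h$ is closed and bounded, i.e., compact.

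The crux is (i). Assuming, as in Proposition \ref{prop:opt_traj_exst}, that \eqref{eq:OCP} admits an optimal process $\bigl(\st\as(\cdot),\cont\as(\cdot)\bigr)$ that is a normal extremal, that proposition makes $\cont\as(\cdot)$ Lipschitz continuous. I would use its grid samples as a trial point for \eqref{eq:col_NLP}: set $\overline{\cont}(g_i)\Let\cont\as(g_i)\in\admcont$ and let $\eta_i$ be the forward propagation of \eqref{eq:sys} through the windows $[g_i,g_{i+1}]$ generated by these values. The control reconstructed from the samples --- the window-wise constant values, or equivalently the quasi-interpolant \eqref{eq:Mhu_piecewise_grid} --- can be made to approximate $\cont\as(\cdot)$ uniformly to within any preassigned tolerance by taking $h$ small and the shape parameter as in Theorem \ref{thrm:app-app-piecewise_grid_formula} (invoking Theorem \ref{thrm:Holder-Lipschitz-estimate} for the quasi-interpolant, or merely the Lipschitz modulus of continuity of $\cont\as(\cdot)$ for the window-wise constant values); continuous dependence of the state on the control --- again via the local Lipschitz hypothesis \ref{OCPdata2} and a Gr\"onwall estimate --- then makes $\max_{i\in\fset}\norm{\eta_i-\st\as(g_i)}$ equally small. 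Since $h_j$ and $r_F$ are locally Lipschitz (\ref{OCPdata4}), the constraint residuals at this trial point differ from those of the admissible optimal process --- which are $\le 0$ --- by a quantity that can be forced below any positive threshold, so the trial point is feasible for all sufficiently small $h$, whence $\mathcal{F}_h$ is nonempty. I expect this last step to be the main obstacle: concluding feasibility from a small perturbation of the residuals requires the constraints to hold with a uniform strict margin along $\cont\as(\cdot)$ (a Slater-type condition); absent such a margin one instead takes nonemptiness of $\mathcal{F}_h$ as inherited directly from feasibility of \eqref{eq:OCP}, or works with the mildly relaxed inequalities $h_j\le\delta_h$ and $r_F\le\delta_h$ ($\delta_h\downarrow0$) that an implementation actually enforces. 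In any of these readings, (i)--(iii) hold and the Weierstrass theorem delivers a global minimizer of \eqref{eq:col_NLP}.
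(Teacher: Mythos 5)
Your proposal reaches the same conclusion as the paper but via a different route, and it is actually more careful on one point that the paper glosses over. The paper's proof sidesteps compactness entirely: it observes that $\mathsf{F}$ is closed (from continuity of the constraint maps and compactness of $\admcont$), that $\rcost$ is coercive by strong convexity, and then invokes a coercivity-plus-closedness attainment result \cite[Theorem~2.32]{ref:beck2014introduction}. You instead prove compactness of the feasible set directly --- closedness as in the paper, but boundedness by noting that the control blocks live in $\admcont^{|\fset|}$ and the defect equalities pin the $\eta_i$ down to propagated states bounded by a Gr\"onwall estimate using the linear-growth hypotheses in \ref{OCPdata2} --- and then apply the plain Weierstrass theorem. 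Both routes are sound; the paper's is slightly shorter because it never needs a state bound, while yours is more elementary and does not lean on coercivity of the running cost.

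Where you genuinely add something is item (i). The paper does not say a word about nonemptiness of $\mathsf{F}$, even though Beck's theorem requires a nonempty closed set, and it offers no explanation for the qualifier ``for sufficiently small $h$'' in the statement. Your trial-point construction --- sample $u\as(\cdot)$ at the grid, propagate, and argue the residuals are $\mathcal{O}$ of the approximation error --- is exactly the kind of argument that qualifier is calling for, and you are right that it quietly requires either a strict margin in the path constraints along the optimal process or a small relaxation of the inequalities. That caveat is real; neither the paper nor you can close it without an additional hypothesis, and flagging it is the correct move. One small fix to your write-up: you invoke Proposition \ref{prop:opt_traj_exst} for Lipschitz continuity of $u\as(\cdot)$, which presumes a normal extremal exists; that presumption is the same one the paper implicitly makes when it appeals to \cite[Chapter~23]{ref:clarke_ocpbook} for existence of an optimal process under \ref{OCPdata1}--\ref{OCPdata4}, so you should state it as a standing assumption rather than derive it.
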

\begin{proof}
Define the feasible set for the NLP \eqref{eq:col_NLP} by
\begin{equation}\label{eq:feasible_set}
   \mathsf{F} \Let   \left\{\bigl(\eta_i,\bar{u}(g_i)\bigr) \;\middle\vert\;  
    \begin{array}{@{}l@{}}
      x_0 = \eta_0, h_j(\eta_i, \bar{u}(g_i) ) \le 0, \bar{u}(g_i) \in \Ubb, r_F(\eta_N) \le 0, \\ \eta_{i+1} = \st \bigl(g_{i+1},\eta_i,\bar{u}(g_i) \bigr)
        \end{array}
        \right\}.
\end{equation}
Note that
\begin{enumerate}[label=(C-\alph*), leftmargin=*, widest=b, align=left]
\item \label{cont_of_maps_1} the map \((\eta_i, \bar{u}(g_i)) \mapsto h_j ((\eta_i, \bar{u}(g_i))\) is continuous for all \(j \in \aset[]{1,\ldots,\ell}\); 
\item \label{cont_of_maps_2} the map \(\eta_N \mapsto r_F(\eta_N)\) is continuous;
\item \label{cont_of_maps_3} the map \((\eta_i, \bar{u}(g_i)) \mapsto \st \bigl(g_{i+1},\eta_i, \bar{u}(g_i)\bigr)\) is continuous due to the fact that the vector fields \(f(\cdot)\) and \(G(\cdot)\) are continuously differentiable, and in particular, the solution trajectory is absolutely continuous and bounded.  
\end{enumerate}
In view of \eqref{eq:feasible_set}, the NLP reads
\begin{equation}
\label{eq:col_NLP_feasible} 
\begin{aligned}
& \minimize_{\bigl(\eta_i,\overline{\cont}(g_i)\bigr)_{i\in J}}	&& \J^{\mathrm{d}} \Let \sum_{i=0}^{N}\rcost_i\bigl(\eta_i,\overline{u}(g_i)\bigr)+\fcost(\eta_N) \\
&\sbjto	 &&\begin{cases}
  \bigl( \eta_i, \bar{u}(g_i) \bigr) \in \mathsf{F} \quad\text{for all }i\in J.
\end{cases}
\end{aligned}
\end{equation}
Observe that, \(\mathsf{F}\) is closed set, which follows immediately due to the continuity of the constraint maps in \eqref{cont_of_maps_1}--\eqref{cont_of_maps_3} and compactness of \(\Ubb\). Note that, \(\rcost(\cdot,\cdot)\) strongly convex, and thus it is coercive/near-monotone, i.e. \(\lim_{\norm{(\dummyx,\dummyu)} \to +\infty}c(\dummyx,\dummyu) = +\infty\). Then, the existence of a global optimizer over the feasible set \(\mathsf{F}\) follows immediately from \cite[Theorem 2.32]{ref:beck2014introduction}.
\end{proof}

\begin{theorem}\label{thrm:optimal estimate}
Consider the optimal control problem \eqref{eq:OCP} with its associated data \ref{OCPdata1}--\ref{OCPdata4}, \ref{OCPdata_new_1}--\ref{OCPdata_new_4}, let the pair \(\bigl(\st\as(\cdot),\cont\as(\cdot)\bigr)\) be a normal extremal to problem \eqref{eq:OCP}, and suppose that the assumptions (II.1)--(II.6) in \cite{ref:malanowski2020convergence} hold.\footnote{Refer to footnote \(3\) for ``normal extremals".} 

For \((h,\Dd,\rzero) \in \loro{0}{+\infty}^3\):
\begin{enumerate}[label=\textup{(\roman*)}, leftmargin=*, widest=b, align=left]
    \item \label{thrm:data:i} define the set of grid points in a \(\rzero h\) radius interval around each \(t\) by \(\unifgrid \Let \aset[]{mh \suchthat m \in \Z} \cap \Ball(t,\rzero h)\) and consider the special case of \(\mathcal{U}\) (defined in \eqref{eq:Mhu_piecewise_grid_set}) given by
\begin{equation}\label{e:UF def}
   \mathcal{U}' \Let   \left\{\lcrc{0}{T} \ni t \mapsto \frac{1}{\sqrt{\pi \Dd}}\sum_{g  \in \unifgrid \cap \Tgrid} \hspace{-4mm}\bar{u}(g) \; \epower{-|t-g|^2/\Dd h^2} \in \Rbb^{\dimcon} \;\middle\vert\;  
    \begin{array}{@{}l@{}}
       \Tgrid \Let \aset[]{mh \suchthat m \in \Z} \\ \cap \lcrc{0}{T},\text{ and } (\bar{u}(g))_{g\in \Tgrid} \\  \text{ are the control}\\ \text{coefficients}
        \end{array}
        \right\};
\end{equation}

\item\label{thrm:data:ii} let \(\ulam(\cdot)\) be the control trajectory obtained by solving
\begin{equation}
	\label{eq:OCP_finite}\tag{(OCP\(\mathrm{'}\))}
\begin{aligned}
& \inf_{\cont(\cdot) \in \mathcal{U}'}	&& V_{\horizon}(\param,\cont(\cdot)) \Let \fcost\bigl(\st(\tfin)\bigr) + \int_{\tinit}^{\horizon} \rcost\bigl(\st(t), \cont(t)\bigr) \odif{t} \\
&  \sbjto		&&  \begin{cases}
\dot{x}(t) = f(x(t))+G(x(t))u(t) \,\,\text{for a.e }t \in \lcrc{0}{\horizon},\\
\st(\tinit)= \param,\,\, r_{F}(\st(\horizon)) \le 0,\\
h_j(\st(t),\cont(t)) \le 0\,\,\text{for all }t \in \lcrc{0}{\horizon},\,j \in\aset[]{1,\ldots,\ell}, \\
u(t) \in \mathbb{U}\,\,\text{for a.e }t \in \lcrc{0}{\horizon},
\end{cases}
\end{aligned}
\end{equation}
using the transcription algorithm in \S\ref{subsec:shooting} via the uniform Euler discretization of the cost and the dynamics;

\item\label{thrm:data:iii} let \((\dummyopt(g))_{g \in \Tgrid} \subset \Rbb^{\dimcon}\) be the optimizers of the ensuing NLP. 
\end{enumerate}
Define the extension \(\extdummyopt(\cdot)\) of the sequence \((\dummyopt(g))_{g \in \Tgrid}\) by
\begin{equation}\label{eq:extended_optimizers}
\begin{aligned}
\aset[]{mh \suchthat m \in \Z} \ni g \mapsto \extdummyopt(g) \Let\begin{cases}
\dummyopt(0)~&\text{for}~g<0,\\
\dummyopt(T)~&\text{for} ~g>T,\\
\dummyopt(g) &\text{elsewhere},
\end{cases}
\end{aligned}
\end{equation} 
and the approximate control trajectory (quasi-interpolated in terms of \(\extdummyopt(\cdot)\))
\begin{align}\label{eq:approx_cont_traj}
   \lcrc{0}{T} \ni t \mapsto \ulam(t) \Let \frac{1}{\sqrt{\pi\Dd}} \sum_{g \in \unifgrid}\extdummyopt(g) \; \epower{-|t-g|^2/\Dd h^2}. 
\end{align}

Under the preceding premise, for every uniform error margin \(\eps>0\), one can pick a triplet \((h,\Dd,\rzero) \in \loro{0}{+\infty}^3\) such that
\begin{equation}
	\label{eq:optimal_estimate}
    \unifnorm{\cont\as(\cdot)-\ulam(\cdot)} \le \eps. \nn
\end{equation}
\end{theorem}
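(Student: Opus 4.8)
The plan is to compare $\cont\as(\cdot)$ with $\ulam(\cdot)$ through the full, untruncated Gaussian quasi-interpolant of the optimal control, allotting one quarter of the tolerance $\eps$ to each of four error sources. Since the extremal is normal and \ref{OCPdata_new_1}--\ref{OCPdata_new_4} hold, Proposition \ref{prop:opt_traj_exst} gives that $\cont\as(\cdot)$ is Lipschitz on $\lcrc{0}{T}$; extend it to $\Rbb$ by the boundary values $\cont\as(0)$ and $\cont\as(T)$, keeping the symbol $\cont\as$ --- this leaves the Lipschitz rank $L_q$ unchanged, and since $\admcont$ is compact it keeps $\unifnorm{\cont\as(\cdot)}<+\infty$. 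Write $\MM$ for the approximate-approximation operator \eqref{eq:gen_quasi_sdim} associated with the Gaussian generating function, the uniform grid $\aset[]{mh\suchthat m\in\Z}$, and shape parameter $\Dd$; its value at a point depends only on grid samples, so the same formula applies to the extended optimizer sequence $\extdummyopt(\cdot)$ of \eqref{eq:extended_optimizers}, in which case it coincides with \eqref{eq:approx_cont_traj} but with the sum extended over all of $\aset[]{mh\suchthat m\in\Z}$ rather than over $\Ball(t,\rzero h)$. Then, on $\lcrc{0}{T}$,
\[
\unifnorm{\cont\as(\cdot)-\ulam(\cdot)}\le\underbrace{\unifnorm{\cont\as(\cdot)-(\MM\cont\as)(\cdot)}}_{\text{(A)}}+\underbrace{\unifnorm{(\MM\cont\as)(\cdot)-(\MM\extdummyopt)(\cdot)}}_{\text{(B)}}+\underbrace{\unifnorm{(\MM\extdummyopt)(\cdot)-\ulam(\cdot)}}_{\text{(C)}},
\]
and the three terms will be driven down by choosing $\Dd$, then $\rzero$, then $h$, in that order.

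Term (A) is handled by Theorem \ref{thrm:Holder-Lipschitz-estimate}, which gives $\text{(A)}\le c_\gamma L_q h\sqrt{\Dd}+\mathcal{E}_0(\genfn,\Dd)\unifnorm{\cont\as(\cdot)}$. Invoking pseudo-convergence (Remark \ref{rem:saturation_error}), first fix $\Dd$ large enough that $\mathcal{E}_0(\genfn,\Dd)\unifnorm{\cont\as(\cdot)}<\eps/4$ and, enlarging $\Dd$ if needed, also $\mathcal{E}_0(\genfn,\Dd)\le1$. With $\Dd$ now frozen, term (C) is precisely the truncation estimate of Remark \ref{rem:truncated_sum} applied to the bounded grid sequence $\extdummyopt(\cdot)$, whose samples lie in $\admcont$: by \eqref{eq:truncated_bound_1} with $\mathcal{T}=\rzero h$ it is at most $\mathcal{B}\bigl(\sqrt{\Dd}/\rzero\bigr)^{K-1}\sup_{v\in\admcont}\norm{v}$, which is free of $h$, so choose $\rzero$ large enough that $\text{(C)}<\eps/4$.

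It remains to absorb (B) and the leftover $c_\gamma L_q h\sqrt{\Dd}$ from (A) by shrinking $h$. Since $\MM\cont\as$ and $\MM\extdummyopt$ use the same grid and shape parameter, their difference at $t$ equals $\tfrac{1}{\sqrt{\pi\Dd}}\sum_{m\in\Z}\bigl(\cont\as(mh)-\extdummyopt(mh)\bigr)\epower{-|t-mh|^2/\Dd h^2}$; pulling out the supremum of the coefficients and using the approximate partition of unity $\tfrac{1}{\sqrt{\pi\Dd}}\sum_{m\in\Z}\epower{-|t-mh|^2/\Dd h^2}\le1+\mathcal{E}_0(\genfn,\Dd)\le2$ (the uniform-grid specialization of Theorem \ref{thrm:app-app-piecewise_grid_formula}, equivalently Poisson summation) yields $\text{(B)}\le2\sup_{g\in\Tgrid}\norm{\cont\as(g)-\dummyopt(g)}$, because off $\lcrc{0}{T}$ the two sequences agree node-by-node with the corresponding boundary values. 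By the hypothesized assumptions (II.1)--(II.6) of \cite{ref:malanowski2020convergence}, together with normality of the extremal and \ref{OCPdata_new_1}--\ref{OCPdata_new_4}, the optimizers of the Euler-discretized transcription of \S\ref{subsec:shooting} reproduce the optimal control at the grid nodes up to $o(1)$ error --- in fact with an $O(h)$ rate --- so $\sup_{g\in\Tgrid}\norm{\cont\as(g)-\dummyopt(g)}\to0$ as $h\to0^{+}$. Choosing $h$ small enough that $c_\gamma L_q h\sqrt{\Dd}<\eps/4$ and $2\sup_{g\in\Tgrid}\norm{\cont\as(g)-\dummyopt(g)}<\eps/4$ and adding the four $\eps/4$ bounds gives $\unifnorm{\cont\as(\cdot)-\ulam(\cdot)}\le\eps$.

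The decisive step is the control-convergence input feeding (B): one must justify that the NLP produced by the transcription of \S\ref{subsec:shooting} applied to \eqref{eq:OCP_finite} --- whose admissible controls already have the truncated quasi-interpolant form \eqref{e:UF def} --- delivers node values $\dummyopt(g)$ lying within the scope of the first-order Euler convergence theory of \cite{ref:malanowski2020convergence}, in the grid-uniform ($\ell^{\infty}$) sense needed above; this is where the density of $\mathcal{U}'$ in the admissible class as $h\to0$ and the stated assumptions (II.1)--(II.6) are used, and it is the part that requires the most care. Everything else is bookkeeping: the quantifier order ($\Dd$, then $\rzero$, then $h$) is consistent because the truncation bound becomes $h$-free once $\mathcal{T}=\rzero h$ and the saturation bound never involved $h$, while the Lipschitz extension and the compactness of $\admcont$ supply the finite constants $L_q$, $\unifnorm{\cont\as(\cdot)}$, and $\sup_{v\in\admcont}\norm{v}$ used throughout.
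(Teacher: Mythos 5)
Your proof is correct and follows the paper's blueprint almost exactly: extend \(u^*(\cdot)\) by its boundary values to an \(L_0\)-Lipschitz function on \(\Rbb\), control the quasi-interpolation error by Theorem~\ref{thrm:Holder-Lipschitz-estimate} (picking \(\Dd\) to tame the saturation term), use the finite-sum truncation of Remark~\ref{rem:truncated_sum} to pick \(\rzero\), invoke the Malanowski Euler-discretization convergence at the grid nodes, and then shrink \(h\) last. The one genuine difference is your choice of intermediate comparator: the paper splits \(\unifnorm{(\MM u_E^*)(\cdot)-\ulamext(\cdot)}\) through the \emph{truncated interpolant of the continuous optimal control} \(u_E^\dagger(\cdot)\), and bounds the residual \(\unifnorm{u_E^\dagger(\cdot)-\ulamext(\cdot)}\) by \(\tfrac{\mathsf{K}_0(2\rzero+1)h}{\sqrt{\pi\Dd}}\) via counting the \(2\rzero+1\) summands; you split through the \emph{untruncated interpolant of the extended optimizer sequence} \((\MM\extdummyopt)(\cdot)\), and bound the resulting grid-comparison term (B) via the approximate partition of unity, obtaining \(2\sup_{g\in\Tgrid}\norm{u^*(g)-\dummyopt(g)}\). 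Your version has the small advantage that the grid-comparison bound does not involve \(\rzero\) at all, so the final \(h\)-constraint decouples from the truncation parameter, whereas the paper's explicit \(\mathsf{K}_0(2\rzero+1)\) factor (and its worked examples) make the dependence visible and numerically usable. Both versions place the same burden on the hypotheses --- that the Euler-discretized transcription of~\eqref{eq:OCP_finite} over \(\mathcal{U}'\) falls within the scope of the \(O(h)\) Malanowski estimate --- and, like the paper, you appeal to that input without filling in the verification, which is the only point at which either argument is not fully self-contained.
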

A proof of Theorem \ref{thrm:optimal estimate} is given below.
\begin{remark}
Theorem \ref{thrm:optimal estimate} asserts that a control trajectory approximating the optimal control trajectory within a preassigned uniform error can be found, and such a trajectory can be derived via the quasi-interpolation engine. The trajectory \(\ulam(\cdot)\) approximates \(u\as(\cdot)\) up to some error that is controllable to any desirable precision by appropriately choosing the triplet \((h,\Dd,\rzero)\). It is important to note that the approximation error is measured \emph{in the uniform sense}, and can be controlled via the parameters \((h,\Dd,\rzero)\in \loro{0}{+\infty}^3\). This is different from standard approximation theoretic results in the sense that this error need not go to zero and \(h \to 0\). Also note that Theorem \ref{thrm:optimal estimate} is of the \emph{challenge-answer} type: given the challenge of a preassigned uniform error margin \(\eps>0\), it is possible to find a triplet \((h,\Dd,\rzero) \in \loro{0}{+\infty}^3\) (the answer) such that the given error margin is respected. 
\end{remark}

\begin{remark}
We formulated the DMS algorithm in \S\ref{subsec:shooting} on a piecewise uniform grid \(\mathcal{G}\) in order to address localization of change points and subsequent refinement of the time grid around the localized regions (see \S\ref{subsec:detection}, and \S\ref{subsubsec:meshrf_scheme}). However, we provide the error estimate in Theorem \ref{thrm:optimal estimate} on a uniform grid. Similar estimates can be furnished for a piecewise uniform grid restricting attention to uniform patches and employing the estimates in \cite[Chapter 11]{ref:mazyabook} and leveraging the fact that the error relative to the uniform metric on disjoint patches is the maximum of the errors on the individual patches. Given a uniform estimate between the optimal solutions of \eqref{eq:OCP} (on \(\Lambda\)) and \eqref{eq:col_NLP}, our interpolation mechanism remains intact and furnishes uniform approximation guarantees as demonstrated by the proof of Theorem \ref{thrm:optimal estimate}. 
\end{remark}

\begin{remark}\label{rem:holder}
Theorem \ref{thrm:optimal estimate} can be extended to H\"{o}lder continuous optimal controls as well, if such regularity results are known for a given class of optimal control problems. For example, in the setting of the optimal control problem tackled in \cite{ref:vinter-shvartsman-2006regularity} the control constraint set is a time-dependent multifunction, and it was shown that \(t \mapsto \cont\as(t)\) admits \(\gamma\)-H\"{o}lder regularity (for \(\gamma \in \lorc{0}{1}\)) under additional constraint qualification conditions and further regularity assumptions on \(\admcont(t)\); we refer to \cite{ref:vinter-shvartsman-2006regularity} for more details on this topic. To deal with such a situation, one can employ an estimate such as \cite[Theorem 2.25]{ref:mazyabook} instead of the specialized Lipschitz estimate employed in Theorem \ref{thrm:optimal estimate}. 
\end{remark}

\begin{proof_n}
Let us define \(\Tgrid \Let \aset[]{mh \suchthat m\in \Z} \cap \lcrc{0}{T}\), and recall that we defined \(\unifgrid \Let \aset[]{mh \suchthat m\in \Z} \cap \Ball(t,\rzero h)\) for each \(t \in \Rbb\) in the theorem; the latter set contains all grid points of the form \(\aset[]{mh \suchthat m \in \Z}\) on the interval \(\lcrc{t -\rzero h}{t+ \rzero h} \teL \Ball(t,\rzero h)\). For any candidate control trajectory from \(\admcon'\) such as 
\begin{align}
   \lcrc{0}{T}\ni t \mapsto \bar{u}(t) \Let \frac{1}{\sqrt{\pi \Dd}} \sum_{g \in \unifgrid \cap \lcrc{0}{T}} \hspace{-4mm}\bar{u}(g)\;\epower{-|t-g|^2/\Dd h^2} \in \Rbb^{\dimcon},
\end{align} 
where \((\bar{u}(g))_{g \in \Tgrid}\) are the control coefficients as defined in \eqref{e:UF def}, we arrive at the NLP of the form \eqref{eq:col_NLP}. Subsequently, we define the approximate control trajectory 
\begin{align}
   \lcrc{0}{T} \ni t \mapsto \ulam(t) \Let \frac{1}{\sqrt{\pi\Dd}} \sum_{g \in \unifgrid}\extdummyopt(g) \; \epower{-|t-g|^2/\Dd h^2} \in \Rbb^{\dimcon},\nn
\end{align}
where \(\extdummyopt(\cdot)\) was defined in \eqref{eq:extended_optimizers}.

Fix \(\varepsilon>0\). From Proposition \ref{prop:opt_traj_exst} we know that \(u\as(\cdot)\) is Lipschitz continuous (with Lipschitz rank \(L_0\)). To apply the estimates in \S\ref{sec:appen_A} we need to extend \(\cont\as(\cdot)\) so that the extension is Lipschitz on \(\Rbb\). To this end, we define the extension
\begin{equation}\label{eq:extended_con}
\begin{aligned}
\Rbb \ni t \mapsto \cont_{E}\as(t) \Let\begin{cases}
\cont\as(0)~&\text{for}~t \in \loro{-\infty}{0},\\
\cont\as(t)~&\text{for} ~t \in \lcrc{0}{\horizon},\\
\cont\as(\horizon) &\text{for}~t \in \loro{\horizon}{+\infty}.
\end{cases}
\end{aligned}
\end{equation}
We claim that \(\cont_{E}\as(\cdot)\) is Lipschitz with the same rank \(L_0\) as \(\cont\as(\cdot)\). Indeed, by construction \(\norm{\cont_{E}\as(t_1)- \cont_{E}\as(t_2)}=0\) for \((t_1,t_2) \in \loro{-\infty}{0}^2\) or \((t_1,t_2) \in \loro{\horizon}{+\infty}^2\). Also, since \(\cont\as(\cdot)\) is Lipschitz continuous with Lipschitz rank \(L_0\), for \((t_1,t_2)\in \lcrc{0}{\horizon}^2\) we have the inequality \(\norm{\cont_{E}\as(t_1)- \cont_{E}\as(t_2)}= \norm{\cont\as(t_1)- \cont\as(t_2)} \leq L_0\abs{t_1-t_2}\) . When \(t_1 \in \loro{-\infty}{0}\) and \(t_2 \in \lcrc{0}{\horizon}\), we have
\begin{align}\label{eq:lipschitz_estimates1}
    \norm{\cont_{E}\as(t_1) - \cont_{E}\as(t_2)} \nn &\le \norm{\cont_{E}\as(t_1) - \cont\as(0)} + \norm{\cont\as(0) - \cont_{E}\as(t_2)} \nn\\& 
    \le \norm{\cont\as(0) - \cont\as(0)} + \norm{\cont\as(0) - \cont\as(t_2)} \nn \\&   
    \le L_0 t_2 \nn \\&     
    \le L_0\abs{t_2-t_1}.
\end{align}
Similarly, one can show \(\norm{\cont_{E}\as(t_1) - \cont_{E}\as(t_2)} \le L_0\abs{t_1-t_2}\) for \((t_1,t_2) \in \loro{-\infty}{0} \times \loro{T}{+\infty}\) and \((t_1,t_2) \in \lcrc{0}{T} \times \loro{T}{+\infty}\). Hence, \(\norm{\cont_{E}\as(t_1) - \cont_{E}\as(t_2)} \le L_0\abs{t_1-t_2}\) for all \((t_1,t_2)\in \Rbb \times \Rbb\), and the proof of our claim is complete.

We also define
\begin{equation}\label{eq:extended_approx_con}
\begin{aligned}
\Rbb \ni t \mapsto \ulamext(t) \Let\begin{cases}
\ulam(0)~&\text{for}~t \in \loro{-\infty}{0},\\
\ulam(t)~&\text{for} ~t \in \lcrc{0}{\horizon},\\
\ulam(T) &\text{for}~t \in \loro{\horizon}{+\infty}.
\end{cases}
\end{aligned}
\end{equation}
It trivially follows that
\begin{align}
    \unifnorm{u\as(\cdot) - \ulam(\cdot)} \le \unifnorm{u_E\as(\cdot) - \ulamext(\cdot)},\nn
\end{align}
and we claim that it is possible to pick \((h,\Dd,\rzero) \in \loro{0}{+\infty}^3\) such that \[\unifnorm{u_E\as(\cdot) - \ulamext(\cdot)} \le \eps,\] where \(\eps>0\) was fixed above. 

Since \(u_E\as(\cdot)\) is \(L_0\)-Lipschitz, from Theorem \ref{thrm:Holder-Lipschitz-estimate} the quasi-interpolation of \(\cont_{E}\as(\cdot)\) given by (recall the expansion formula \eqref{eq:gen_quasi_sdim})
\begin{align}\label{eq:u_hat}
  \Rbb \ni t \mapsto (\mathcal{M} u_E\as)(t) \Let \frac{1}{\sqrt{\pi\Dd}} \sum_{g \in \aset[]{mh \suchthat m\in \Z}}\cont_E\as(g) \; \epower{-|t-g|^2/\Dd h^2} \in \Rbb^{\dimcon}
\end{align}
satisfies
\begin{align}\label{eq:u_hat_estimate}
    \unifnorm{(\mathcal{M} u_E\as)(\cdot)- \cont_E\as(\cdot)}\leq L_0 c_{\gamma} h\sqrt{\Dd}+ \Delta_0(\genfn,\Dd),
\end{align} 
where the various quantities in the estimate \eqref{eq:u_hat_estimate} are:
\begin{itemize}
\item \(L_0\) is the Lipschitz rank of \(\cont\as(\cdot)\);
    
\item the Gaussian generating function satisfies moment condition with \(M=2\), and therefore \(c_{\gamma}=\tfrac{1}{3}\);

\item if \(\mathcal{F}\) is the Fourier transform operator on \(\Rbb\), then
    \begin{align}
    \mathcal{E}_0(\genfn,\Dd)(\cdot) \Let \sup_{t \in \Rbb}\sum_{\nu \in \Z\setminus \{0\}}(\mathcal{F} \genfn)\bigl(\sqrt{\Dd}\nu\bigr)\epower{2\pi i t \nu}\nn
\end{align}
can be made smaller than any preassigned \(\eps'>0\) by suitable choice of \(\Dd>0\) as given in \cite[Corollary 2.13]{ref:mazyabook}; and then \(\Delta_0(\genfn,\Dd) = \mathcal{E}_0(\genfn,\Dd)\unifnorm{u_E\as(\cdot)}\). 
\end{itemize}
From \cite[Corollary 2.13]{ref:mazyabook} we know that for the preassigned \(\eps>0\), we can find \(\overline{\Dd}>0\) such that whenever \(\Dd \ge \overline{\Dd}\) we have
\begin{align}\label{eq:E_0_term}
    \mathcal{E}_0(\genfn,\Dd) \le \frac{\eps}{\blah\unifnorm{u_E\as(\cdot)}}.
\end{align} 
We pick \(\Dd \ge \overline{\Dd}\) that ensures
\begin{align}\label{eq:Delta_0_term}
    \Delta_0(\genfn,\Dd) \le \frac{\eps}{\blah}.
\end{align} 
Now we fix some \(h>0\) satisfying
\begin{align}\label{eq:h_term_first}
    h \le   \frac{\eps}{\blah L_0c_{\gamma}\sqrt{\Dd}}, 
\end{align}
which guarantees that the first term on the right-hand side of \eqref{eq:u_hat_estimate} is dominated by \(\tfrac{\eps}{\blah}\). Define the finite-sum truncation \(u^{\dagger}(\cdot)\) of \((\MM u_E\as)(\cdot)\) given in \eqref{eq:u_hat} by (see Remark \eqref{rem:truncated_sum})
\begin{align}\label{eq:truncated_quasi_sdim}
  \Rbb \ni  t \mapsto u_E^{\dagger}(t) \Let \frac{1}{\sqrt{\pi \Dd}}\sum_{g \in \unifgrid}u_E\as(g)\,\epower{-|t-g|/\Dd h^2} \in \Rbb^{\dimcon}.
\end{align}
We now proceed to show that \(\ulamext (\cdot)\) defined in \eqref{eq:extended_approx_con} satisfies \(\unifnorm{u_E\as(\cdot) - \ulamext(\cdot)} \le \eps\) via several triangle inequalities:
\begin{align}\label{eq:part_estimate}
     \unifnorm{\cont_E\as(\cdot)-\ulamext(\cdot)} &\le  \unifnorm{\cont_E\as(\cdot)-(\mathcal{M}u_E\as)(\cdot)}+ \unifnorm{(\mathcal{M}u_E\as)(\cdot) - \ulamext(\cdot)} \nn \\& 
     \stackrel{\mathclap{(\dag)}}{\le}
     \frac{\eps}{\blah}+\frac{\eps}{\blah}+\unifnorm{(\mathcal{M}u_E\as)(\cdot) - \ulamext(\cdot)} \nn \\ & \le \frac{\eps}{2}+ \unifnorm{(\mathcal{M}u_E\as)(\cdot)-u_E^{\dagger}(\cdot)}+ \unifnorm{u_E^{\dagger}(\cdot) - \ulamext(\cdot)},
\end{align}
where the inequality \((\dag)\) follows from \eqref{eq:Delta_0_term} and our choice of \(h\) in \eqref{eq:h_term_first}. For our fixed \(\eps>0\) the parameter \(\rzero>0\) can be chosen (see \eqref{eq:truncated_bound_1} and \cite[\S 2.3.2]{ref:mazyabook}) to be
\begin{align}\label{eq:choosing_R}
\rzero \Let \sqrt{\Dd} \biggl( \frac{\eps}{\blah \mathcal{B} \unifnorm{u_E\as(\cdot)}} \biggr)^{1/(1-K)},\end{align}
where the constant \(\mathcal{B}>0\) depends on \(\genfn(\cdot)\) and the decay order \(K\) in \eqref{decay}. Thus, the second term on the right-hand side of \eqref{eq:part_estimate} satisfies
\begin{align}\label{eq:truncated_bound_1_proof}
    \unifnorm{(\mathcal{M}u_E\as)(\cdot) - u_E^{\dagger}(\cdot) } \le \frac{\eps}{\blah},
\end{align}
and the estimate \eqref{eq:part_estimate} becomes
\begin{align}\label{eq:part_estimate_ii}
\unifnorm{\cont_E\as(\cdot)-\ulamext(\cdot)} \le \frac{3\eps}{4} + \unifnorm{u_E^{\dagger}(\cdot) - \ulamext(\cdot)}.
\end{align}
We now look at the second term on the right-hand side of \eqref{eq:part_estimate_ii}: 
\begin{align}\label{eq:part_estimate_third}
     \unifnorm{u_E^{\dagger}(\cdot)-\ulamext(\cdot)} & = \sup_{t \in \Rbb}\|u_E^{\dagger}(t)- \ulamext(t)\|_{\infty} \nn \\& = \sup_{t \in \Rbb} \left\| \frac{1}{\sqrt{\pi \Dd}}\sum_{g \in \unifgrid} \bigl( u_E\as(g)-\extdummyopt(g) \bigr) \;\epower{-|t-g|^2/\Dd h^2}\right\|_{\infty} \nn \\ & \le \frac{1}{\sqrt{\pi \Dd}} \sup_{t \in \Rbb}\sum_{g \in \unifgrid} \|u_E\as(g)- \extdummyopt(g)\|_{\infty} \cdot \sup_{t \in \Rbb} \left|\epower{-|t-g|^2/\Dd h^2} \right| \nn \\&  \le
     \frac{1}{\sqrt{\pi \Dd}} \sup_{t \in \Rbb}\sum_{g \in \unifgrid} \|u_E\as(g)- \extdummyopt(g)\|_{\infty} \nn \\&
     \stackrel{\mathclap{(\dag)}}{\le} 
      \frac{\mathsf{K}_0 (2\rzero+1)h}{\sqrt{\pi\Dd}},
\end{align}
where in the inequality  \((\dag)\), \(\mathsf{K}_0\) is an absolute constant independent of \(h\) which arises due to the application of \cite[Theorem 5.7]{ref:malanowski2020convergence} in the final step of \eqref{eq:part_estimate_third} to bound \(\|u_E\as(g)- \extdummyopt(g)\|_{\infty}\).  

Finally, to ensure \(\unifnorm{u_E^{\dagger}(\cdot)-\ulamext(\cdot)} \le \frac{\eps}{\blah}\) we need
\begin{align}\label{eq:h_term_pre_final}
    h \le \frac{\sqrt{\pi\Dd}\eps}{\blah \mathsf{K}_0 (2\rzero+1)}.
\end{align}
Accordingly, we update our prior selection of \(h\) in \eqref{eq:h_term_first} to a new value satisfying
\begin{align}\label{eq:h_term_final}
 h \in \left]0, \mathrm{min} \left\{\frac{\sqrt{\pi\Dd}\eps}{\blah  \mathsf{K}_0 (2\rzero+1)}, \frac{\eps}{\blah L_0 c_{\gamma}\sqrt{\Dd}} \right\} \right[
\end{align}
in order to satisfy both \eqref{eq:h_term_first} and \eqref{eq:h_term_pre_final}.

In summary, observe that \(h\) was picked in \eqref{eq:h_term_final}, \(\Dd\) was picked just above \eqref{eq:Delta_0_term}, and \(\rzero\) was picked in \eqref{eq:choosing_R} in order to satisfy our required bound. Our proof is complete. \qed
\end{proof_n}

\begin{remark}
In Theorem \ref{thrm:optimal estimate}, the function \(\genfn(\cdot) \in \mathcal{S}(\Rbb)\) can be picked arbitrarily provided it satisfies the conditions in \eqref{eq:moment} and \eqref{decay}, while ensuring that the estimate \eqref{eq:unif_estimate_lip} remains intact. Our choice of the Gaussian generating function is merely for simplicity of presentation. See \S\ref{sec:illustration_unif} for an illustration of how Theorem \ref{thrm:optimal estimate} can be applied.
\end{remark}

\begin{remark}
Note that \cite[Theorem 5.7]{ref:malanowski2020convergence} was lifted off the shelf in the preceding proof; any other uniform estimate with a convergence rate depending on \(\mathcal{O}(h^p)\) for some \(p>0\) can be employed instead. For example \cite[Theorem 2.1]{ref:ALD:WWH:EulerAppOptCon} gives an uniform estimate of order \(\mathcal{O}(h^{2/3})\) for state constrained optimal control problems under a somewhat parallel set of assumptions and that estimate under their hypotheses can be applied in the corresponding context; see the smoothness conditions in \cite[\S2, pp. 176]{ref:ALD:WWH:EulerAppOptCon}. But for Lipschitz continuous \(u\as(\cdot)\), using our approach, the convergence rate cannot be further improved. However, if \(u\as(\cdot)\) is more regular,  for instance, if \(u\as(\cdot) \in \mathcal{C}^2([0,T];\Ubb)\), one can derive the following estimate instead of the estimate in \eqref{eq:u_hat_estimate}:
\begin{align}\label{eq:u_hat_estimate}
    \unifnorm{(\mathcal{M} u_E\as)(\cdot)- \cont_E\as(\cdot)}\leq\mathcal{O}(h^2) + \Delta_0(\genfn,\Dd); \nn 
\end{align}
see \cite[Theorem 2.17, pp. 34]{ref:mazyabook}. For such OCPs, employing higher-order discretization schemes, such as RK4 discretization \cite{ref:hager_runge_2000_1,ref:hager_runge_2000_2,ref:hager_runge_dontchev2000second}, can lead to uniform convergence of order \(\mathcal{O}(h^2)\), up to a controllable saturation error.
\end{remark}

\begin{remark}\label{rem:on_triangle_ineq}
The extension \(u_E\as(\cdot)\) of the optimal control trajectory \(u\as(\cdot)\) and \(\ulamext(\cdot)\) of the approximating trajectory \(\ulam(\cdot)\) have the same domain \(\Rbb\) and we can apply various uniform error estimates recorded in \S\ref{sec:appen_A} directly. In particular, the ability to furnish uniform estimates under the finite-sum truncation of the infinite quasi-interpolant is noteworthy (see Remark \ref{rem:truncated_sum}) and was crucial for us. This feature helps us to arrive at the estimate \eqref{eq:part_estimate_third}, with the truncation parameter \(\rzero\) capturing the number of terms in the sum, in the absence of which it will be difficult to bound all the terms in \eqref{eq:part_estimate_third}.
\end{remark}

The following result is specifically designed for linear optimal control problems.
\begin{corollary}\label{thrm:lin optimal estimate}
Consider the linear OCP
\begin{equation}
	\label{eq:Lin_OCP}
\begin{aligned}
& \inf_{\cont(\cdot) }	&& \hspace{-3mm}\inprod{x(\horizon)}{P x(\horizon)} + \int_{\tinit}^{\horizon}\hspace{-1mm} \bigl(\inprod{x(t)}{Q x(t)}+ \inprod{u(t)}{P u(t)} \bigr) \odif{t} \\
&  \sbjto		&&  \hspace{-3mm}\begin{cases}
\dot \st(t) = Ax(t)+Bu(t)\,\,\text{for a.e.\ } t \in \lcrc{0}{T},\,x_0=\overline{x},\\
\stconset(x(t)) \le 0\text{ and } \contconset(u(t))\le 0 \text{ for all } t \in \lcrc{0}{T},\\
\st(\horizon) \in \mathsf{C} \subset \Rbb^d,
\end{cases}
\end{aligned}
\end{equation}
where \(A \in \Rbb^{d \times d}\), \(B \in \Rbb^{d \times \dimcon}\), \(Q \in \Rbb^{d \times d}\) and \(P \in \Rbb^{d \times d}\) are positive semidefinite, \(R \in \Rbb^{\dimcon \times \dimcon}\) is positive definite, and \(\mathsf{C}\) is is compact and convex set. Let the pair \(\bigl(\st\as(\cdot),\cont\as(\cdot)\bigr)\) be a normal extremal for problem \eqref{eq:Lin_OCP}. Suppose that the functions \(\stconset(\cdot)\) and \(\contconset(\cdot)\) are twice continuously differentiable and convex, and that \(\xi \mapsto \nabla_{\xi} \stconset(\xi)\) is also twice continuously differentiable. Moreover, suppose that there exists \(\alpha \in \loro{0}{+\infty}\) such that for all \(t \in \lcrc{0}{\horizon}\) and all \(\overline{y}\), the constraint qualification condition 
\[
\norm{\bigl(V_c(t)^{\top}, B^{\top}V_s(t)^{\top}\bigr) \overline{y}} \geq \alpha \norm{\overline{y}}
\]
holds, where \(V_c(t)\) is the matrix whose rows are the gradients of the components of \(\contconset(\cdot)\) evaluated at \(u\as(t)\), and \(V_s(t)\) is defined similarly for \(\stconset(\cdot)\). Then, under the data \ref{thrm:data:i}--\ref{thrm:data:iii} in Theorem \ref{thrm:optimal estimate} (with obvious modifications), for every uniform error margin \(\eps>0\), one can choose a triplet \((h,\Dd,\rzero) \in \loro{0}{+\infty}^3\) such that
\begin{equation}
	\label{eq:optimal_estimate_final_linear}
    \unifnorm{\cont\as(\cdot)-\ulam(\cdot)} \le \eps. 
\end{equation}
\end{corollary}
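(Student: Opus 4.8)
The plan is to recognize the linear-quadratic problem \eqref{eq:Lin_OCP} as a special instance of the control-affine problem \eqref{eq:OCP}, to verify that it satisfies the hypotheses invoked in Theorem \ref{thrm:optimal estimate}, and then to replay the proof of Theorem \ref{thrm:optimal estimate} essentially verbatim; the only genuine work lies in the hypothesis check. First I would make the identifications \(f(\dummyx)\Let A\dummyx\), \(G(\dummyx)\Let B\), \(\rcost(\dummyx,\dummyu)\Let\inprod{\dummyx}{Q\dummyx}+\inprod{\dummyu}{R\dummyu}\), and \(\fcost(\dummyx)\Let\inprod{\dummyx}{P\dummyx}\), fold the pure-state constraint \(\stconset(\cdot)\le 0\) and the pure-control constraint \(\contconset(\cdot)\le 0\) into the family \((h_j)_j\) of \eqref{eq:OCP}, and take \(\admcont\) to be any sufficiently large box containing the admissible control values (legitimate because \(\cont\as(\cdot)\), being Lipschitz on a compact interval, is bounded). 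With this dictionary the standing data \ref{OCPdata1}--\ref{OCPdata4} hold: \(f(\cdot)\) is globally Lipschitz with linear growth, \(G(\cdot)\) is constant, \(\rcost(\cdot,\cdot)\) is nonnegative (as \(Q\succeq 0\), \(R\succ 0\)), continuous, convex, locally Lipschitz in \(\dummyx\), and coercive in \(\dummyu\) with exponent \(\overline{r}=2\) and \(\overline{\alpha}=\lambda_{\min}(R)>0\); the superlinear growth of the quadratic running and terminal costs is harmless because feasible state trajectories are a priori bounded, and the data may be modified outside a compact set without disturbing the extremal \(\bigl(\st\as(\cdot),\cont\as(\cdot)\bigr)\). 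Likewise \ref{OCPdata_new_1}--\ref{OCPdata_new_3} hold: \(f,G,\rcost,\fcost\) are polynomials, hence \(\mathcal{C}^\infty\); \(\dummyu\mapsto\rcost(\dummyx,\dummyu)\) is quadratic with Hessian \(2R\succ 0\), hence uniformly strongly convex; and \(\dummyu\mapsto h_j(\dummyx,\dummyu)\) is twice continuously differentiable and convex by the assumed regularity of \(\contconset(\cdot)\) together with the fact that the state-constraint components are \(\dummyu\)-independent, the Lipschitz-in-\(\dummyx\)-uniformly-in-\(\dummyu\) requirements being then immediate.

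Next I would treat the constraint qualification and the regularity of \(\cont\as(\cdot)\). The inequality \(\norm{\bigl(V_c(t)^{\top},\,B^{\top}V_s(t)^{\top}\bigr)\overline{y}}\ge\alpha\norm{\overline{y}}\) posited in the statement is exactly the uniform linear-independence condition that the state-and-control-constrained linear-quadratic theory needs; it is the analogue of \ref{OCPdata_new_4}, with the \(B^{\top}V_s\) block appearing precisely because the pure state constraints are of first order, so that the controllable part of \(\tfrac{d}{dt}\stconset(\st(t))\) is \(\nabla\stconset(\st(t))^{\top}B\,\cont(t)\). Under this qualification, together with the twice continuous differentiability and convexity of \(\stconset(\cdot)\), \(\contconset(\cdot)\) and of \(\xi\mapsto\nabla_{\xi}\stconset(\xi)\) and with \(R\succ 0\), Lipschitz continuity of \(\lcrc{0}{T}\ni t\mapsto\cont\as(t)\) (say of rank \(L_0\)) follows from the regularity theory for such problems --- in the same spirit as Proposition \ref{prop:opt_traj_exst}, but using the state-constrained counterparts, for instance along the lines of \cite{ref:vinter-shvartsman-2006regularity} or as part of the hypotheses that I verify next.

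I would then check assumptions (II.1)--(II.6) of \cite{ref:malanowski2020convergence}: the linear dynamics and the \(\mathcal{C}^2\), convex constraint functions supply the structural smoothness; \(R\succ 0\) together with \(Q,P\succeq 0\) supplies the coercivity/second-order sufficient optimality condition on the critical cone; and the displayed inequality is their uniform-independence constraint qualification. Hence \cite[Theorem 5.7]{ref:malanowski2020convergence} applies to the Euler-discretized NLP built in \S\ref{subsec:shooting} and yields an absolute constant \(\mathsf{K}_0\), independent of \(h\), with \(\norm{\cont\as(g)-\dummyopt(g)}_{\infty}\le\mathsf{K}_0 h\) at every \(g\in\Tgrid\), and, after the extension \eqref{eq:extended_optimizers}, the same bound for \(\extdummyopt(\cdot)\) against the Lipschitz extension of \(\cont\as(\cdot)\). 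From here the argument is a transcription of the proof of Theorem \ref{thrm:optimal estimate}: extend \(\cont\as(\cdot)\) Lipschitzly to \(\Rbb\), apply the uniform Lipschitz estimate of Theorem \ref{thrm:Holder-Lipschitz-estimate} to the Gaussian quasi-interpolant (so \(M=2\) and \(c_{\gamma}=\tfrac13\)), control the finite-sum truncation via Remark \ref{rem:truncated_sum}, chain the three triangle inequalities of that proof with \(\mathsf{K}_0 h\) substituted for \(\norm{\cont\as(g)-\extdummyopt(g)}_{\infty}\), and, given \(\eps>0\), choose \(\Dd\), then \(\rzero\), then \(h\), exactly as in \eqref{eq:Delta_0_term}, \eqref{eq:choosing_R} and \eqref{eq:h_term_final}, to obtain \(\unifnorm{\cont\as(\cdot)-\ulam(\cdot)}\le\eps\).

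I expect the principal obstacle to be the third step: confirming that a mixed state/control-constrained linear-quadratic problem genuinely falls under the hypotheses of \cite{ref:malanowski2020convergence} --- in particular the second-order sufficient condition on the critical cone and the correct, first-order, treatment of the pure state constraints, which is exactly why the constraint qualification in the statement involves the \(B^{\top}V_s\) block rather than \(V_s\) alone. By comparison, the bookkeeping in the reduction step (absorbing the path constraints, fixing a box \(\admcont\), and disposing of the superlinear growth of the quadratic costs) is routine, and the closing interpolation step is essentially a copy of the proof of Theorem \ref{thrm:optimal estimate}.
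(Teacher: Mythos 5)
Your overall strategy --- establish Lipschitz regularity of \(\cont\as(\cdot)\), verify the standing assumptions and the hypotheses of \cite{ref:malanowski2020convergence}, then replay the proof of Theorem~\ref{thrm:optimal estimate} --- is exactly what the paper does; the interpolation/truncation/triangle-inequality chain you describe is identical.

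The one place where you diverge, and where your argument is not yet on firm ground, is the source of the Lipschitz regularity of \(\cont\as(\cdot)\). You propose to reach it ``in the same spirit as Proposition~\ref{prop:opt_traj_exst}, but using the state-constrained counterparts, for instance along the lines of \cite{ref:vinter-shvartsman-2006regularity}.'' However, Proposition~\ref{prop:opt_traj_exst} rests on \cite{ref:depinho-2011lipschitz}, a \emph{mixed} state-control constraint framework whose regularity mechanism does not transfer to \emph{pure} state constraints \(\stconset(x(t))\le 0\): for pure state constraints the multipliers are measures, and Lipschitz continuity of the optimal control requires a distinct set of hypotheses --- in particular the second-order derivative condition on \(\nabla_{\xi}\stconset(\xi)\) and the constraint qualification with the \(B^{\top}V_s(t)\) block, both of which appear in the corollary precisely because they match the LQ state-constrained regularity results the paper relies on, namely \cite[Theorem~4.2]{ref:hager1979lipschitz} and \cite[Theorem~3.1]{ref:vinter-galbraith-lipschitz}. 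Your reading of the constraint qualification (controllable part of \(\tfrac{d}{dt}\stconset(x(t))\)) is the right intuition, but the theorem you point to cannot deliver the conclusion; you need to cite the state-constrained LQ regularity theory directly rather than try to reduce it to the mixed-constraint machinery. Once that citation is in place, the remainder of your argument --- verifying coercivity from \(R\succ 0\), the smoothness of the polynomial data, the Malanowski hypotheses, and the final interpolation chain --- goes through as the paper intends.
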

\begin{proof}
The existence of \(u\as(\cdot)\) and Lipschitz continuity of \(t \mapsto u\as(t)\) follow directly from \cite[Theorem 4.2]{ref:hager1979lipschitz}, \cite[Theorem 3.1]{ref:vinter-galbraith-lipschitz}. Subsequently, the estimate \eqref{eq:optimal_estimate_final_linear} follows immediately by applying the same steps as in Theorem \ref{thrm:optimal estimate}.
\end{proof}

\subsection{A couple of illustrations on how to employ Theorem \ref{thrm:optimal estimate}}\label{sec:illustration_unif}
Using an example, we demonstrate how to apply the technique established in Theorem \ref{thrm:optimal estimate} to achieve a preassigned uniform error margin by selecting appropriate approximation parameters.
\subsection*{The Aly-Chan problem}
Let us consider the benchmark \emph{Aly-Chan} optimal control problem given by: 
\begin{equation}
\label{eq:aly-chan-ocp}
\begin{aligned}
& \minimize_{\cont(\cdot)}	&&  x_3 \left( \frac{\pi}{2}\right) \\
&  \sbjto		&&  \begin{cases}
\dot{\st}_1(t) =x_2(t),\, \dot{\st}_2(t)=u(t),\\  \dot{\st}_3(t)=\frac{1}{2}\left(x_2(t)^2-x_1(t)^2\right),\\ 
\st_1(0)=0, \,\st_2(0)=1, \,\,\text{and}\,\,\st_3(0)=0,\\
|u(t)|\le 1. 
\end{cases}
\end{aligned}
\end{equation}
Note that the problem data satisfies the hypothesis of Theorem \ref{thrm:optimal estimate}. Indeed, the analytical optimal control trajectory is known and it is given by \(t \mapsto u\as(t) \Let -\sin(t)\), which is Lipschitz continuous with Lipschitz rank \(L_0=1\). 

Fix \(\eps = 0.01\) and employ the Gaussian generating function \(t \mapsto \genfn(t) \Let \frac{1}{\sqrt{\pi}} \epower{-|t|^2}\). Note that, \(\genfn(\cdot)\) satisfies the moment condition with order \(M=2\); and thus \(c_{\gamma}=\tfrac{1}{3}\). We will pick the pair \((h,\Dd, \rzero) \in \loro{0}{+\infty}^3\) such that \( \unifnorm{\cont\as(\cdot)-\ulam(\cdot)} \le 0.01\). As \(u\as(\cdot)\) is Lipschitz, we have the straight forward estimate 
\begin{align}\label{num:aly_chan_Lip_estimate}
     \unifnorm{(\mathcal{M} u_E\as)(\cdot)- \cont_E\as(\cdot)}\leq  \tfrac{h}{3}\sqrt{\Dd} +\Delta_0(\genfn,\Dd).
\end{align}
The saturation error \(\Delta_0(\genfn,\Dd) \Let \mathcal{E}_0(\genfn,\Dd)\unifnorm{u_E\as(\cdot)}\), in which the term
\begin{align}\label{num:AC_E_0_term}
    \mathcal{E}_0(\genfn,\Dd) = \sup_{t \in \Rbb}\sum_{\nu\in \Z\setminus\{0\}}\bigl|\mathcal{F}\genfn\bigl(\sqrt{\Dd}\nu\bigr)e^{2\pi i t \nu}\bigr|= \tfrac{1}{\sqrt{\Dd}}\sum_{\nu \in \Z \setminus\{0\}}\bigl| \epower{-\pi^2 \Dd \nu^2}\bigr|,\nn
\end{align}
which admits a numerical value of \(5.35\times 10^{-9}\) when \(\Dd=\overline{\Dd}=2\) \cite[Chapter 3]{ref:mazyabook} and as per \eqref{eq:Delta_0_term}, and thus \(\Delta_0(\genfn,\Dd) \le \tfrac{\eps}{4}\) is respected. For the fixed \(\Dd=2\), we pick the discretization parameter \(h=\frac{\eps}{4L_0 c_{\gamma}\sqrt{2}}=0.0054\); consequently, the estimate in \eqref{eq:u_hat_estimate} holds. Let us refer to \(h\) as \(h_{\text{init}}\) for now.
\begin{figure}[h!]
\centerline{\includegraphics[scale=0.6]{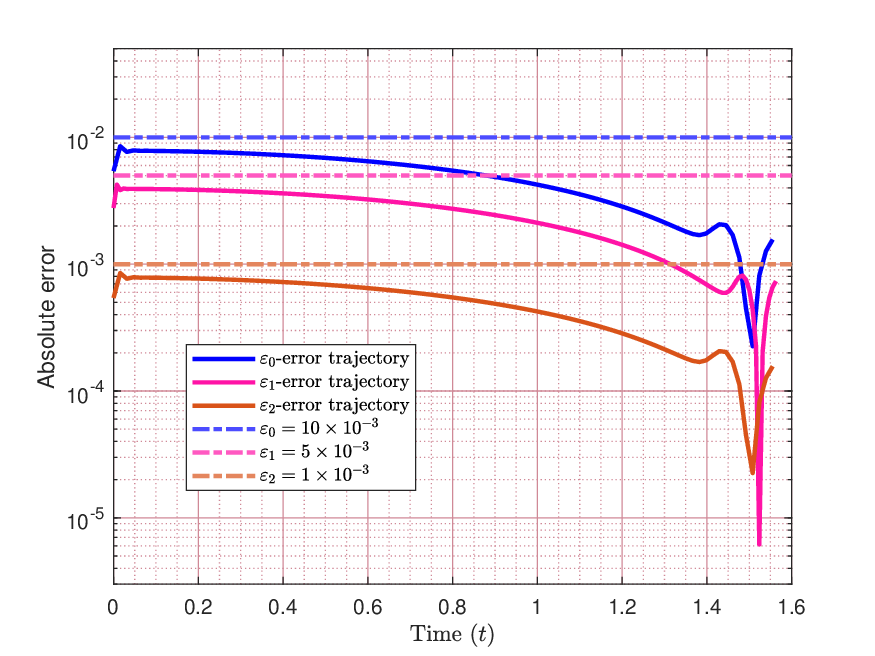}}
\caption{Error trajectory for different thresholds \(\eps\) for the Aly-Chan problem.}
\label{fig:Alychan_error}
\end{figure}
\begin{figure}[!ht]
  \centering
  \begin{subfigure}[b]{0.49\linewidth}
    \includegraphics[width=\linewidth]{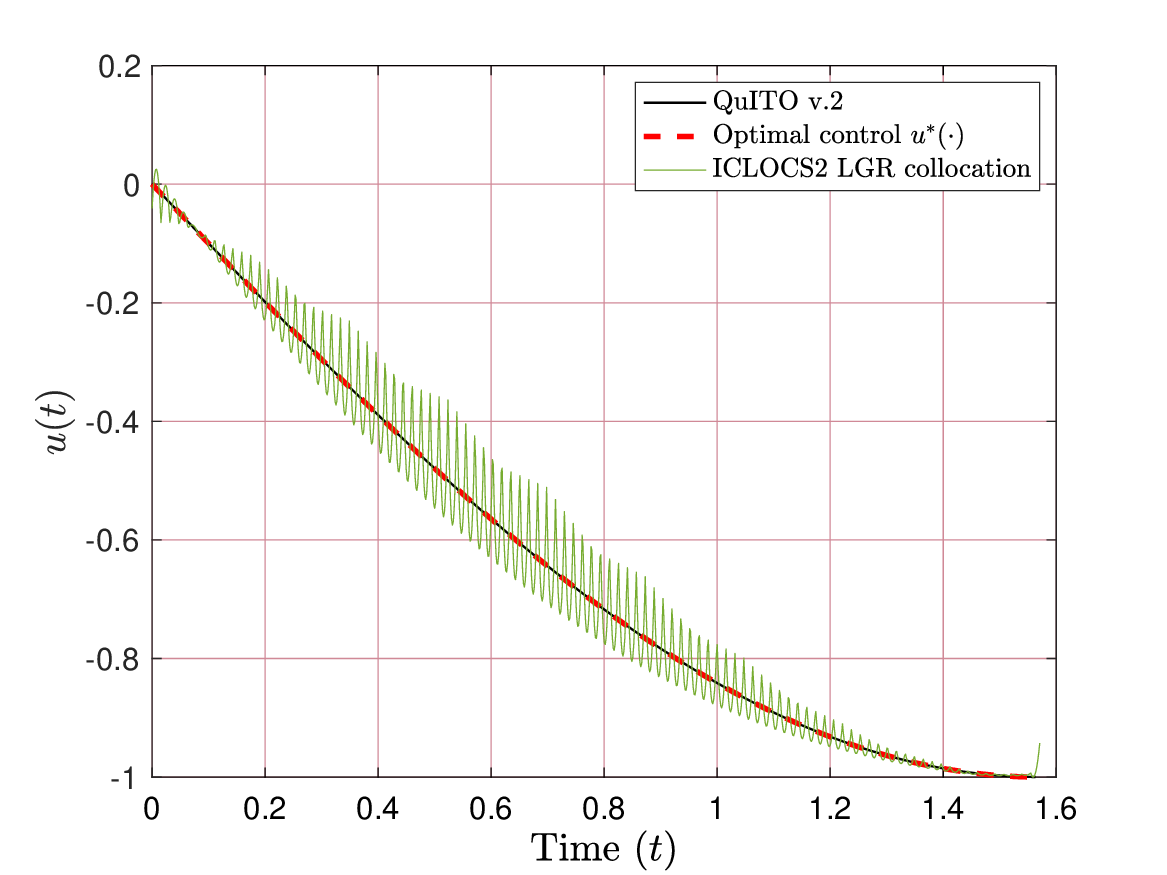}
  \end{subfigure}
  \begin{subfigure}[b]{0.49\linewidth}
    \includegraphics[width=\linewidth]{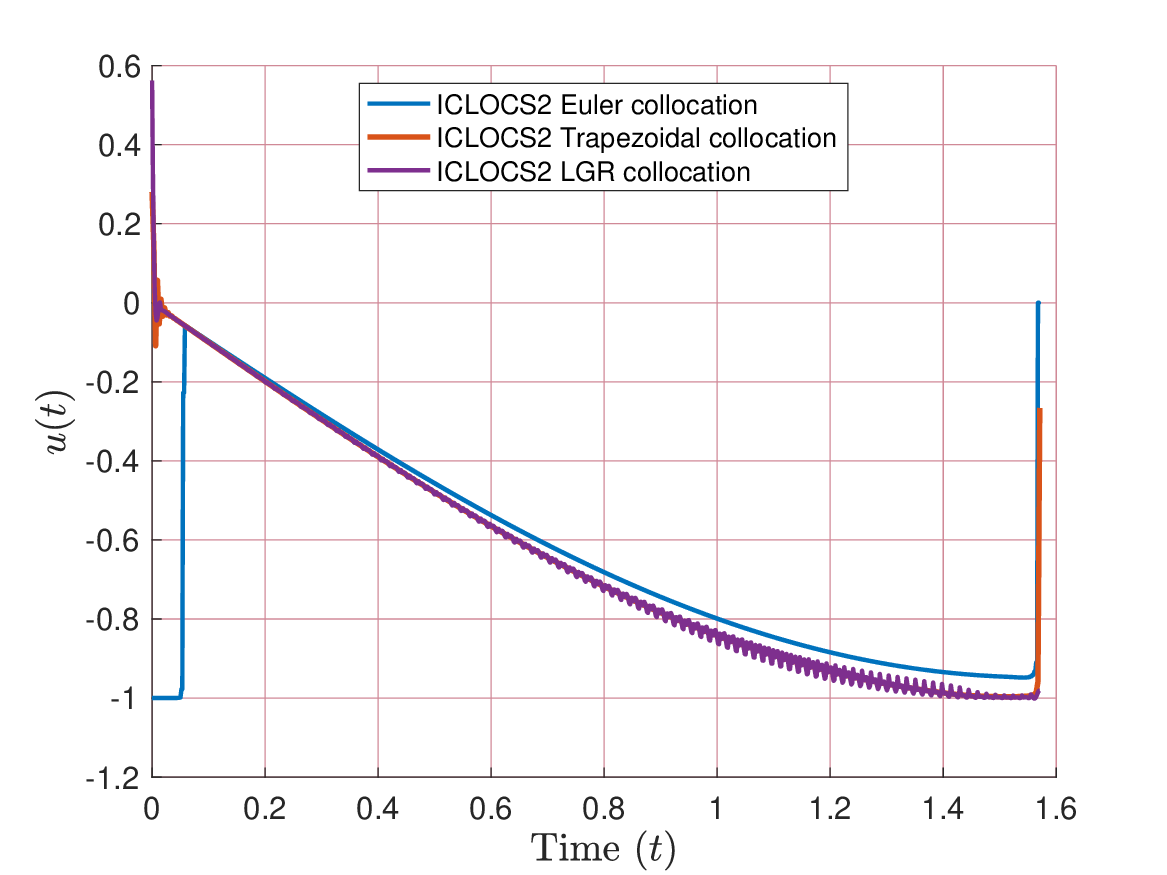}
  \end{subfigure}
 \caption{The left-hand subfigure shows numerical control trajectories obtained via ICLOCS2 by employing LGR collocation using Legendre polynomials for state and control representation for the Aly-Chan problem. The right-hand subfigure depicts Euler, Trapezoidal, and Hermite-Simpson collocation using piecewise cubic hermite interpolating polynomial for state and control representation for the same problem.
}
 \label{fig:aly_control}
\end{figure}
Following \cite[\S 1.2.1]{ref:mazyabook} we pick \(\rzero = 5\) and thus we have the direct analog of estimate in \eqref{eq:part_estimate_ii}. Finally, to update \(h\) as per \eqref{eq:h_term_final} we need a numerical value of the constant \(\mathsf{K}_0\). To this end, employ the same technique as \cite[Eq. 6.17 and 6.18]{ref:malanowski2020convergence} --- we used the parameters \(\bigl(h_{\text{init}},\Dd,\rzero\bigr)\) in the quasi-interpolation engine and solved the associated NLP. We observed that \(\norm{u\as_E(g) - \extdummyopt(g)}_{\infty}\) is \(0.0083\); consequently \(\mathsf{K}_0 = \tfrac{\norm{u\as_E(g) - \extdummyopt(g)}_{\infty}}{h_{\text{init}}} = \tfrac{0.0083}{0.0054} = 1.53.\) 
Then, the term on the right-hand side of \eqref{eq:h_term_pre_final} is \(\tfrac{\sqrt{\pi \Dd} \eps}{4\mathsf{K}_0 (2 \rzero+1)}=0.0038\). Finally, we update \(h_{\text{init}}\) with \(h =  \mathrm{min} \left\{0.0054,0.0038\right\} = 0.0038\). We have depicted the error trajectories for \(\eps \in \aset[]{0.01,0.005,0.001}\) in Figure \ref{fig:Alychan_error} where the vertical axis is in log scale for better visualization. The control trajectories are illustrated in Figure \ref{fig:aly_control}, which clearly demonstrates the presence of ringing phenomena in higher-order collocation techniques. In contrast, the numerical trajectories produced by \(\quito\) are accurate and free from ringing.


\subsection*{The Bryson-Denham problem} We provide illustration for another example. Consider the benchmark path constrained Bryson-Denham optimal control problem \cite{ref:Bry75} given by:
\begin{equation}
\label{eq:Bryson_OCP}
\begin{aligned}
& \minimize_{\cont(\cdot)}	&&  \int_{0}^{1} \cont(t)^2 \, \dd t \\
&  \sbjto		&&  \begin{cases}
\dot{x}_1(t)=x_2(t),\,\,\dot{x}_2(t)=u(t),\\
\st(0)=(0,1)^{\top},\, \st(1)\Let (0,-1)^{\top},\\
\st_1(t)\le l \Let \frac{1}{9} \quad\text{for all}\,\,t \in [0,1].
\end{cases}
\end{aligned}
\end{equation}
Note that the analytical optimal control trajectory is known and it is given by \cite{ref:Bry75}
\begin{equation}
\label{eq:Bry_analytic_u} 
 \begin{aligned}
 u^{*}(t) \Let \begin{cases}
  -\frac{2}{3l}\left( 1-\frac{t}{3l}\right)~&\text{if}\quad 0\le t \le 3l,\\
 0~&\text{if}\quad 3l \le t \le 1-3l,\\
 -\frac{2}{3l}\left( 1-\frac{1-t}{3l}\right) &\text{if}\quad 1-3l \le t \le 1, \nn
 \end{cases}
 \end{aligned}
 \end{equation}
which is Lipschitz continuous with Lipschitz rank \(L_0=18\). 

As before, fix \(\eps = 0.05\) and for illustration, we employ a higher-order Lagguerre-Gaussian generating function 
\begin{align}
    t \mapsto \genfn(t) \Let \frac{1}{\sqrt{\pi}} \left(\frac{315}{128} - \frac{105}{16}t^2 + \frac{63}{16}t^4 - \frac{3}{4}t^6 + \frac{1}{24}t^8 \right) \epower{-|t|^2}
\end{align}
which satisfies satisfies the moment condition with order \(M=10\); and thus \(c_{\gamma}=\tfrac{1}{11}\). Similarly to the Aly-Chan example, we have the estimate
\begin{align}\label{num:bd_chan_Lip_estimate}
     \unifnorm{(\mathcal{M} u_E\as)(\cdot)- \cont_E\as(\cdot)}\leq  \tfrac{18}{11}h\sqrt{\Dd} +\Delta_0(\genfn,\Dd).
\end{align}
\begin{figure}[h]
\centerline{\includegraphics[scale=0.6]{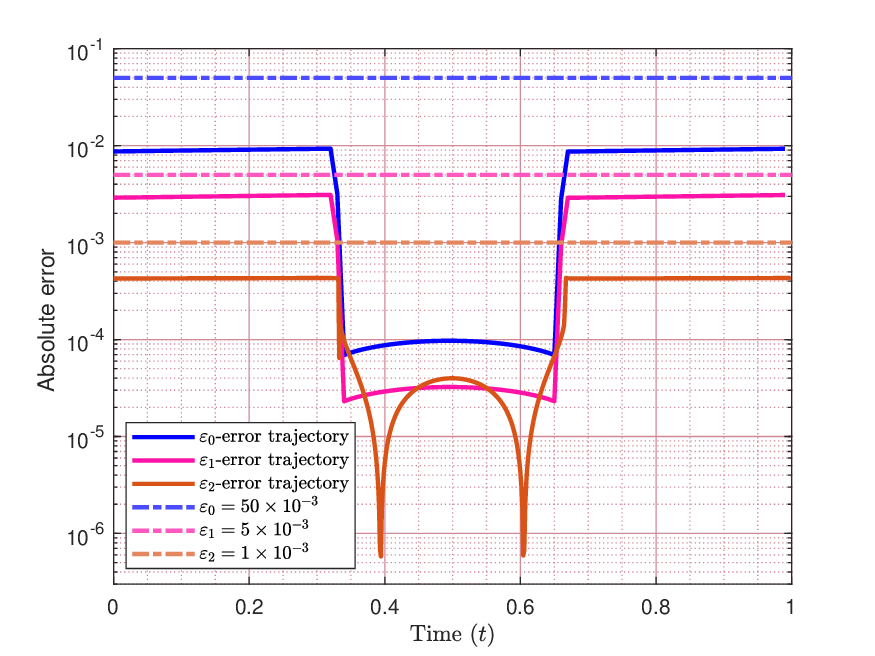}}
\caption{Error trajectory for different thresholds \(\eps\) for the Bryson-Denham problem. }
\label{fig:brysondenham_error}
\end{figure}
\begin{figure}[h]
\centerline{\includegraphics[scale=0.45]{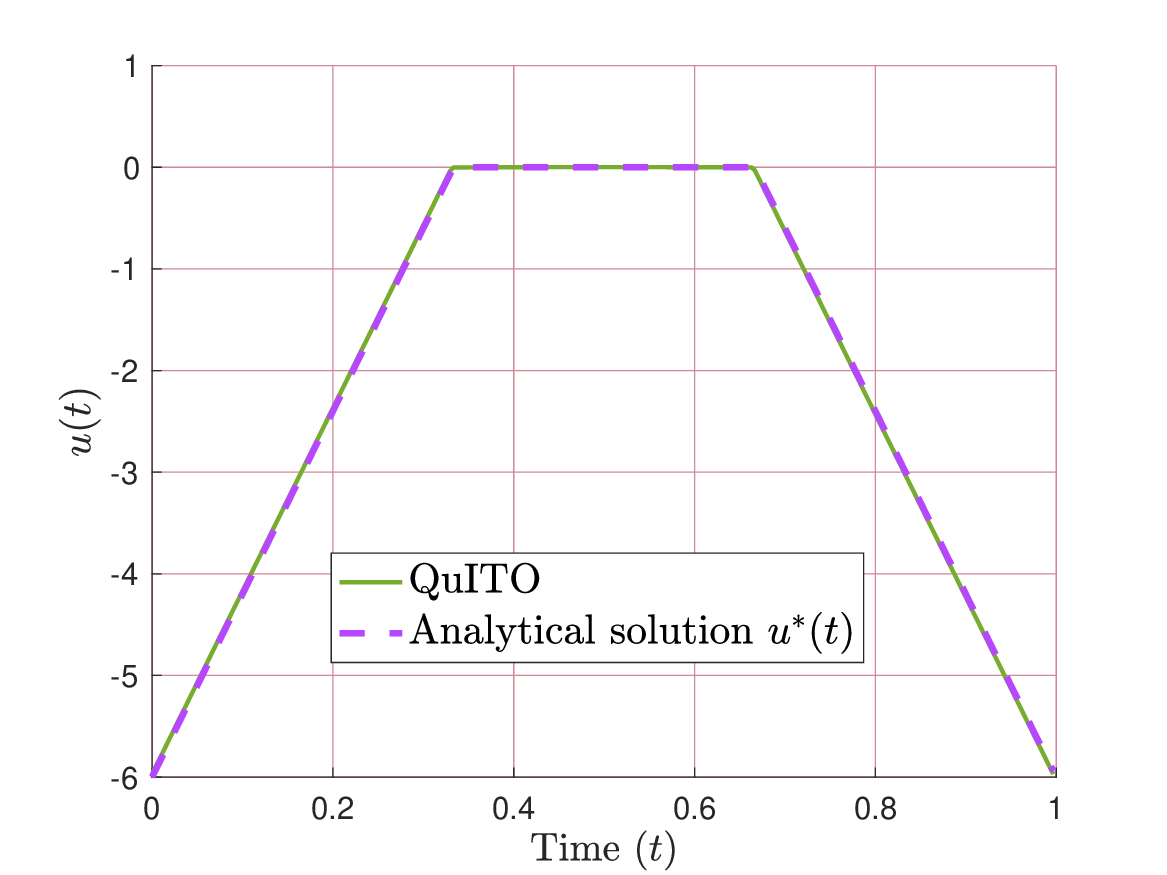}}
\caption{The analytical solution \(u\as(\cdot)\) and the numerical control trajectory obtained by \(\quito\).}
\label{fig:brysondenham_control}
\end{figure}
due to Lipschitz continuity of \(u\as(\cdot)\). Fixing \(\Dd = 2\) as per \cite[Chapter 3]{ref:mazyabook}, the term \( \mathcal{E}_0(\genfn,\Dd)\) evaluates to \(8.29\times 10^{-5}\) when \(\Dd=\overline{\Dd}=2\) and as per \eqref{eq:Delta_0_term}, and thus \(\Delta_0(\genfn,\Dd) \le \tfrac{\eps}{4}\). We pick the discretization parameter \(h_{\text{init}}=\frac{\eps}{4L_0 c_{\gamma}\sqrt{2}}=0.005\); consequently, the estimate in \eqref{eq:u_hat_estimate} holds. Following \cite[\S 1.2.1]{ref:mazyabook} we pick \(\rzero = 5\) and thus we have the direct analog of estimate in \eqref{eq:part_estimate_ii}. As earlier, we used the parameters \(\bigl(h_{\text{init}},\Dd,\rzero\bigr)\) in the quasi-interpolation engine and observed that \(\norm{u\as_E(g) - \extdummyopt(g)}_{\infty} = 0.0091\). This gives us \(\mathsf{K}_0 = \tfrac{\norm{u\as_E(g) - \extdummyopt(g)}_{\infty}}{0.005}=1.82\), and consequently the term on the right-hand side of \eqref{eq:h_term_pre_final} evaluates to \(0.001\). Finally, we update \(h_{\text{init}}\) with \(h =  \mathrm{min} \left\{ 0.005,0.001\right\} = 0.001\). Note that this value of \(h\) is conservative is the same sense as described in Remark \ref{rem:on_conservatism}. We have depicted the error trajectories for \(\eps \in \aset[]{0.05,0.005,0.001}\) in Figure \ref{fig:brysondenham_error} where the vertical axis is in log scale for better visualization. The numerical and the analytical control trajectories are depicted in \ref{fig:brysondenham_control}.

\begin{remark}\label{rem:on_conservatism}
Notice that the value of \(h\) furnished by our result is conservative due to very nature of our proof technique which involves bounding the uniform error via iterative application of triangle inequalities. Moreover, some conservatism is also introduced due to the application of \cite[Theorem 5.7]{ref:malanowski2020convergence} in estimate \eqref{eq:part_estimate_third}. One can observe that, for both the examples, with \(h_{\text{init}}\), the prespecified tolerance \(\eps\) was met.
\end{remark}

\subsection{Change point localization algorithm}\label{subsec:detection}
Although we have primarily addressed optimal control problems for which the optimal control trajectories are Lipschitz continuous (see Proposition \ref{prop:opt_traj_exst}), it is well-known that from a numerical viewpoint, the efficacy of mesh refinement algorithms are best illustrated for problems where the control trajectories are more irregular. With this practical need in mind we developed a mesh refinement algorithm targeting classes of optimal control problems that feature \emph{change points} involving kinks, narrow peaks, valleys over certain epochs, and even discontinuities. To this end, we first establish a localization algorithm that detects any abrupt change points in the control profile and acts as an input oracle to the mesh refinement algorithm that adds points locally (based on the output of the localization algorithm) to the time grid on top of an initial fixed, preferably coarse, uniform grid, and subsequently solves the ensuing optimal control on a denser and piecewise uniform grid via parameterizing the control trajectory by the quasi-interpolant \eqref{eq:Mhu_piecewise_grid}. Notice that the use of the quasi-interpolant \eqref{eq:Mhu_piecewise_grid} is natural in the context of mesh refinement because the scheme accommodates uniform approximation on piecewise uniform grids, which arises naturally in our context. 

We need some essential elements from the theory of wavelets: Given a (sufficiently regular) \emph{mother wavelet} \(\genwave:\Rbb \lra \Rbb\), the \emph{wavelet transform} decomposes a given function in a certain class over translated and dilated wavelets generated from the mother wavelet. Translation helps in localization and scaling allows a closer look at the signal at multiple scales. A \emph{mother wavelet} \(\Rbb \ni t \mapsto \genwave(t) \in \Rbb\) is a measurable function with finite energy and zero mean:
\begin{equation}\label{eq:wavelet_zero_average}
\norm{\genwave(\cdot)}_{\lpL[2]} <+\infty \quad \text{and} \,\,\int_{\Rbb} \genwave(t) \,\dd t = 0.
\end{equation}
We normalized \(\genwave(\cdot)\) to ensure \( \norm{\genwave(\cdot)}_{\lpL[2]} = 1\). We define the dictionary or the basis set of \emph{time-frequency atoms} generated by translation and scaling of the mother wavelet by: 
\begin{equation}\label{eq:wavlelt_dictionary}
    \mathcal{A} \Let \aset[\bigg]{ \genwave_{a,b}(t) = \frac{1}{\sqrt{b}} \genwave \biggl( \frac{t-a}{b} \biggr) }_{a \in \Rbb,\,b\in \loro{0}{+\infty}}.
\end{equation}
The wavelet transform of any given \(\sys(\cdot) \in \lpL[2](\Rbb;\Rbb)\) at time \(a\) and scale \(b\) is the projection of \(\sys(\cdot)\) on the corresponding wavelet atom defined by
\begin{equation}\label{eq:WT}
    (W \sys) (a,b) = \langle \sys,\genwave_{a,b} \rangle = \int_{\Rbb} \sys(t) \frac{1}{\sqrt{b}} \genwave \Bigl( \frac{t-a}{b} \Bigr)\, \dd t.
\end{equation}
Here we consider mother wavelets \(\genwave(\cdot)\) that satisfy the so-called \emph{admissibility condition}; see \cite[Theorem 4.4]{ref:mallat1999wavelet}. 
\begin{remark}\label{rem:why:wavelets}
The WT is widely used in signal processing for detecting irregularities and change points due to its advantageous properties. These include multiresolution analysis which facilitates feature detection at different scales, guarantees for localization and detection under mild conditions, and time-frequency localization enabling precise identification of when specific frequencies occur. Motivated by these properties, we employ the WT in this work.
\end{remark}
\begin{remark}\label{rem:on_Lipschitz_reg_of_approximate_control}
Local regularity analysis using the wavelet transform is well-known in signal processing \cite{ref:mallat1999wavelet}, where certain decay rates of the amplitude of the wavelet transform point towards H\"{o}lder/Lipschitz continuity of the underlying signal. The primary tool that enables this feature is called the \emph{wavelet zoom}. This is performed via diminishing the scale \(b\): in particular when \(f(\cdot)\) is Lipschitz continuous and \(b \downarrow 0\), then \(|(Wf)(a,b)| = \mathcal{O}(b^{3/2})\), and thus fine-scale properties of the signal in the neighborhood of \(a\in \Rbb\) is captured. Another important characterization that is highly relevant for us is to locate the local maxima of the wavelet as change points are localized by finding the abscissae where the wavelet modulus maxima converge at fine scales. A wavelet modulus maximum is defined as a point \( \left(a_0,b_0 \right)\) such that \(|(W \sys)(a,b_0)|\) is locally maximum at \(a=a_0\).
\end{remark}


\begin{remark}
From now on baseline-\(\quito\) will indicate that the transcription algorithm in \S\ref{subsec:shooting} in employed with a finite uniform grid \(\Lambda \Let \{s_i\}_{i \in I}\), where \(I \Let \aset[]{0,1,\ldots,N}\) for some \(N \in \N\), such that
\begin{align}\label{eq:uniform_partition_boxed}
     \lcrc{0}{\horizon} = \bigcup_{i=1}^{N} \interval_i,\text{ where }\interval_i \Let \lcrc{s_{i-1}}{s_i}, \,s_i \Let ih,\text{ and }h \Let \tfrac{T}{N}.
\end{align}
In particular, \(\Lm\) contains grid points within the time interval \(\lcrc{0}{T}\) only, unlike the uniform grid \(\unifgrid\) we defined in Theorem \ref{thrm:optimal estimate} for technical reasons. By \(\quito\) \(\vertwo\) we will refer to the algorithm established herein consisting of the transcription, localization, and refinement together (see Algorithm \ref{alg:refinement_algo} ahead), and the first step of \(\quito\) \(\vertwo\) involves baseline-\(\quito\), i.e., solving the given OCP \eqref{eq:OCP} on a uniform grid.
\end{remark}
\begin{theorem}
\label{thrm:wavelet_decay_rates}
Let \(\ulam(\cdot)\) be the approximate control trajectory obtained by solving \eqref{eq:col_NLP} via the baseline-\(\quito\) on the uniform grid \(\waveunifgrid\), i.e., 
\begin{align}\label{eq:unif_quasi_traj}
   \lcrc{0}{T} \ni t \mapsto \ulam(t) \Let \frac{1}{\sqrt{\pi\Dd}} \sum_{g \in \Lm}\dummyopt(g) \; \epower{-|t-g|^2/\Dd h^2} \in \Rbb^{\dimcon},
\end{align}
where \((\dummyopt(g))_{g \in \Lm} \subset \Rbb^{\dimcon}\) are the optimizers obtained from solving the NLP \eqref{eq:col_NLP}. Let \(n,m\in \N\), fix a mother wavelet \(\genwave(\cdot) \in \mathcal{C}^n(\Rbb;\Rbb)\) such that \(\genwave(\cdot)\) has \(n\) vanishing moments and \(\genwave(\cdot)\) satisfies the decay condition \(|(\genwave)^k(t)| \le \frac{K_{\genwave}}{1+|t|^m}\) for some \(K_{\genwave}>0\) and for \(k=0,\ldots,n\).\footnote{The term \((\genwave)^k\) denotes \(k^{\text{th}}\) derivative of \(\genwave(\cdot)\), \(k=0,\ldots,n\).} Then there exists an absolute constant \(A>0\) such that \begin{align}\label{eq:Holder_estimate}
    |(W\ulam)(a,b)| \le A b^{3/2} \quad \text{for all }a,b \in \Rbb \times \loro{0}{+\infty}.
\end{align}
\end{theorem}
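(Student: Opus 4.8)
The plan is to show that $\ulam(\cdot)$, being a finite linear combination of Gaussians, is smooth with all derivatives bounded, and then invoke the standard wavelet-zoom decay estimate for functions that are locally (in fact globally) Lipschitz. More precisely, first I would observe that $\ulam(\cdot)$ as defined in \eqref{eq:unif_quasi_traj} is a finite sum (the index set $\Lm$ is finite) of terms of the form $\dummyopt(g)\,\epower{-|t-g|^2/\Dd h^2}$, each of which lies in $\mathcal{S}(\Rbb)$; hence $\ulam(\cdot) \in \mathcal{C}^{\infty}(\Rbb;\Rbb^{\dimcon})$ and, crucially, $\ulam'(\cdot)$ is bounded on $\Rbb$ with a bound $L_\star$ that depends only on $\Dd$, $h$, the cardinality of $\Lm$, and $\max_{g\in\Lm}\|\dummyopt(g)\|_\infty$ (the optimizers are confined to the compact set $\Ubb$, so this is finite). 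Consequently $\ulam(\cdot)$ is globally Lipschitz with rank $L_\star$; in particular it is uniformly Lipschitz of exponent $1$ at every point $a\in\Rbb$ in the sense used in wavelet regularity theory.

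Next I would apply the classical pointwise estimate linking Lipschitz regularity to wavelet-coefficient decay --- e.g.\ \cite[Theorem 6.3]{ref:mallat1999wavelet} or its local variant --- which states that if a function $f$ is uniformly Lipschitz-$\alpha$ (with $0<\alpha\le n$) on a neighborhood and the mother wavelet $\genwave(\cdot)$ has $n$ vanishing moments and satisfies the assumed polynomial decay bound $|(\genwave)^k(t)|\le K_\genwave/(1+|t|^m)$, then $|(Wf)(a,b)| \le C\, b^{\alpha + 1/2}$. Specializing to $\alpha = 1$, this yields exactly $|(W\ulam)(a,b)| \le A\, b^{3/2}$. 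I would spell out the one-line mechanism behind this: using the $n\ge 1$ vanishing moments of $\genwave$, one rewrites $(W\ulam)(a,b) = \int (\ulam(t) - \ulam(a))\,\tfrac{1}{\sqrt b}\genwave(\tfrac{t-a}{b})\,\dd t$, bounds $|\ulam(t)-\ulam(a)|\le L_\star|t-a|$, substitutes $s=(t-a)/b$, and uses the decay hypothesis on $\genwave$ (with $m\ge 2$) to make $\int |s|\,|\genwave(s)|\,\dd s$ finite; the powers of $b$ collect to $b^{1+1/2}=b^{3/2}$, and $A \Let L_\star \int_{\Rbb}|s|\,|\genwave(s)|\,\dd s$ is an absolute constant (absolute in the sense of being independent of $a$ and $b$). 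Since each component of the vector-valued $\ulam(\cdot)$ is treated identically, taking the $\ell_\infty$ norm over components only changes $A$ by a harmless factor.

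The main obstacle --- really the only subtle point --- is pinning down that the constant $A$ is genuinely uniform in $(a,b)$ and does not hide a dependence on the (a priori unknown) optimizers or on the time horizon beyond what is acceptable. This is handled by the compactness of $\Ubb$ (assumption \ref{OCPdata1}): $\|\dummyopt(g)\|_\infty \le \max_i \max\{|U^i_{\text{min}}|,|U^i_{\text{max}}|\}$ for every $g$, so the Lipschitz rank $L_\star$ of $\ulam(\cdot)$ is bounded by a quantity involving only $\Dd$, $h$, $N$, and the box bounds of $\Ubb$ --- all fixed once the grid $\Lm$ and parameters are chosen. A secondary technical care-point is that the vanishing-moment/decay hypotheses on $\genwave$ are invoked only for $k=0$ here (the bound $|\genwave(s)|\le K_\genwave/(1+|s|^m)$ with $m\ge 2$ suffices to integrate $|s|\,|\genwave(s)|$); the higher-order bounds $k=1,\ldots,n$ in the hypothesis are stronger than needed for this particular estimate but are stated for consistency with the broader wavelet-zoom framework and with the modulus-maxima convergence discussion in Remark \ref{rem:on_Lipschitz_reg_of_approximate_control}. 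I would close by noting that the estimate is stated for all $(a,b)\in\Rbb\times\loro{0}{+\infty}$ and that, since we only truly need the small-$b$ regime for change-point localization, the bound is in fact sharp in its $b$-exponent for the smooth (non-singular) approximant $\ulam(\cdot)$.
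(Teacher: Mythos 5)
Your proposal is correct and takes essentially the same route as the paper: both observe that $\ulam(\cdot)$, being a finite sum of bounded, scaled Gaussians with coefficients confined to the compact set $\admcont$, is globally Lipschitz (and square-integrable), and then invoke Mallat's wavelet-zoom decay estimate (the paper cites Theorem 6.4 of \cite{ref:mallat1999wavelet}; you reference Theorem 6.3/its variant, which is the pointwise version, but the uniform version 6.4 is what is actually used). The only addition on your side is a self-contained derivation of the $b^{3/2}$ bound from the first vanishing moment and the decay hypothesis, which is a faithful unpacking of Mallat's proof; the one tiny slip is that the integral $\int |s|\,|\genwave(s)|\,\dd s$ requires $m>2$ (so $m\ge 3$ for integer $m$), not merely $m\ge 2$.
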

\begin{proof}
Note that \(\ulam(\cdot)\) is a finite sum of weighted and scaled Gaussian generating functions that are bounded, smooth, and rapidly decaying. Moreover for each \(g \in \Lm\), these Gaussians are square-integrable and Lipschitz continuous, and thus the finite sum \eqref{eq:unif_quasi_traj} is square-integrable and Lipschitz. Consequently, for a given mother wavelet \(\genwave(\cdot)\) satisfying our hypotheses, the estimate
\begin{align}
    |(W\ulam)(a,b)| \le A b^{3/2} \quad \text{for all }a,b \in \Rbb \times \loro{0}{+\infty},\nn
\end{align}
follows immediately from \cite[Chapter 6, Theorem 6.4]{ref:mallat1999wavelet}. 
\end{proof}
\begin{remark}
Let the mother wavelet \(\genwave(\cdot)\) be continuously differentiable and suppose that it satisfies the hypotheses of Theorem \ref{thrm:wavelet_decay_rates}. In addition, let \(\genwave(\cdot)\) have compact support. Define \(\theta:\Rbb \lra \Rbb\) such that \(\int_{\Rbb}\theta(t)\,\dd t \neq 0\) with \(\genwave(\cdot) = (-1)^n \theta^{(n)}(\cdot)\). Then from \cite[Chapter 6, Theorem 6.5]{ref:mallat1999wavelet} it follows that there exists \(b_0>0\) such that \(|(W\ulam)(a,b)|\) has no local maximum for \(a\in \lcrc{0}{\horizon}\) and \(b < b_0\), then \(\ulam(\cdot)\) is uniformly Lipschitz on \(\lcrc{\bar{\delta}}{\horizon-\bar{\delta}}\), for any \(\bar{\delta} > 0\). The estimate in Theorem \ref{thrm:wavelet_decay_rates} in conjunction with the preceding fact guarantees that all change points are localized by following the wavelet transform modulus maxima at the fine scale. Further refinements of the estimate \eqref{eq:Holder_estimate} are possible which can cater to particular cases and types of change points/singularities in \(\ulam(\cdot)\); we refer \cite[Chapter 6, \S6.1.3]{ref:mallat1999wavelet} for more details. We make a note that we will be employing derivative of the Gaussian function as mother wavelets in our numerical examples. Even though the derivative of Gaussian wavelets do not have compact support, in practice this feature has very little effect on the results. Moreover, the choice of derivatives of Gaussian functions as the mother wavelet guarantees that a modulus maximum located at \((a_0,b_0)\) belongs to a maxima line that propagates toward finer scales \cite[Chapter 6, Theorem 6.6]{ref:mallat1999wavelet}.\footnote{See \cite[\S 6.2.1]{ref:mallat1999wavelet} for a precise definition of the term maxima line.}

\end{remark}
\begin{algorithm2e}[htpb]
\DontPrintSemicolon
\SetKwInOut{ini}{Initialize}
\SetKwInOut{giv}{Data}
\SetKwInOut{nlp}{NLP}
\SetKwInOut{out}{Output}
\giv{\(\rcost(\cdot,\cdot), \fcost(\cdot), f(\cdot), G(\cdot), h_j(\cdot), \admcont, \param, r_F(\cdot)\)}
\ini{\(\circ\) A uniform grid \(\Lm\), \(\genfn (\cdot) \in \mathbb{B}\), \(\Dd \in \loro{0}{+\infty}\), and  \(b_{0}\)}
\nlp{\(\circ\) Solve the nonlinear program \eqref{eq:col_NLP} on \(\Tgrid\) \\
\(\circ\) Obtain \(\ulam(\cdot)\) using \eqref{eq:unif_quasi_traj} and record the cost \(\refcost\)}
Choose mother wavelet \(\genwave(\cdot)\) and perform wavelet transform of \(\ulam(\cdot)\) \\
Define \(\tau \Let \aset[\big]{t\as\in \lcrc{0}{\horizon} \suchthat \frac{\partial  (W \ulam)}{\partial t}\big \rvert_{t=t\as}(t,b_0) = 0}\) and find \(t \as \in \tau\)\\
\out{\((W\ulam)(a,b)\) where \((a,b) \in \Rbb \times \loro{0}{+\infty}\) and \(t\as \in \tau\).}  
\caption{The wavelet-based localization algorithm}
\label{alg:detection_algo}
\end{algorithm2e}
\begin{remark}
Multiresolution- and wavelet-based methods for localization of variations in numerical control trajectories have been employed in the past. In \cite{ref:multires:SJ:CDC07}, the authors developed a multiresolution-based trajectory optimization algorithm, and \cite{ref:WaveletDetect:2005:WM, ref:WaveletDetect:2014:WM} established dynamic optimization algorithms with a wavelet-based localization engine catering to specific problems arising from chemical process industries; no theoretical guarantees of convergence were provided there. In contrast, our results contain uniform convergence guarantees, localization guarantees, and a numerical algorithm with a software package, catering to different classes of numerically difficult trajectory optimization problems for low through moderately high dimensional systems.
\end{remark}
\subsection{Mesh Refinement Scheme.}\label{subsubsec:meshrf_scheme}
We now present a targeted \(h\)-refinement (localized addition of grid/mesh points over the localized time-patches) algorithm for control trajectories leveraging knowledge from the localization mechanism described in \S\ref{subsec:detection}. The overarching idea behind the mesh refinement algorithm is to insert additional grid points at the localized patches in an iterative fashion until an error threshold in terms of the optimized cost function (as defined in \eqref{eq:cost_decrease}) is satisfied. 
 
\par Recall that we defined the uniform grid \(\Lambda \Let \aset[]{s_i}_{i \in I}\) in \eqref{eq:uniform_partition_boxed} with the property 
\begin{align}\label{eq:uniform_partition}
     \lcrc{0}{\horizon} = \bigcup_{i=1}^{N} \interval_i,\text{ where }\interval_i \Let \lcrc{s_{i-1}}{s_i}, \,s_i \Let ih,\text{ and }h \Let \frac{\horizon}{N}.
\end{align}
We transcribe the OCP \eqref{eq:OCP} as described in \S\ref{subsec:shooting} employing baseline-\(\quito\) via parameterizing the control trajectory over the uniform grid \(\Lambda\) using the quasi-interpolants in \(\mathcal{U}'\). We record the optimizers \((\dummyopt(g))_{g \in \Lm}\) obtained from solving the corresponding NLP \eqref{eq:col_NLP} and the minimized cost \(\refcost_0\). Generate the trajectory
\begin{align}\label{eq:unif_quasi_traj_mf}
 \lcrc{0}{T} \ni t \mapsto \ulam(t) \Let \frac{1}{\sqrt{\pi \Dd}} \sum_{g \in \Lambda }\dummyopt(g)\;\epower{-|t-g|^2/\Dd h^2} \in \Rbb^{\dimcon}.
\end{align}

\begin{remark}\label{rem:approx_control_as_proxy}
Note that the optimal control trajectory \(\cont\as(\cdot)\) is generally unavailable in an analytical form. However, for a given \(\eps>0\), we have shown via Theorem \eqref{thrm:optimal estimate} that the approximating trajectory \(\ulam(\cdot)\) is \(\eps\)-close to \(\cont\as(\cdot)\). To this end, we use \(\ulam(\cdot)\) as a proxy to \(\cont\as(\cdot)\) in the change point localization algorithm and proceed to refine the grid at the localized change points.
\end{remark}
We employ the localization Algorithm \ref{alg:detection_algo} on \(t \mapsto \ulam(t)\) in \eqref{eq:unif_quasi_traj_mf} and obtain the pinpointed temporal location of wavelet modulus maxima at a chosen fine scale \(b_0\). Recall that, \(\tau\) denotes the set of time instances at which these wavelet maxima occur i.e., 
\begin{align}\label{eq:wavelt_max_loc}
    \tau \Let \aset[\bigg]{t\as\in \lcrc{0}{\horizon} \suchthat \frac{\partial  (W \ulam)}{\partial t}\bigg \rvert_{t=t\as} (t,b_0) = 0},
\end{align}
and the temporal locations \(t\as \in \tau\) corresponds to the change points in \(\ulam(\cdot)\). Let \(\width>0\), which we call the refinement width parameter, and define
\begin{align}\label{eq:ref_intervals}
    \rfi \Let \aset[\bigg]{(\interval_i)_{i \in I} \suchthat \interval_i \cap \bigcup_{t\as \in \tau} \Ball\left[t\as,\frac{\width}{2}\right] \neq \varnothing }.
\end{align}
The set \(\rfi\) contains prospective intervals \(\interval_i\) having nonempty intersection with the interval \(\Ball\left[t\as,\frac{\width}{2}\right]\) when \(t\as\) varies over the set \(\tau\), and these \(\interval_i\) intervals will subsequently be refined. For us, the refinement intervals \(\interval_i \in \rfi\) for each \(i \in I\) will remain fixed for the iterative refinement process. 

Let us describe the first iteration in detail. Let \(\lambda \in \N\) be a refinement parameter and we insert \(\lambda\) number of uniformly spaced grid points in each of the intervals \(\interval_i \in \rfi\). Let \(Y \Let \aset[]{i \in I \suchthat \interval_i \in \rfi}\) be an index set. Construct the piecewise uniform grid
\begin{align}\label{eq:PW_grid}
    \Grid_1
    & \Let \aset[\big]{s_i \suchthat i \in I } \cup \bigcup_{i\in Y} \aset[\bigg]{s_{i-1}+\frac{h}{\lambda+1},\ldots,s_{i-1}+\frac{\lambda h}{\lambda+1}}\nn \\
    &= \Lm \cup \bigcup_{i\in Y} \aset[\bigg]{s_{i-1}+\frac{h}{\lambda+1},\ldots,s_{i-1}+\frac{\lambda h}{\lambda+1}}.
\end{align}
Next, we solve the OCP \eqref{eq:OCP} by transcribing it as per the multiple shooting algorithm established in \S\ref{subsec:shooting} via parametrizing the control trajectory on \(\Grid_1\) using the quasi-interpolant \eqref{eq:Mhu_piecewise_grid}, and the corresponding minimized cost \(\refcost_1\) is recorded. For the second iteration, the above process is repeated by adding additional grid points within refinement intervals \(\interval_i \in \rfi\) at each iteration. We employ a linear increment in grid size over iterations, by inserting  \(\lambda j\) additional grid points in \(\interval_i \in \rfi\) at the \(j\)th iteration. More precisely, we define the indexed grid \(\Grid_j\) to denote the subsequent refined grid at the \(j\)th iteration by
\begin{align}\label{eq:PW_grid_iterative}
    \Grid_j \Let \Lm \cup \bigcup_{i\in Y} \aset[\bigg]{s_{i-1}+\frac{h}{(\lambda j)+1},\ldots,s_{i-1}+\frac{(\lambda j) h}{(\lambda j)+1}}
\end{align}
(note that, in particular, \(\Grid_0 = \Lm\)), and we denote the minimized cost by \(\refcost_j\). 
\begin{figure}[htpb]
\includegraphics[scale=0.55]{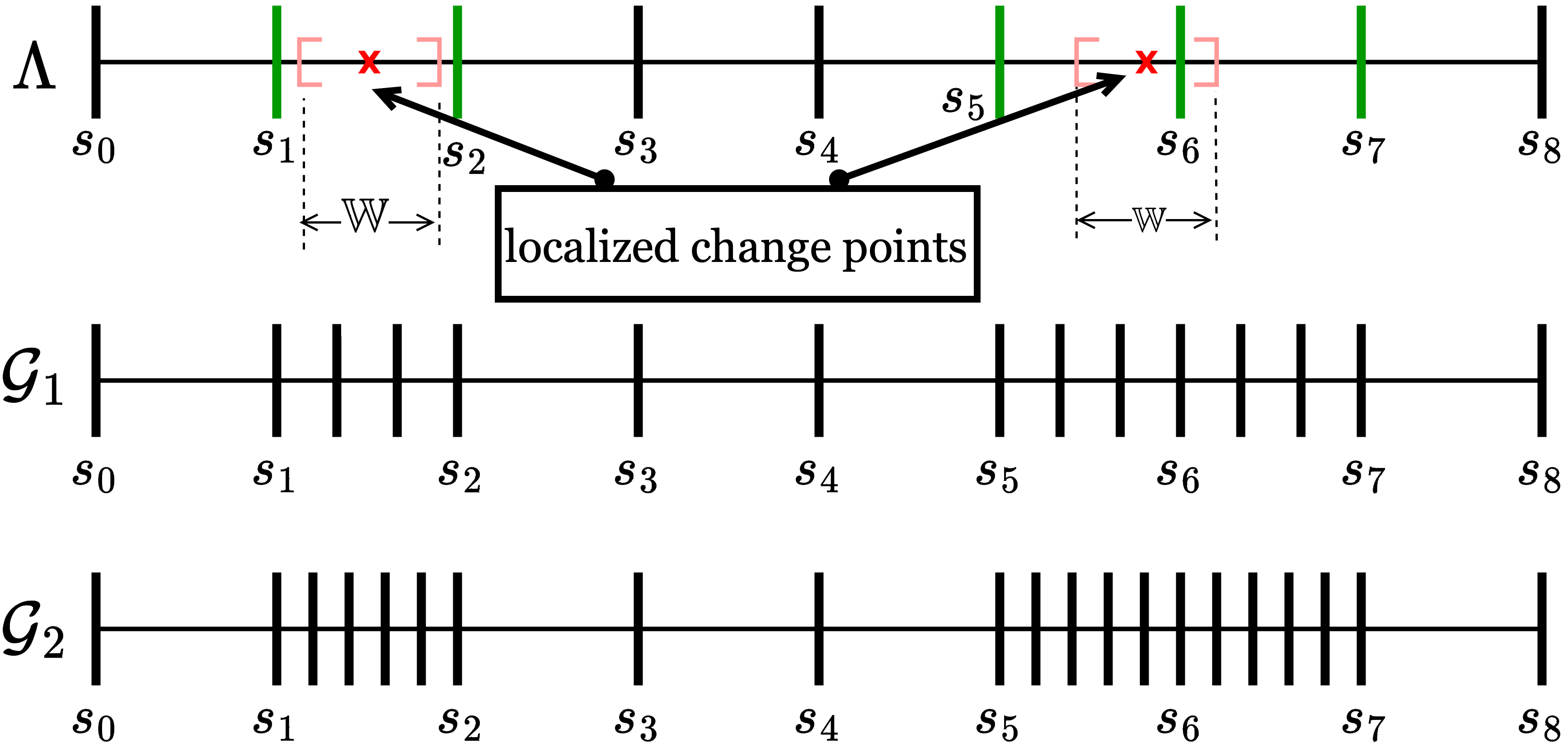}
\caption{Example of mesh refinement procedure for two change point localization is shown for two refinement iterations with \(\lambda=2\) where the change points are marked in red \(\textcolor{red}{\times}\). As usual, \(\Lm\) denotes an initial uniform grid (top) and the refinement intervals are represented by green lines. \(\Grid_1,\Grid_2\) are the refined grids (bottom) after the first and second iterations respectively. }\label{fig:refinement_procedure}
\end{figure}
We use the recorded minimized cost \(\refcost_j\) at each iteration as a relative indicator of the quality of the solution. To this end, let \(\widehat{K} \in \N\) denote the maximum number of refinement iterations. The absolute of the relative change in cost after refining the grid at the \(j\)th iteration is denoted as 
\begin{align}\label{eq:cost_decrease}
\reftol_j \Let \abs{\frac{\refcost_{j-1}-\refcost_{j}}{\refcost_{j-1}}}\quad \text{for } j=1,\ldots,\widehat{K}
\end{align}
The refinement iterations are repeated until the cost saturates, i.e., when \(\reftol_j\) is smaller than a predefined termination tolerance \(\bar{\reftol}\). A pictorial view of the refinement process is shown in Figure \ref{fig:refinement_procedure}.
\begin{remark}\label{rem:refinement_remarks}
The mesh refinement parameters, namely \(\lambda, \width,\bar{\reftol}\), decide the degree of refinement and can be tuned quite easily based on the OCP at hand. To wit:
\begin{enumerate}[label=(\alph*), leftmargin=*, widest=b, align=left]

\item The refinement parameter \(\lambda\) decides the local grid spacing \(hh_m\) of the piecewise uniform grid introduced in Theorem \eqref{thrm:app-app-piecewise_grid_formula}, i.e., for \(j=1\), we have   \(h_m=1/(\lambda+1)\), and  choosing \(\lambda>1\) satisfies the desired requirement \(h_m<1/2\) for \(j=1,\ldots,\widehat{K}\). Moreover, by construction \(Z(h_m) = \aset[\big]{0,1,\ldots,\tfrac{1}{h_m}-1}\).
 
\item Depending on the location of a change point relative to the grid points in \(\Lambda\),  approximately \(\lceil \width/h \rceil\) partitions surrounding the change point are refined further. This ensures sufficient mesh refinement for the initial uniform grid \(\Lambda\). The minimal error in localizing change points caused due to employing \(\ulam(\cdot)\) as a proxy for \(\cont\as(\cdot)\) in the change point localization algorithm (Remark \eqref{rem:approx_control_as_proxy}) can be well accommodated by increasing the refinement width parameter \(\width\).
\end{enumerate}
\end{remark}
The DMS technique \(\quito\) \(\vertwo\), as a whole, with localization and refine modules is given in Algorithm \ref{alg:refinement_algo}.
\begin{algorithm2e}[htpb]
\DontPrintSemicolon
\SetKwInOut{ini}{Initialize}
\SetKwInOut{giv}{Data}
\SetKwInOut{nlp}{NLP\((\cdot)\)}
\SetKwInOut{out}{Output}
\giv{\(\rcost(\cdot,\cdot), \fcost(\cdot), f(\cdot), G(\cdot), h_j(\cdot), \admcont, \param, r_F(\cdot),b_0\)}
\ini{\(\circ\) A finite uniform grid \(\Lm\) and a corresponding \(h\), \(\genfn(\cdot) \in \mathbb{B}\) and \(\Dd \in \loro{0}{+\infty}\)\\
\(\circ\) Refinement parameters, \(\lambda, \width\) \\
\(\circ\)  Cost termination tolerance \(\bar{\reftol}\), maximum number of iterations \(\widehat{K}\)}
\nlp{\(\circ\) Solve the nonlinear program \eqref{eq:col_NLP} on a given grid \(\ol{\mathcal{G}}\)
\\
\(\circ\) Obtain corresponding control trajectory \(\widehat{u}_{\ol{\mathcal{G}},\Dd}\), and \(\refcost\).}

Compute \(\widehat{\cont}_{\Lm,\Dd}(\cdot)\) and record the minimized cost \(\refcost_0\) solving \textbf{NLP}\((\Lm)\) \\
Fix \(\N \ni \bar{M}>1\) and define \(N'\Let N+\gridindex\) where \(\gridindex \in \aset[]{1,\ldots,\bar{M}}\) is a random positive integer, and solve \textbf{NLP}\((\Lm')\) on a uniform grid \(\Lm'\) with number of steps \(N'\).\\
Employ Algorithm \ref{alg:detection_algo} on \(\wh{\cont}_{\Lm,\Dd}(\cdot)\), and compute prospective refinement intervals  \(\interval_i\in \rfi\) as defined in \eqref{eq:ref_intervals}\\

\For{ \(j=1,\ldots,\widehat{K}\)}
    {
    Add \(\lambda j\)  uniformly spaced grid points within each refinement interval and obtain piecewise uniform grid  \(\Grid_j\) as defined in \eqref{eq:PW_grid_iterative} \\ 
    Compute \(\wh{\cont}_{\Grid_j,\Dd}(t)\) and record the minimized cost \(\refcost_j\) solving \textbf{NLP}\((\Grid_j)\)   \\
    Evaluate \(\reftol_j \Let \abs{\frac{\refcost_{j-1}-\refcost_{j}}{\refcost_{j-1}}}\) as defined in \eqref{eq:cost_decrease}

    \If{ \(\reftol_j<\bar{\reftol}\)}
    {
    \textbf{Terminate}    
    }    
    }
  \out{Approximate control trajectories after mesh refinement}  
\caption{\(\quito\) \(\vertwo\): transcription, localization, and refinement algorithm}
\label{alg:refinement_algo}
\end{algorithm2e}

\begin{remark}
Note that:
\begin{itemize}[leftmargin=*]
    \item Step 2 of Algorithm \ref{alg:refinement_algo} introduces some degree of randomization by incorporating a step where, for a given OCP, our transcription technique (via the baseline-\(\quito\)) is performed for several random step sizes. This step can be bypassed for most problems but it is needed to avoid classes of degenerate problems, where due to the nature of the path constraints,  a naive choice of \(h\) may be insufficient to accurately capture certain problem specifications. For example, consider the following OCP:
\begin{equation}\label{eq:some_OCP}
\begin{aligned}
& \min_{\cont(\cdot)}	&& \int_0^1 \st(t)\,\odif{t}\\
&  \sbjto		&&  \begin{cases}
\dot{x}(t) = u(t),\,\st(0)= \bar{x} = 0,\\
\st(t) \ge \sin (10 \pi t) \quad \text{for a.e. }t \in \lcrc{0}{1},\\
\st(t) \ge 0 \quad \text{for a.e }t \in \lcrc{0}{1}, \\
u(t) \in \lcrc{-1}{1} \quad \text{for a.e }t \in \lcrc{0}{1}. \nn
\end{cases}
\end{aligned}
\end{equation}
where with an initial \(h = \tfrac{1}{10}\) , the approximate solution will be zero everywhere with objective value \(0\), while a lower bound for the objective of the OCP is \(\int_0^1 \max \left\{ 0, \sin (10 \pi t) \right\} \,\dd t = \tfrac{1}{\pi} >0.\)\footnote{We thank the anonymous reviewer who pointed out this example to us.}

\item Moreover, note that as with every other numerical technique, \(\quito\,\vertwo\) is not a general purpose numerical tool that can cater to \emph{every possible} OCP. For instance, there are classes of problems where the problem data is smooth but \(u\as(\cdot)\) is as discontinuous as one likes \cite[Chapter 23]{ref:clarke_ocpbook}, and consequently, it is easy to engineer problems that lie beyond the ambit of each technique. Here are two such cases:
\begin{enumerate}
    \item Let \(u\as:\lcrc{0}{1} \lra \Rbb^d\) be any measurable function such that \(u\as(t) \in \Ball(0,1)\) for a.e. \(t \in \lcrc{0}{1}\). Define \(x\as(t) \Let \int_0^t u\as(\tau)\,\dd{\tau}\). Note that \(x\as(\cdot)\) is Lipschitz and its graph \(G_x\) is a closed subset of \(\lcrc{0}{1} \times \Rbb^d\), and consequently there exists a nonnegetive map \(\rcost \in \mathcal{C}^{\infty}\bigl(\Rbb^{d+1};\Rbb\bigr)\) such that \( G_x = \aset[]{(s,\xi) \in \lcrc{0}{1} \times \Rbb^d \suchthat \rcost(s,\xi) = 0}\). Now consider the OCP: 
\begin{equation}\label{eq:another_pathological_OCP}
\begin{aligned}
& \min_{\cont(\cdot)}	&& \int_0^1 \rcost(t,x(t))\,\dd{t}\\
&  \sbjto		&&  \begin{cases}
\dot{x}(t) = u(t),\,\st(0)= \bar{x} = 0,\\
u(t) \in \Ball(0,1) \quad \text{for a.e. }t \in \lcrc{0}{1}. \nn
\end{cases}
\end{aligned}
\end{equation}
Clearly, \(\bigl(x\as(\cdot),u\as(\cdot)\bigr)\) is a unique optimal state-action pair that gives a zero cost \cite[\S 23.3]{ref:clarke_ocpbook}, but \(u\as(\cdot)\) is merely measurable and can, therefore, be as discontinuous as one wishes. This is one of the reasons behind many of the regularity assumptions, such as \ref{OCPdata_new_1}--\ref{OCPdata_new_4} (specially \ref{OCPdata_new_2}, i.e., the instantaneous cost should be a strongly convex function of \(u\)), so that certain smoothness conditions (for example, Lipschitz continuity in our case) on \(u\as(\cdot)\) are ensured.

\item A second example is the Fuller’s problem \cite[Chapter 2]{ref:zelikin2012theory}. Despite the smoothness of the problem data in the underlying optimal control problem (OCP), the optimal control trajectory, which is not even singular, demonstrates a bang-bang solution with an infinite number of switches. Naturally, this problem lies beyond the ambit of \emph{any} numerical solver.
\end{enumerate}
\end{itemize}
\end{remark}

\section{Numerical experiments and discussion}\label{sec:num_exp}
We record a library of numerical examples ranging from low through high dimensions to showcase the numerical performance of \(\quito\) \(\vertwo\) and its localization and refinement components. Most of the examples we consider herein are well-known benchmark numerically hard problems because the underlying optimal control problems are \emph{singular}; see the examples in \S\ref{num:bressan} and \S\ref{num:catalystmixOCP}. We compared our results with the state-of-the-art pseudospectral direct collocation techniques and the integrated residue minimization technique \cite{ref:YN:ECK:residue} via the open source ICLOCS2 \cite{ref:nie2018iclocs2} solver, and with the multiple shooting technique via the ACADO toolkit \cite{ref:software_acado}. We demonstrate the ability of our technique to accurately localize change points in the control profile and to furnish accurate solutions through targeted mesh refinement as compared to the three preceding tools. Equipped with the localization and the mesh refinement modules, we obtain superior performance in terms of the quality of the solution (as compared to the state-of-the-art) while mildly sacrificing the computation time (in certain cases). We have also encoded the Algorithm \ref{alg:refinement_algo} as a software package, and a brief description and functionalities of the software are given in \S\ref{sec:software}.

The terminologies and data in Table \ref{tab:data_for_numerics} are used in the upcoming examples unless otherwise specified.
\begin{table}[htbp]
\begin{tblr}{l c}
\hline[2pt]
\SetRow{azure9}
Terminology/data & Meaning  \\ 
\hline[2pt]
\(\Ninit\) (same as the \(N\) given in \eqref{eq:uniform_partition}) &  (No. of grid points in  initial mesh) \(- 1\)\\

 \(\Nfin\) & No. of grid points in final mesh (after refinement) \\ 

\(\genfn(\cdot)\) (generating function) & Gaussian \\ 

\(\Dd\) (shape parameter) & 2 \\

\(\lambda\) (refinement parameter) & 4 \\ 

\(\width\) (refinement width parameter) & \(\frac{\tfin}{21}\) \\ 

\(\bar{\reftol}\) (tolerance) & \(10^{-4}\) \\
\hline[2pt]
\end{tblr}
\centering
\vspace{2.5mm}
\caption{Common data and terminologies}
\label{tab:data_for_numerics}
\end{table}
Note that the parameters in Table \ref{tab:data_for_numerics} can be tuned, further to cater to a specific problem for better performance. All examples were solved using MATLAB version R2021a with CasADi interface \cite{ref:CASADI_andersson2019} using the NLP solver IPOPT \cite{ref:IPOPT_Biegler}. For the wavelet transform employed in the localization procedure, we use the Ricker/Mexican hat wavelet at scales of order \(10^{-4}\), which is the negative normalized second derivative function of the Gaussian density. For locating wavelet maxima, we used MATLAB's \texttt{findpeaks} functionality. For the ICLOCS2 direct collocation method, we used Euler discretization and active mesh refinement for a fair comparison. All computations were performed on Windows \(2.00\)GHz AMD Ryzen-\(7\) \(4700\)-U processor with \(16\)GB RAM. The central processing unit (CPU) times for all the solvers reported here are the total execution time, including the NLP time required to solve the OCP over multiple refinement iterations. 

A list of the problems that we solved via \(\quito\) \(\vertwo\) (Algorithm \ref{alg:refinement_algo}) is given in Table \ref{tab:list_of_problems}. The computation time and performance of \(\quito\) \(\vertwo\) across these examples are collected separately in Table \ref{tab:computation_list} in \S\ref{subsec:CPUtimediscussion}.
\begin{table}[h!]
\begin{tblr}{l c c c}
\hline[2pt]
\SetRow{azure9}
Problem & Dimension & Constraints & Type \\ 
\hline[2pt]
Example \eqref{num:bangbangOCP} &  \(1\) & control & bang-bang\\
Example \eqref{num:bressan} & \(3\) & control & singular\\ 
Example \eqref{num:catalystmixOCP}  & \(2\) & control & bang-singular-bang \\ 
Example \eqref{num:SIRIOCP} & \(3\) & state and control & bang-bang\\
Example \eqref{numexp:auv_pathpanning}  & \(18\) & state, control, path & normal\\ 
\hline[2pt]
\end{tblr}
\centering
\vspace{2.5mm}
\caption{A list of problems treated in \S\ref{sec:num_exp}.}
\label{tab:list_of_problems}
\end{table}
\subsection{Bang Bang problem}\label{num:bangbangOCP}
We start with a simple problem with one-dimensional dynamics and simple bounds on the control. Consider the OCP \cite{ref:ST:2011numOCP} :
\begin{equation}
\label{eq:bangbangmesh}
\begin{aligned}
& \minimize_{\cont(\cdot)}	&&  \frac{1}{2} \int_{0}^{1}  \left( \epower{-t} - 2t\right)x(t)\, \dd t \\
&  \sbjto		&&  \begin{cases}
\dot{\st}(t)= -tx(t) + \log(u(t) + t + 3) \,\,\text{for all }t \in \lcrc{0}{1},\\
\st(0) = 0,\, \st(1) = 0.8,\,|u(t)| \le 1,
\end{cases}
\end{aligned}
\end{equation}
featuring a bang-bang solution. We fixed a \emph{uniform grid} with size \(\Ninit=50\) and solved the problem \eqref{eq:bangbangmesh} using baseline-\(\quito\). Next, we employed Algorithm \ref{alg:detection_algo}, i.e., we performed the wavelet transform of the obtained control trajectory and located the wavelet modulus maxima. As shown in Figure \eqref{fig:BB_DMCS} (top left), we obtained wavelet modulus maxima at \(\tau = \aset[]{0.188,0.511}\) (also see the scalogram in Figure \ref{fig:BBscalocon} (left)). Subsequently, the mesh was refined around these specific locations as per Algorithm \eqref{alg:refinement_algo}.
\begin{figure}[htpb]
\centering
\includegraphics[scale=0.42]{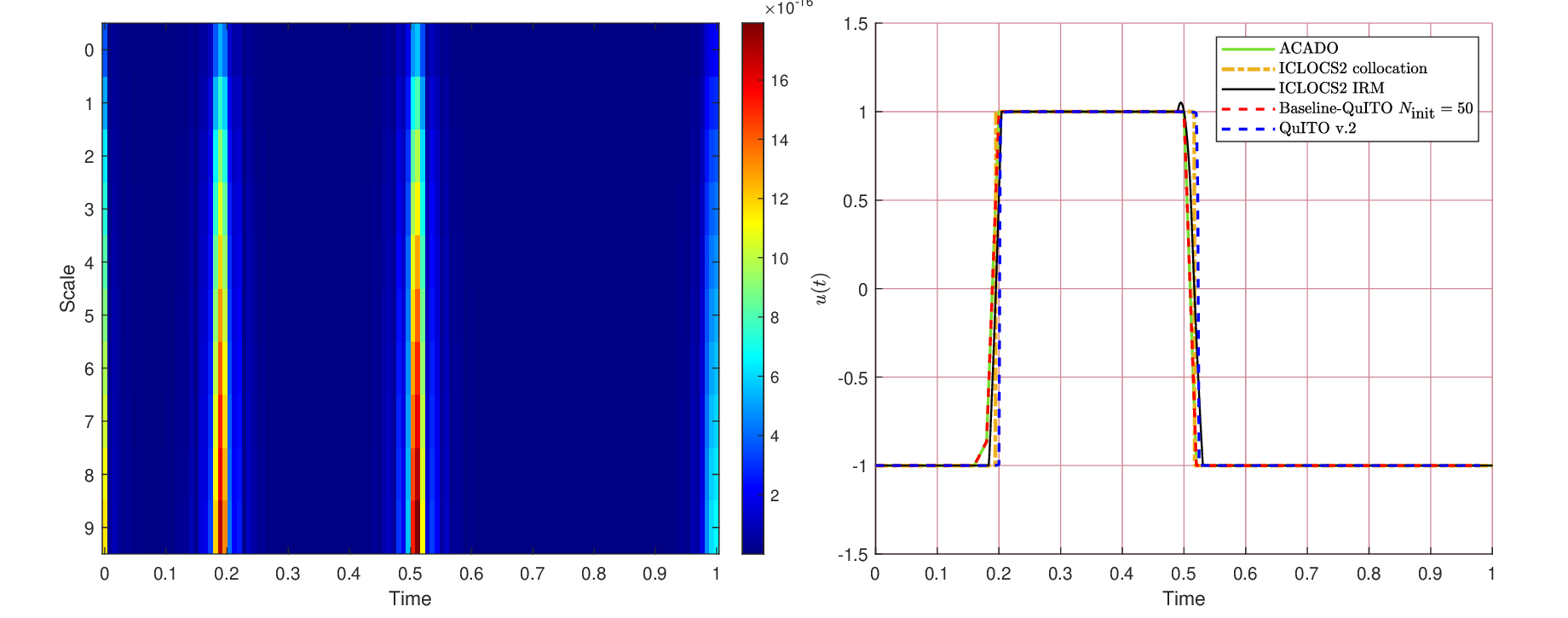}
\caption{The wavelet scalogram (left) of the control trajectory obtained from the baseline-\(\quito\) where the vertical axis captures the scale \(b\) of the wavelet. The control trajectories (right) obtained using \(\quito\) \(\vertwo\), ICLOCS2's collocation, and baseline-\(\quito\) for Example \ref{num:bangbangOCP}.}\label{fig:BBscalocon}
\end{figure}
The termination tolerance was set to \(\bar{\reftol}= 10^{-3}\). The refinement width parameter was set to \(\width= 1/21\) units and subsequently \( \lceil (\Ninit+1) /21 \rceil =3\) grid intervals surrounding the localized change points in the uniform grid were refined; this is shown in Figure \eqref{fig:BB_DMCS} (top right). We observed that the specified termination tolerance of \(\bar{\reftol}= 10^{-3}\) was met after the two iterations, and the algorithm terminated with final mesh size \(\Nfin = 99\). The corresponding minimized cost over the iterations and the final state trajectories are shown in Figure \eqref{fig:BB_DMCS} (bottom left and right). 

The control trajectories obtained using the initial uniformly spaced mesh via baseline-\(\quito\) (with \(50\) steps), the refined control trajectory from \(\quito\) \(\vertwo\), the control trajectories obtained via LGR collocation (using ICLOCS2 with \(50\) steps and with active mesh refinement), Integrated Residue Minimization (IRM) (using ICLOCLS2 with \(50\) steps), direct multiple shooting with piecewise constant parameterization of control (using ACADO with \(50\) steps) are depicted in Figure \eqref{fig:BBscalocon} (right). The corresponding optimization statistics are shown in Table \eqref{vdo:BB-cpu-time}.  We observe that the control trajectory after mesh refinement accurately captures the jump discontinuities and overlaps with the solutions obtained via other algorithms, validating the effectiveness of our mesh refinement algorithm. 
\begin{figure}[htpb]
\includegraphics[scale=0.75]{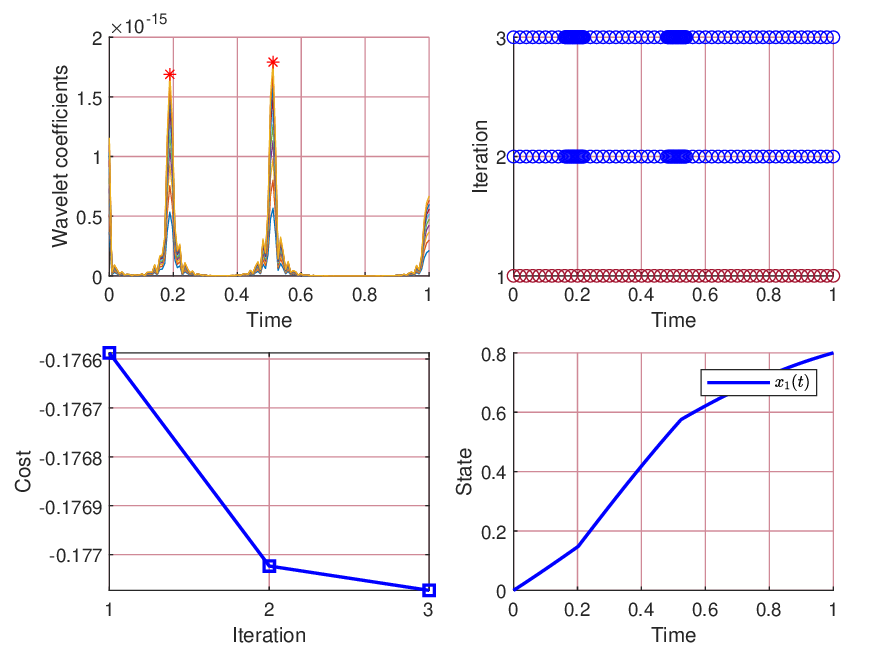}
\caption{ Wavelet coefficients for a set of scales of order \(10^{-3}\) (top left), mesh history during refinement (top right), cost variation (bottom left), and state trajectory after refinement (bottom right)  for Example \ref{num:bangbangOCP}.}\label{fig:BB_DMCS}
\end{figure}


\begin{table}[htpb]
\centering
\begin{tblr}{l c c }
	\hline[2pt]
 \SetRow{azure9}
Method & \(\Ninit\) & \(\Nfin\) &  Minimized cost  \\ 
\hline[2pt]
\(\quito\) \(\vertwo\) &  \(50\) & 99 &  \(-0.183\)  \\
ICLOCS2 collocation with mesh-refinement & \(50\) & 4186 &  \(-0.183\)   \\
ICLOCS2 Integrated Residue Method  & \(50\) & 50  & \(-0.183\)  \\
ACADO  & \(50\) & 50 & \(-0.183\)    \\
\hline[2pt]
\end{tblr}
\vspace{2.5mm}
\caption{A comparison of optimization statistics for Example \ref{num:bangbangOCP}.}
\label{vdo:BB-cpu-time}
\end{table}


\subsection{Bressan problem}\label{num:bressan}
Consider the following optimal control problem with time horizon \(\horizon=10\) reported in \cite{ref:BressPicc:MathCon}:
\begin{equation}
\label{eq:bressanOCP}
\begin{aligned}
& \minimize_{\cont(\cdot)}	&&  -\st_3(\horizon)\\
&  \sbjto		&&  \begin{cases}
\dot{\st_1}(t)= \cont(t),\,\dot{\st_2}(t)= -\st_1(t),\\
\dot{\st_3}(t)= \st_2(t) - \st_1(t)^2,\\
\st(0)= (0,0,0), \, |u(t)| \le 1, \,\horizon=10.
\end{cases}
\end{aligned}
\end{equation}
The problem \eqref{eq:bressanOCP} is known to be  singular, and the analytical expression of the optimal control trajectory is given by
\begin{equation}
\label{eq:bressanoptimalsol}
\begin{aligned}
&  \lcrc{0}{\horizon} \ni  t \mapsto \cont\as(t) \Let  \begin{cases}
-1 & \text{if} \,  \, 0\le t < \frac{\horizon}{3}, \\
\frac{1}{2} & \text{if}\, \, \frac{\horizon}{3} \le t < \horizon,
\end{cases}
\end{aligned}
\end{equation}
which is a bang-bang solution. We fixed a \emph{uniform grid} with size \(\Ninit=20\) and solved the problem \eqref{eq:bressanOCP} using baseline-\(\quito\).
\begin{figure}[htpb]
\includegraphics[scale=0.55]{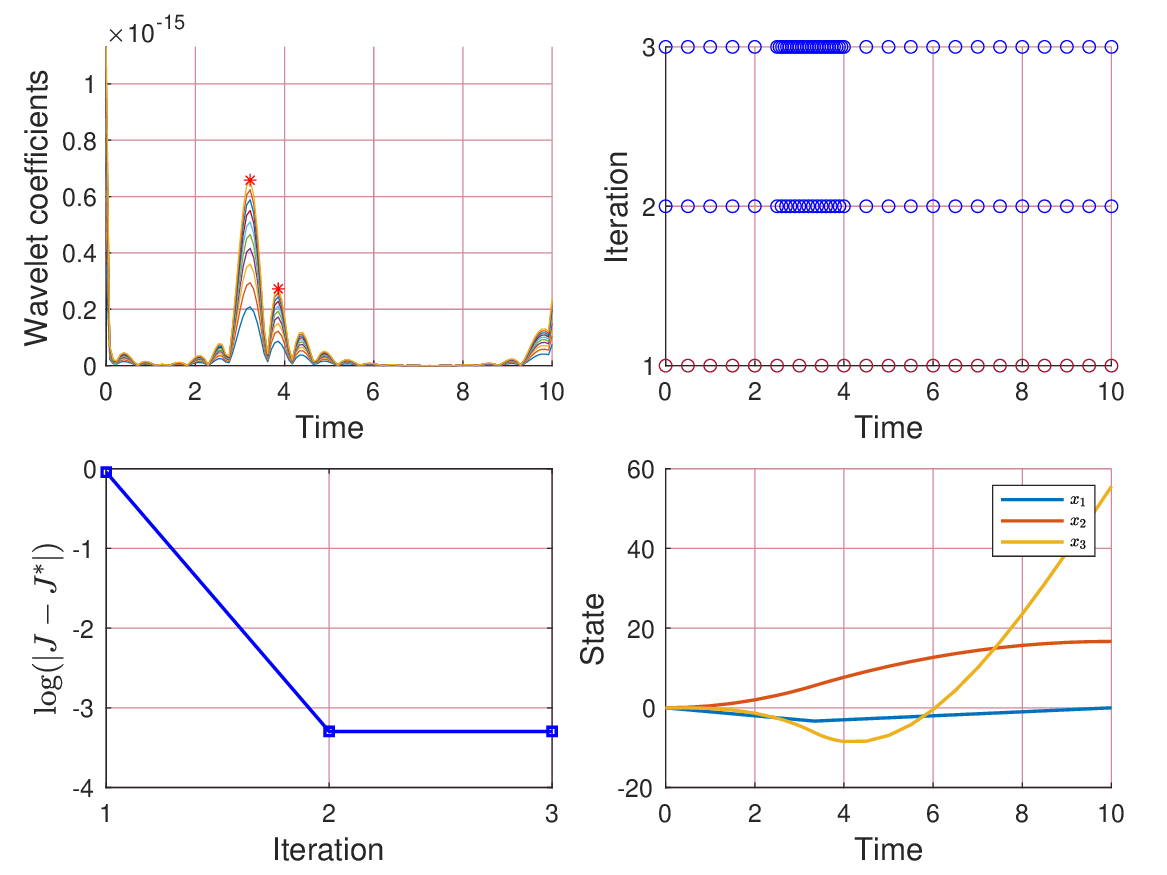}
\caption{ Wavelet coefficients for a set of scales of order \(10^{-3}\) (top left), mesh history during refinement (top right), cost variation (bottom left), and state trajectories after refinement(bottom right)  for Example \ref{num:bressan}.}\label{fig:BRSN_DMCS}
\end{figure}
We performed the wavelet transform of the resulting control trajectory and located the wavelet modulus maxima; see Figure \eqref{fig:BRSN_DMCS} (top left). The modulus maxima were located at \(\tau = \aset[]{3.228,3.858}\), and hence the mesh was refined around these specific locations as per Algorithm \eqref{alg:refinement_algo} with the width parameter \(\width= 1/21 \) units. Moreover, three grid intervals surrounding the localized change points in the uniform grid were refined; this is shown in Figure \eqref{fig:BRSN_DMCS} (top right). We observed that the specified termination tolerance of \(\bar{\reftol}= 10^{-4}\) was met after the two iterations, and as a result, the algorithm terminated with final mesh size \(\Nfin = 45\).

The corresponding minimized cost over the iterations and the final state trajectories are shown in Figure \eqref{fig:BRSN_DMCS} (bottom). The control trajectory obtained using the initial uniformly spaced mesh via baseline-\(\quito\) (with \(\Ninit=20\)), the control trajectory from \(\quito\) \(\vertwo\), the control trajectories obtained via ICLOCS2 with LGR collocation and Integrated Residue Minimization (IRM) are depicted in Figure \eqref{fig:BRSNcontrol}. The corresponding optimization statistics are shown in Table \eqref{vdo:BRSN-cpu-time}.

We observed that \(\quito\) \(\vertwo\) performs better (with mere \(\Ninit=20\) steps along with refinement) than the baseline-\(\quito\) solution obtained via employing \(\Ninit=200\) steps (i.e., a much finer grid); see Figure \ref{fig:BRSNcontrol}, the red trajectory. We also observed that \(\quito\) \(\vertwo\) performs better than the integrated residue minimization and LGR collocation algorithms (both implemented via ICLOCS2), and the multiple shooting technique using piecewise constant parameterization of control (implemented via ACADO) in terms of closeness with the analytical optimal control trajectory. Notice the \emph{ringing phenomena} in the control trajectory corresponding to the LGR collocation module, which is a common numerical defect that direct collocation techniques suffer from in many singular optimal control problems (the same defect will appear in the next two problems as well; see \S\ref{num:catalystmixOCP} and \S\ref{num:SIRIOCP}). 
\begin{figure}[htpb]
\centering
\includegraphics[scale=0.6]{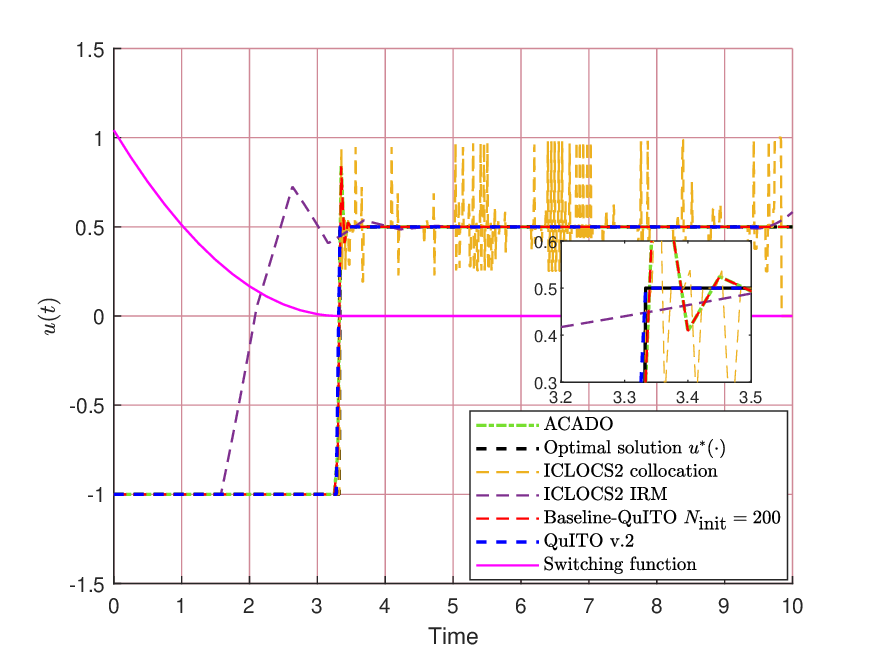}
\caption{Control trajectories obtained using \(\quito\) \(\vertwo\), ICLOCS2's collocation and IRM, and baseline-\(\quito\) for Example \ref{num:bressan}.}\label{fig:BRSNcontrol}
\end{figure}
\begin{figure}[h!]
\centering
\includegraphics[scale=0.6]{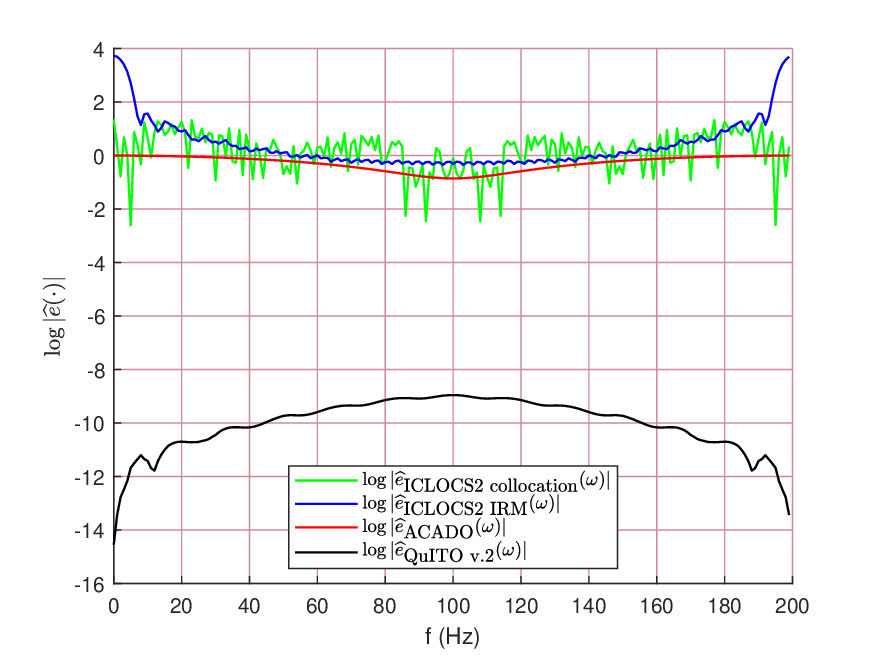}
\caption{DFT plot of the control error trajectories in log scale for Example \ref{num:bressan}.}\label{fig:BRSNerror&dft}
\end{figure}
In contrast, \(\quito\) \(\vertwo\) shows no such unwanted artifacts. The DFT (Discrete Fourier Transform) plots of the control error trajectories obtained by employing \(\quito\) \(\vertwo\), LGR collocation (with active mesh refinement), IRM, and ACADO are given in Figure \ref{fig:BRSNerror&dft}, where \({e}_{\texttt{method}}(\cdot)\) denotes the difference between the analytical control \(u\as(\cdot)\) and the numerical control obtained via choosing any of the \texttt{method} (\(\quito\) \(\vertwo\), IRM, collocation, or multiple shooting via ACADO), and the \(\hat{\cdot}\) denotes the DFT operator. Analyzing the DFT of the error trajectory between \(u\as(\cdot)\) and \(\ulam(\cdot)\) provides a comprehensive frequency-domain characterization, revealing irregularities appearing in multiple regions over the time horizon. In singular optimal control problems, these irregularities manifest as high-frequency jitter in the DFT of numerical control trajectories generated by collocation techniques. The \(\ell^2\)-norms of the DFT of the errors, i.e., \(\norm{\widehat{e}_{\quito}(\cdot)}_{\ell^2}\), \(\norm{\widehat{e}_{\text{collocation}}(\cdot)}_{\ell^2}\), \(\norm{\widehat{e}_{\text{IRM}}(\cdot)}_{\ell^2}\), and \(\norm{\widehat{e}_{\text{ACADO}}(\cdot)}_{\ell^2}\) respectively, were found to be \(0.001\), \(23.7391\), \(107.3\), and \(11.244\) respectively, via standard MATLAB command \texttt{fft}\footnote{See the URL \url{https://in.mathworks.com/help/matlab/ref/fft.html}. These numbers give a quantitative measure of the superior performance of \(\quito\) \(\vertwo\) in this benchmark example.} and \texttt{norm}.
\begin{table}[htpb]
\centering
\begin{tblr}{l c c}
\hline[2pt]
 \SetRow{azure9}
Method & \(\Ninit\) & \(\Nfin\) & Minimized cost \\ 
\hline[2pt]
\(\quito\) \(\vertwo\) &  \(20\) & \(45\) & \(-55.527\)  \\
 baseline-\(\quito\) &  \(200\) & \(200\) & \(-55.529\)  \\
 ICLOCS2 collocation with mesh refinement \\ \hspace{2mm} (optimal control rings) & \(20\)  & \(824\) & \(-55.494\)  \\
  ICLOCS2 Integrated Residue Method  & 20 & \(20\)  & \(-23.012\)   \\
  ACADO  & \(45\) & \(45\) & \(-55.355\)    \\
  ACADO  & \(200\) & \(200\) & \(-55.529\)    \\
 \hline[2pt]
\end{tblr}
\vspace{2.5mm}
\caption{A comparison of optimization statistics for Example \ref{num:bressan}.}
\label{vdo:BRSN-cpu-time}
\end{table}
To enhance the comparison, we also present control trajectories obtained using higher-order collocation techniques; see Figure \ref{fig:bressan_trapandhermite}.
\begin{figure}[!ht]
  \centering
  \begin{subfigure}[b]{0.49\linewidth}
    \includegraphics[width=\linewidth]{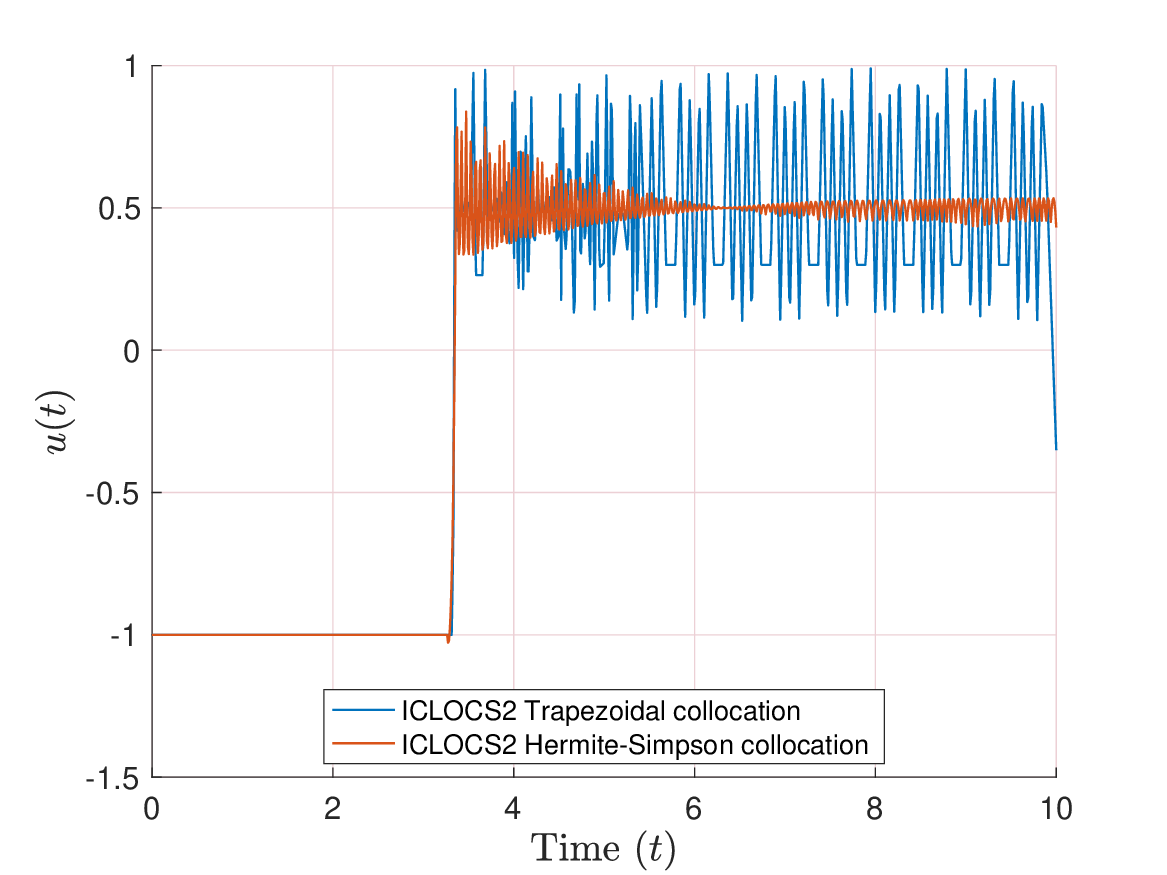}
  \end{subfigure}
  \begin{subfigure}[b]{0.49\linewidth}
    \includegraphics[width=\linewidth]{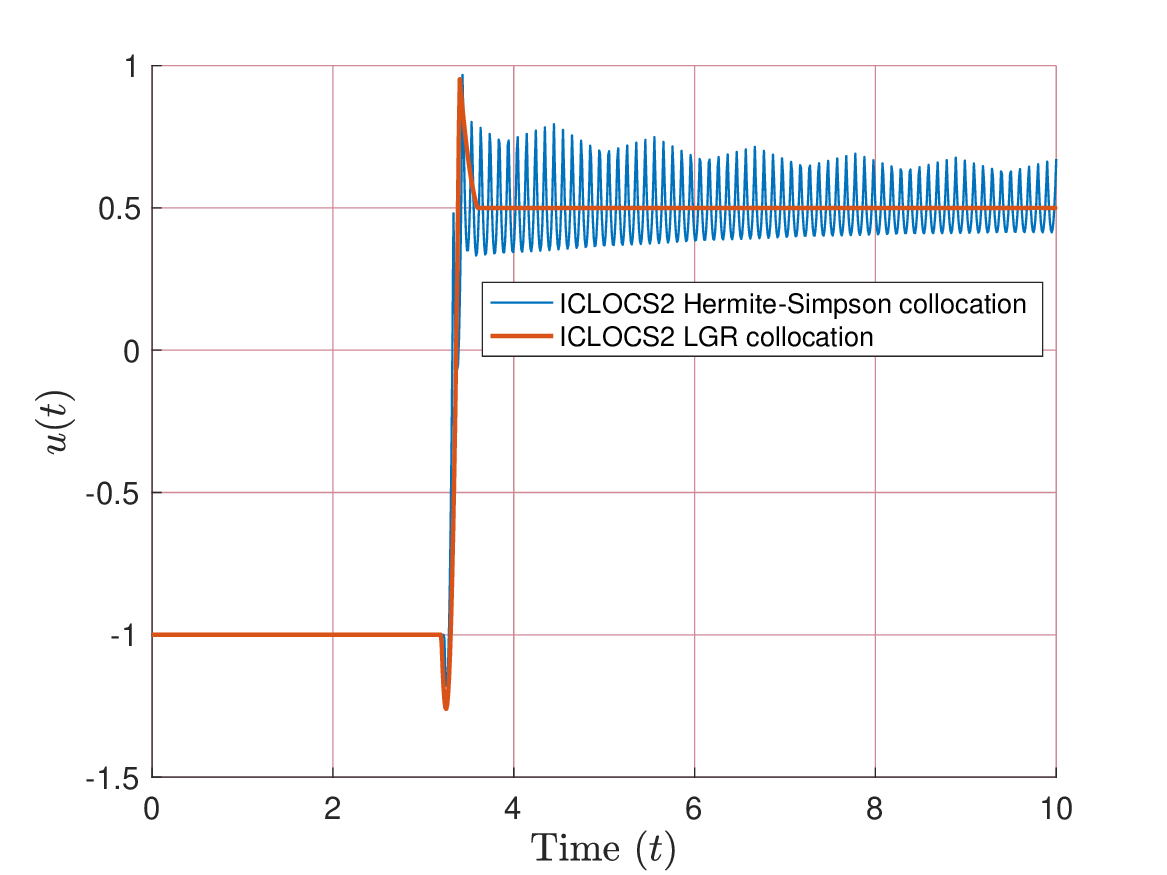}
  \end{subfigure}
 \caption{(Bressan problem) Numerical control trajectories obtained by Trapezoidal and Hermite-Simpson collocation using piecewise cubic Hermite interpolating polynomial for state and control representation (left-hand subfigure). Numerical control trajectories obtained by Hermite-Simpson and LGR collocation (via ICLOCS2) using piecewise cubic and Legendre polynomial respectively for state and control representation (right-hand subfigure). ICLOCS2 was employed for both the cases. 
}
 \label{fig:bressan_trapandhermite}
\end{figure}




\subsection{Catalyst Mixing Problem}\label{num:catalystmixOCP}
Consider the catalyst mixing optimal control problem in \cite{ref:NLP2010LorenzT, ref:CatalystmixingRJackson}, which is known to feature a bang-singular-bang control profile: 
\begin{equation}
\label{eq:catalystmix_OCP_con}
\begin{aligned}
& \minimize_{\cont(\cdot)}	&&   \st_1(T) + \st_2(T) - 1 \\
&  \sbjto		&&  \begin{cases}
\dot{\st}_1(t)= -\cont(t) \left( k_1 \st_1(t) - k_2 \st_2(t) \right),\\
\dot{\st}_2(t)= \cont(t) \left( k_1 \st_1(t) - k_2 \st_2(t) \right) - \left(1-\cont(t)\right)k_3 \st_2(t),\\
\st(0)^{\top} =  (1 \,\, 0)^{\top},\,\horizon \Let 4, \\
 0 \le u(t) \le 1, \\
k_1= 1,\, k_2=10, \text{and } k_3=1,
\end{cases}
\end{aligned}
\end{equation}
and the analytical expression of the optimal control trajectory is given by 
\begin{equation}
\label{eq:catalystoptimalsol}
\begin{aligned}
&  \lcrc{0}{\horizon} \ni  t \mapsto \cont\as(t) \Let  \begin{cases}
1 & \text{if} \,  \, 0\le t < 0.1363, \\
0.2271 & \text{if}\, \, 0.1363 \le t < 3.725, \\
0 & \text{if}\, \, 3.725 \le t \le \horizon.
\end{cases}
\end{aligned}
\end{equation}

We transcribed the OCP \eqref{eq:catalystmix_OCP_con} using baseline-\(\quito\) and solved it on a \emph{uniform grid} with size \(\Ninit=100\) followed by a wavelet localization of the obtained control trajectory.
\begin{figure}[htpb]
\includegraphics[scale=0.55]{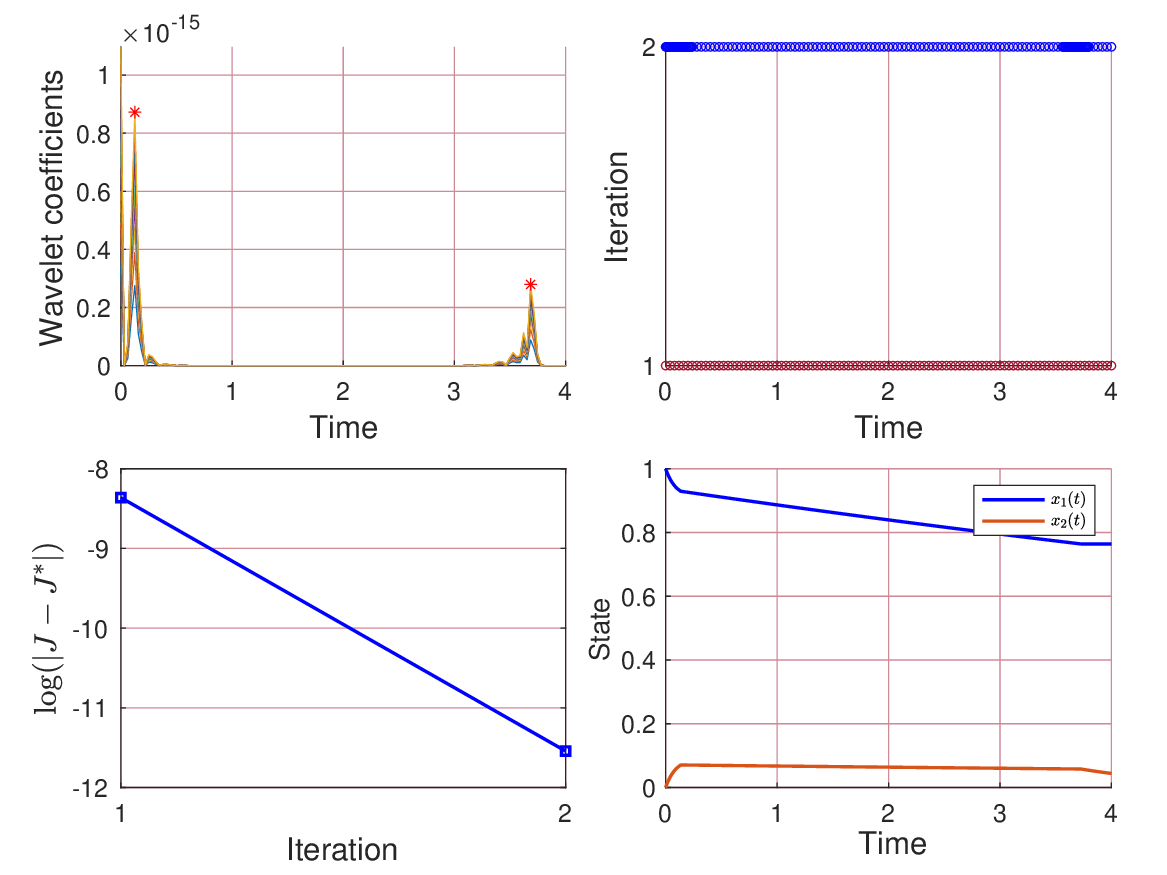}
\caption{ Wavelet coefficients for a set of scales of order \(10^{-3}\) (top left), mesh history during refinement (top right), cost variation (bottom left), and state trajectories after refinement (bottom right)  for Example \ref{num:catalystmixOCP}.}\label{fig:CM_DMCS}
\end{figure}
As shown in Figure \eqref{fig:CM_DMCS} (top left), we obtained wavelet modulus maxima at \(\tau = \aset[]{0.125,3.685}\), and subsequently the mesh was refined around these specific locations as per Algorithm \eqref{alg:refinement_algo}. We observed that the specified error tolerance was met after the first iteration and the algorithm terminated successfully. The corresponding minimized cost over the iterations and the state trajectories are shown in Figure \eqref{fig:CM_DMCS} (bottom left and right). The control trajectory on a fine uniform grid with \(\Ninit = 200\) steps obtained by baseline-\(\quito\), the refined trajectory via \(\quito\) \(\vertwo\), LGR collocation and IRM with \(100\) steps (both via ICLOCS2), and direct multiple shooting with piecewise constant parameterization of control trajectories with \(100\) steps (via ACADO) are depicted in Figure \eqref{fig:CMcontrol}. 
\begin{figure}[h!]
\centering
\includegraphics[scale=0.7]{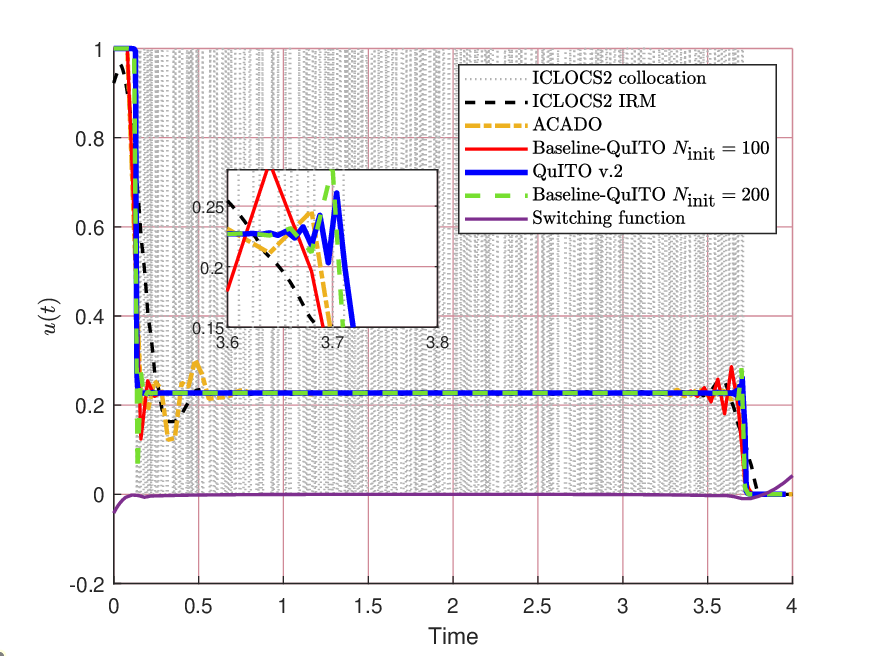}
\caption{Control trajectories obtained using \(\quito\) \(\vertwo\), ICLOCS2's collocation and IRM, and baseline-\(\quito\) for Example \ref{num:catalystmixOCP}.}\label{fig:CMcontrol}
\end{figure}

We observed that the oscillatory behavior near the jumps in the baseline-\(\quito\) solution (with \(\Ninit = 100\) steps) was significantly minimized by selective mesh refinement. Additionally, the computation time to obtain the refinement solution was lower and the solution behaved qualitatively better (in terms of minimizing the cost) compared to the baseline-\(\quito\) solution on a much denser uniform grid with \( \Ninit = 200\) steps. This shows the efficacy of our mesh refinement algorithm in comparison to baseline-\(\quito\)'s solution with a finer uniform grid. We also noted that the control trajectory obtained by employing LGR collocation/pseudospectral method via ICLOCS2 exhibits \emph{ringing behaviour} while no such undesirable behaviour was observed in \(\quito\) \(\vertwo\) solutions with or without refinement. Additionally, the IRM (employed via ICLOCS2) did not suffer from ringing, but it was computationally much more demanding and took several iterations to solve the problem. The solution obtained by ACADO also shows oscillations near the change points. The corresponding optimization statistics are recorded in Table \eqref{vdo:CM-cpu-time} and it can be seen that \(\quito\) \(\vertwo\) performed better (in terms of solution quality and computation time) in comparison to the baseline-\(\quito\), LGR collocation (solution quality), integrated residue minimization (solution quality and computation time), and ACADO (solution quality). This is demonstrated in Figure \ref{fig:CMerror&dft} where we depicted the DFT plots of control error trajectories (obtained via \(\quito\) \(\vertwo\), LGR collocation, IRM, and multiple shooting via ACADO) in log scale. The \(\ell^2\)-norms of the DFT of the errors, i.e., \(\norm{\widehat{e}_{\quito}(\cdot)}_{\ell^2}\), \(\norm{\widehat{e}_{\text{collocation}}(\cdot)}_{\ell^2}\), \(\norm{\widehat{e}_{\text{IRM}}(\cdot)}_{\ell^2}\), and \(\norm{\widehat{e}_{\text{ACADO}}(\cdot)}_{\ell^2}\) respectively, were found to be \(1.183\), \(80.7597\), \(11.1856\), and \(8.2158\) respectively. To enhance the comparison, we also present control trajectories obtained using higher-order collocation techniques; see Figure \ref{fig:catalyst_trapandhermite}.
\begin{figure}[htpb]
\centering
\includegraphics[scale=0.6]{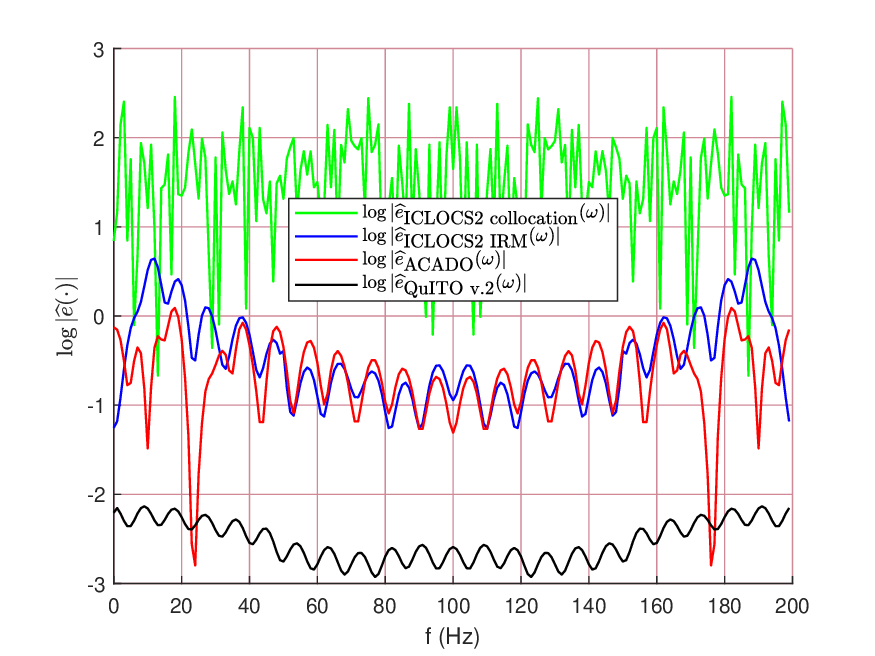}
\caption{DFT plot of the control error trajectories in log scale for Example \ref{num:catalystmixOCP}.}\label{fig:CMerror&dft}
\end{figure}
\begin{table}[h!]
\centering
\begin{tblr}{l c c}
\hline[2pt]
\SetRow{azure9}
Method & \(\Ninit\) & \(\Nfin\) & Minimized cost \\ 
\hline[2pt]
\(\quito\) \(\vertwo\) &  100 & 149 & \(- 0.191\)  \\
  baseline-\(\quito\) & 200 & 200 & \(- 0.191\) \\
 ICLOCS2 collocation with\\mesh refinement (optimal control rings) & 100  & 12197&  \(- 0.191\)  \\
 ICLOCS2 Integrated Residue Method  & 100 & 100 & \(- 0.191\)  \\
 ACADO  & \(100\) & 100  & \(-0.191\)  \\
 \hline[2pt]
\end{tblr}
\vspace{2.5mm}
\caption{A comparison of optimization statistics for Example \ref{num:catalystmixOCP}.}
\label{vdo:CM-cpu-time}
\end{table}



\begin{figure}[!ht]
  \centering
  \begin{subfigure}[b]{0.49\linewidth}
    \includegraphics[width=\linewidth]{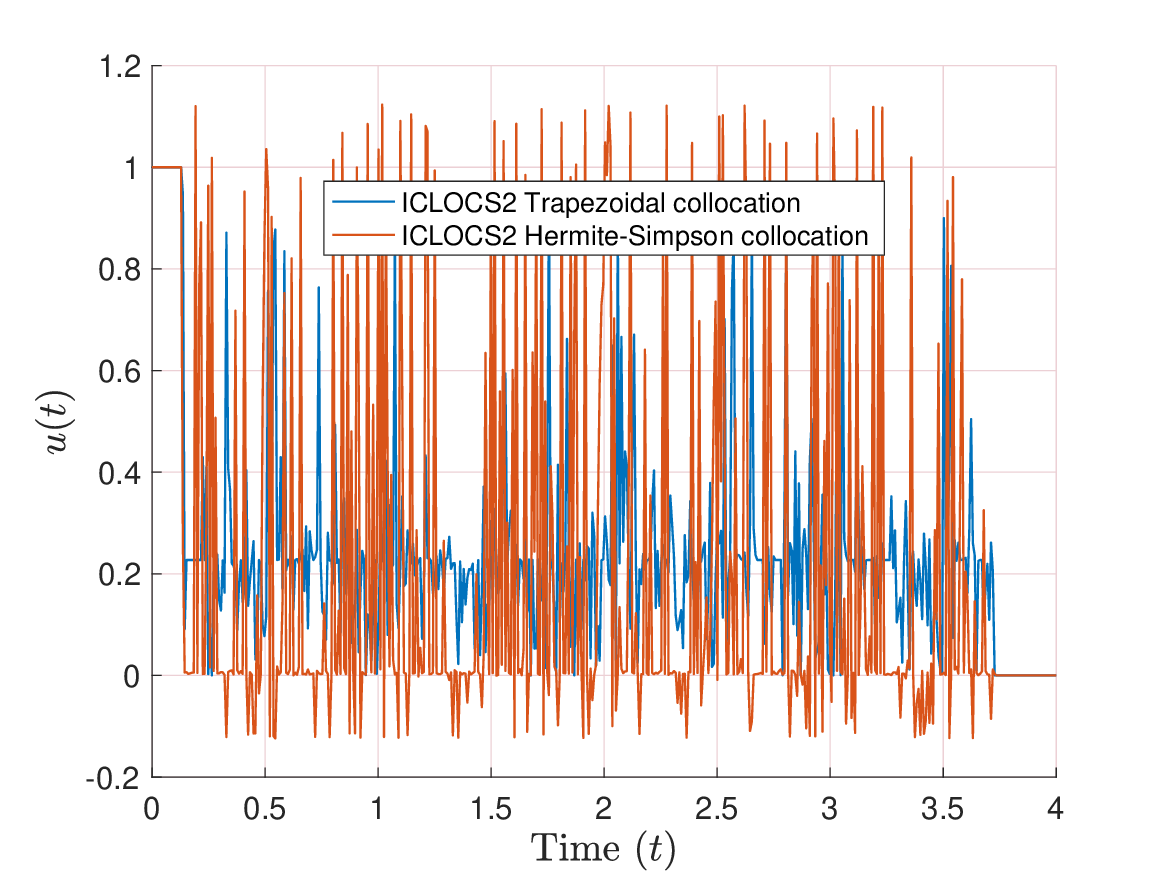}
  \end{subfigure}
  \begin{subfigure}[b]{0.49\linewidth}
    \includegraphics[width=\linewidth]{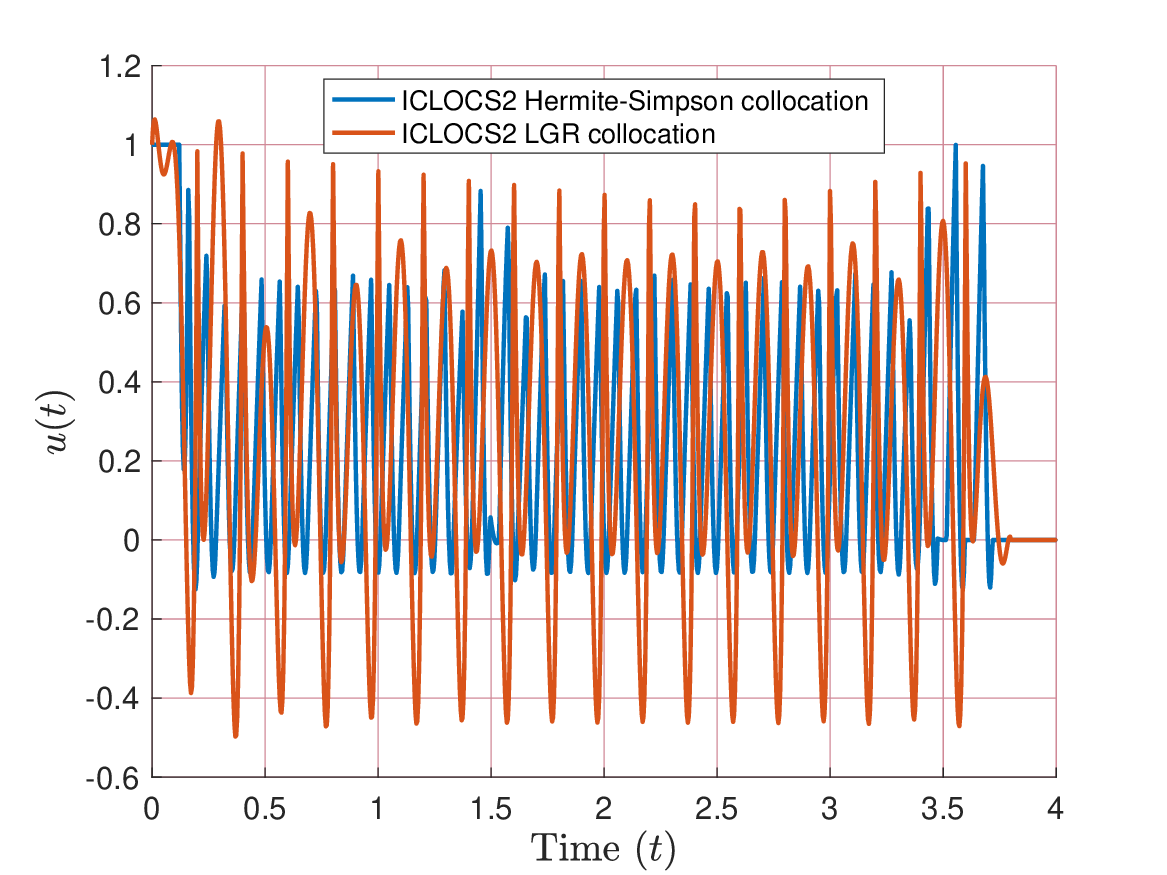}
  \end{subfigure}
 \caption{(Catalyst mixing problem) Numerical control trajectories obtained by Trapezoidal and Hermite-Simpson collocation using piecewise cubic Hermite interpolating polynomial for state and control representation (left-hand subfigure). Numerical control trajectories obtained by Hermite-Simpson and LGR collocation (via ICLOCS2) using piecewise cubic and Legendre polynomial respectively for state and control representation (right-hand subfigure). ICLOCS2 was employed for both the cases. 
}
 \label{fig:catalyst_trapandhermite}
\end{figure}


\subsection{SIRI epidemics}\label{num:SIRIOCP}
We considered the Susceptible-Infected-Recovered-Infected model (SIRI)\footnote{We sincerely thank Urmee Maitra and Ashish R. Hota from the Indian Institute of Technology Kharagpur, for providing us with the model and relevant problem data associated with this problem.} where an individual is characterized by three possible states --- susceptible, infected, or recovered. The quantities \(\beta \in \loro{0}{1}\) and \(\hat{\beta} \in \loro{0}{1}\) denote the rate at which a susceptible and a recovered individual becomes infected when coming in contact with an infected individual, respectively. The recovery rate of an individual is denoted by \(\gamma_r \in \loro{0}{1}\). The controlled SIRI model consists of the following control actions: vaccination rate of susceptible individuals denoted by \(u_v(\cdot)\), rate of social distancing adopted by individuals in the susceptible and recovered state which we denote by \(u_s(\cdot)\) and \(u_r(\cdot)\). With this premise, the controlled-SIRI dynamics are given by:
\begin{align}\label{eq:SIRI_dyn}
    \dot{x}_i (t) &= c_ s x_s(t) + c_ r x_r(t) + c_i(1-x_s(t) - x_r(t)) - c_s x_s(t) u_s(t) \nn \\&- c_r x_r(t) u_r(t) + c_v x_s(t) u_v(t) \nn \\
\dot{x}_s (t)  & =  - \beta x_s(t) (1-x_s(t) - x_s(t)) u_s(t) - x_s(t) u_v(t) \nn \\
\dot{x}_r(t) & = \gamma_r (1-x_s(t)-x_r(t)) - \hat{\beta} x_r(t) (1-x_s(t)-x_r(t))u_r(t) + x_s(t) u_v(t),
\end{align}
where \(x_i(t), x_s(t),\) and \(x_r(t)\) are the system states at time \(t\) and they represent the proportion of individuals in infected, susceptible, and recovered states. Over the set of feasible controls \(u_s(\cdot)\), \(u_r(\cdot)\), and \(u_v(\cdot)\), we considered the optimal control problem:
\begin{equation}
\label{eq:SIRI}
\begin{aligned}
& \minimize_{u_s(\cdot),\,u_r(\cdot),\,u_v(\cdot)}	&&  x_i(\tfin) \\
& \hspace{5mm} \sbjto		&&  \begin{cases}
\text{dynamics }\eqref{eq:SIRI_dyn},\\
0.1 \le u_s(t) \le 1,\, 0.1 \le u_r(t) \le 1, \\ 
0 \le u_v(t) \le 0.9, \, 0 \le x_s(t) \le 1, \\
0 \le x_r(t) \le 1, x(0) \Let \bigl(0.2\,\, 0.8\,\, 0 \bigr)^{\top}, \\
c_s =2,\,c_r=2,\,c_v=3,\,c_i=5,\,\beta=0.7,\,\hat{\beta}=0.5, \\
\gamma_r = 0.5,\,\text{and }T=20.
\end{cases}
\end{aligned}
\end{equation}
We started with a sparse \emph{uniform grid} with \(\Ninit=50\) steps on which we solved the OCP \eqref{eq:SIRI} via baseline-\(\quito\). The termination tolerance was set to \(\bar{\reftol}=10^{-3}\). The mesh refinement algorithm was successfully terminated after one iteration of localized refinement around the jump points \(\tau \Let \{1.889, 6.14, 7.24\}\); see Figure \ref{fig:SIRI_DMCS} (top left). 
\begin{figure}[htpb]
\includegraphics[scale=0.75]{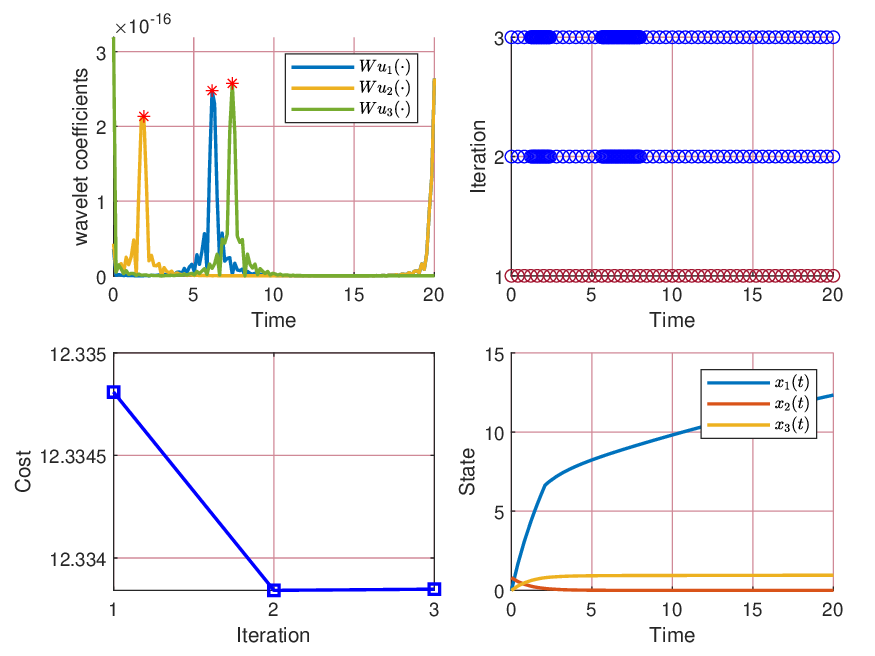}
\caption{ Wavelet coefficients for scale \(b_0 = 10^{-3}\) (top left), mesh history during refinement (top right), cost variation (bottom left), and state trajectories after refinement(bottom right)  for Example \ref{num:SIRIOCP}.}\label{fig:SIRI_DMCS}
\end{figure}
The final mesh with size \(\Nfin = 87\) is shown in Figure \eqref{fig:SIRI_DMCS} (top right). The corresponding control trajectories obtained via the baseline-\(\quito\) with \(N = 50\), \(\quito\) \(\vertwo\), LGR collocation and integrated residue minimization (both using ICLOCS2) are shown in Figure \eqref{fig:SIRIcontrol}. Table \eqref{vdo:Siri-cpu-time} collects the optimization statistics. 
\begin{figure}[htpb]
\centering
\includegraphics[scale=0.6]{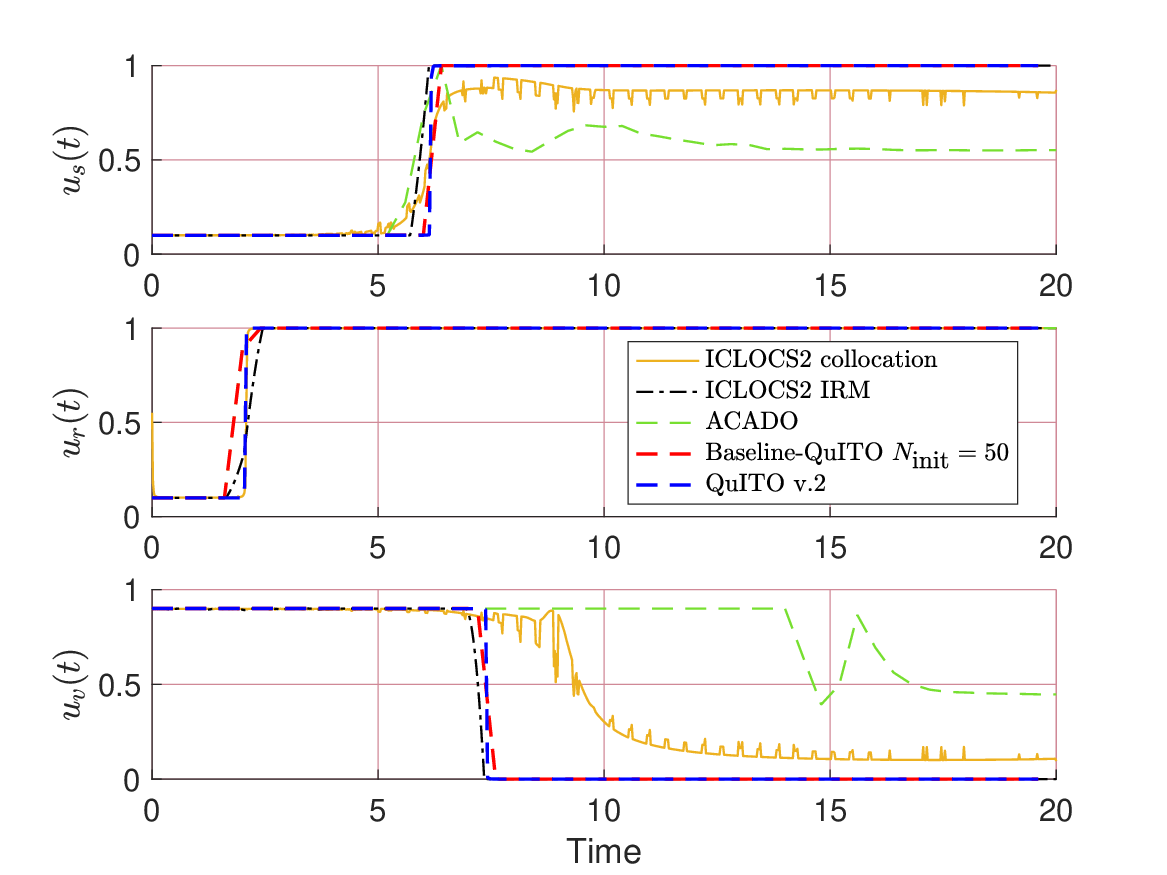}
\caption{Control trajectories obtained using \(\quito\) \(\vertwo\), ICLOCS2's collocation and IRM, and baseline-\(\quito\) for Example \ref{num:SIRIOCP}.}\label{fig:SIRIcontrol}
\end{figure}
We observed that the ICLOCS2 integrated residual method features a lower cost of solution trajectory compared to \(\quito\) \(\vertwo\), but it requires a long computation time and fails to capture and represent the jumps accurately. 

\begin{table}[htbp]
\centering
\begin{tblr}{l c c}
\hline[2pt]	
 \SetRow{azure9}
Method & \(\Ninit\) & \(\Nfin\) & Minimized cost  \\ 
\hline[2pt]
\(\quito\) \(\vertwo\) & 50 & 123 &  12.533 \\
 baseline-\(\quito\) without refinement & 200 & 200 & 12.533  \\
 ICLOCS2 collocation with \\mesh refinement (optimal control rings) & 50 & 18665  &  12.534 \\
 ICLOCS2 Integrated Residue Method  & 50 & 50 &  12.535  \\
 ACADO  & \(50\) & 50  & \(13.591\)    \\
 \hline[2pt]
\end{tblr}
\vspace{2.5mm}
\caption{A comparison of optimization statistics for Example \ref{num:SIRIOCP}.}
\label{vdo:Siri-cpu-time}
\end{table}






\subsection{Multi agent robot motion planning}\label{numexp:auv_pathpanning}
We considered a \(18\)-dimensional multi-agent path-planning and obstacle avoidance problem of an Autonomous Underwater Vehicle (AUV) using \(\quito\). In this experiment we considered three different agents \(\aset[]{\asf,\bsf,\csf}\). Let us specify all the problem data for one such agent first. The dynamical system \cite{ref:weston2024application} is given by the ODEs consisting of six state variables and six control variables: 
\begin{align}\label{eq:auv_dyn}
    \dot{x}_{\asf,1}(t) = M_{\asf}^{-1} \bigl( \widehat{x_{\asf,1}(t)}(x_{\asf,2}(t)) - D_{\asf,s} x_{\asf,1}(t)+ u_{\asf, 1}(t)  \bigr), \nn \\ \dot{x}_{\asf,2}(t) = I_{\asf}^{-1} \bigl( \widehat{I_{\asf}x_{\asf,2}(t)}(x_{\asf,2}(t)) - D_{\asf,r}x_{\asf,2}(t)+u_{\asf,2}(t) \bigr),
\end{align}
where \(t \mapsto x^{\asf}(t) \Let (x_{\asf,1}(t),x_{\asf,2}(t)) \in \Rbb^6\) and \(t \mapsto u^{\asf}(t) \Let (u_{\asf,1}(t),u_{\asf,2}(t)) \in \Rbb^6\). More precisely \(x_{\asf,1}(t) = (x_{\asf}(t),y_{\asf}(t),z_{\asf}(t))\in \Rbb^3\) represents the translation states, \(x_{\asf,2}(t) = (\rho_{\asf}(t),\theta_{\asf}(t),\psi_{\asf}(t)) \in \Rbb^3\) represents the rotational states, and \(u_{\asf,1}(t) \in \Rbb^3\), \(u_{\asf,2}(t) \in \Rbb^3\). \(M_{\asf}=10\) is the mass of the AUV, \(D_{\asf,s} \Let \mathsf{diag}(0.1,5,5)\), \(D_{\asf,r} \Let \mathsf{diag}(5,5,0.1)\) are both \(3 \times 3\) diagonal matrices that represents damping effects. The symmetric, positive definite inertia matrix is given by 
\begin{align}
    I_{\asf} \Let \begin{pmatrix}
        3.45 & 0 & 1.28 \times 10^{-15}\\ 0 & 3.45 & 0 \\ 1.28 \times 10^{-15} & 0 & 2.2 
    \end{pmatrix},
\end{align}
and the map \(\hat{\cdot}: \Rbb^3 \lra \mathfrak{so}(3)\) in \eqref{eq:auv_dyn} is the standard \emph{hat} map given by 
\[\Rbb^3 \ni \omega \Let \begin{pmatrix} \omega_1 \\ \omega_2 \\ \omega_3 \end{pmatrix} \mapsto \hat{\omega} \Let \begin{pmatrix}
    0 & \omega_3 & -\omega_2 \\ -\omega_3 & 0 & \omega_1 \\ \omega_2 & - \omega_1 & 0
\end{pmatrix} \in \mathfrak{so}(3),\]
where \(\mathfrak{so}(3)\) consisting of skew-symmetric matrices is the Lie algebra of the special orthogonal group \(\mathsf{SO}(3)\). The states are constrained in the six-dimensional box i.e., \(x^{\asf}(t) \in \lcrc{0}{15}^3 \times \lcrc{-\pi}{\pi}^3\) and the controls are restricted in the box \(u^{\asf}(t) \in \lcrc{-100}{100}^6\). We additionally imposed the path constraints that correspond to the 3D obstacles that all the agents must avoid:
\begin{equation}\label{eq:path_constraints}
    \mathsf{p}(\st_{\asf,1}(t)) \Let 
\begin{cases} 
\bigl( \frac{x_{\asf}(t)-8.3}{1+ \buffer }\bigr)^2 + \bigl( \frac{y_{\asf}(t)-8.3}{1+  \buffer }\bigr)^2 + \bigl( \frac{z_{\asf}(t)-8}{1+  \buffer}\bigr)^2 \ge 1,\\

\bigl( \frac{x_{\asf}(t)-3}{2+ \buffer}\bigr)^2 + \bigl( \frac{y_{\asf}(t)-6.5}{2+ \buffer}\bigr)^2 + \bigl( \frac{z_{\asf}(t)- 5}{2+ \buffer}\bigr)^2 \ge 1, \\
\bigl( \frac{x_{\asf}(t)-7.3}{2+ \buffer}\bigr)^2 + \bigl( \frac{y_{\asf}(t)-2.7}{2+ \buffer}\bigr)^2 + \bigl( \frac{z_{\asf}(t)-5}{2+ \buffer}\bigr)^2 \ge 1, \\
\bigl( \frac{x_{\asf}(t)-10}{2+ \buffer}\bigr)^2 + \bigl( \frac{y_{\asf}(t)-10}{2+ \buffer}\bigr)^2 + \bigl( \frac{z_{\asf}(t)-2}{2+ \buffer}\bigr)^2 \ge 1, \\

\bigl( \frac{x_{\asf}(t)-1.7}{1+ \buffer}\bigr)^{10} + \bigl( \frac{y_{\asf}(t)-1.7}{1+ \buffer}\bigr)^{10} + \bigl( \frac{z_{\asf}(t)-2}{1+ \buffer}\bigr)^{10}\ge 1, \\

\bigl( \frac{x_{\asf}(t)-2}{1.3+ \buffer}\bigr)^{10} + \bigl( \frac{y_{\asf}(t)-14}{2.3+ \buffer}\bigr)^{10} + \bigl( \frac{z_{\asf}(t)-5}{5+ \buffer}\bigr)^{10}\ge 1, \\

\bigl( \frac{x_{\asf}(t)-14}{1.3+ \buffer}\bigr)^{10}+ \bigl( \frac{y_{\asf}(t)-1}{2.3+ \buffer}\bigr)^{10} + \bigl( \frac{z_{\asf}(t)-5}{5+ \buffer}\bigr)^{10}\ge 1.

\end{cases}
\end{equation}
The first four path constraints in \eqref{eq:path_constraints} are ellipsoids and the subsequent three are cuboidal constraints. To mimic a practical scenario we consider all the AUV bodies to have a radius of \(0.5\) meters with a buffer radius \(0.2\) meters, making \(\buffer \Let 0.7\) meters. We denote the total state and control vector of the three agent system by \(t \mapsto \widecheck{x}(t) \Let (x^{\asf}(t),x^{\bsf}(t),x^{\csf}(t)) \in \Rbb^{18}\) and \(t \mapsto \widecheck{u}(t) \Let (u^{\asf}(t),u^{\bsf}(t),u^{\csf}(t)) \in \Rbb^{18}\). With these ingredients, we considered the multi-agent optimal control problem:
\begin{equation}
\label{eq:multiagent_RobMotion}
\begin{aligned}
& \minimize_{\widecheck{u}(\cdot)}	&&  \int_{0}^{10} \inprod{\widecheck{x}(t)}{Q \widecheck{x}(t)} + \inprod{\widecheck{u}(t)}{R\widecheck{u}(t)}\, \dd t \\
&  \sbjto		&&  \begin{cases}
\text{dynamics }\eqref{eq:auv_dyn} \text{ for all the agents }\asf,\bsf,\csf,\\
x^{\asf}(0)\Let (10_{3\times 1},0_{3\times 1}),  x^{\bsf}(0) \Let (6,10,6,0_{3\times 1}), \\ x^{\csf}(0) \Let (10,6,10,0_{3\times 1}),
x^{\asf}(10)\Let (0_{6\times 1}), \\ x^{\bsf}(10) \Let (2,0_{5\times 1}), x^{\csf}(10) \Let (0,2,0_{4\times 1}), \\
x^{i}(t) \in \lcrc{0}{15}^3 \times \lcrc{-\pi}{\pi}^3, u^{i}(t) \in \lcrc{-100}{100}^6 \,\text{for }i \in \aset[]{\asf,\bsf,\csf},\\
\text{all agents satisfy path constraints }\mathsf{p}(x_{\asf,i}(t)) \,\text{for }i=\asf,\bsf,\csf,\\
\norm{x^{i}(t)-x^{j}(t)} \ge \sqrt{2} \,\,\text{for all }i,j \in \aset{\asf,\bsf,\csf},
\end{cases}
\end{aligned}
\end{equation}
where \(\Rbb^{18 \times 18} \ni Q \Let \mathsf{diag}(Q_{\asf},Q_{\bsf},Q_{\csf})\) with \(Q_{\asf} = Q_{\bsf} = Q_{\csf} = \mathsf{diag}(1_{3\times 1},50_{3 \times 1})\) and \(\Rbb^{18 \times 18} \ni R \Let \mathsf{diag}(R_{\asf},R_{\bsf},R_{\csf})\) with \(R_{\asf} = R_{\bsf} = R_{\csf} = \mathsf{diag}(1,10,10,1,5,10)\). For simplicity, we kept the mass, inertia matrices, and the damping matrices same for all the AUV agents. 
\begin{figure}[h]
\centering
\includegraphics[scale=0.43]{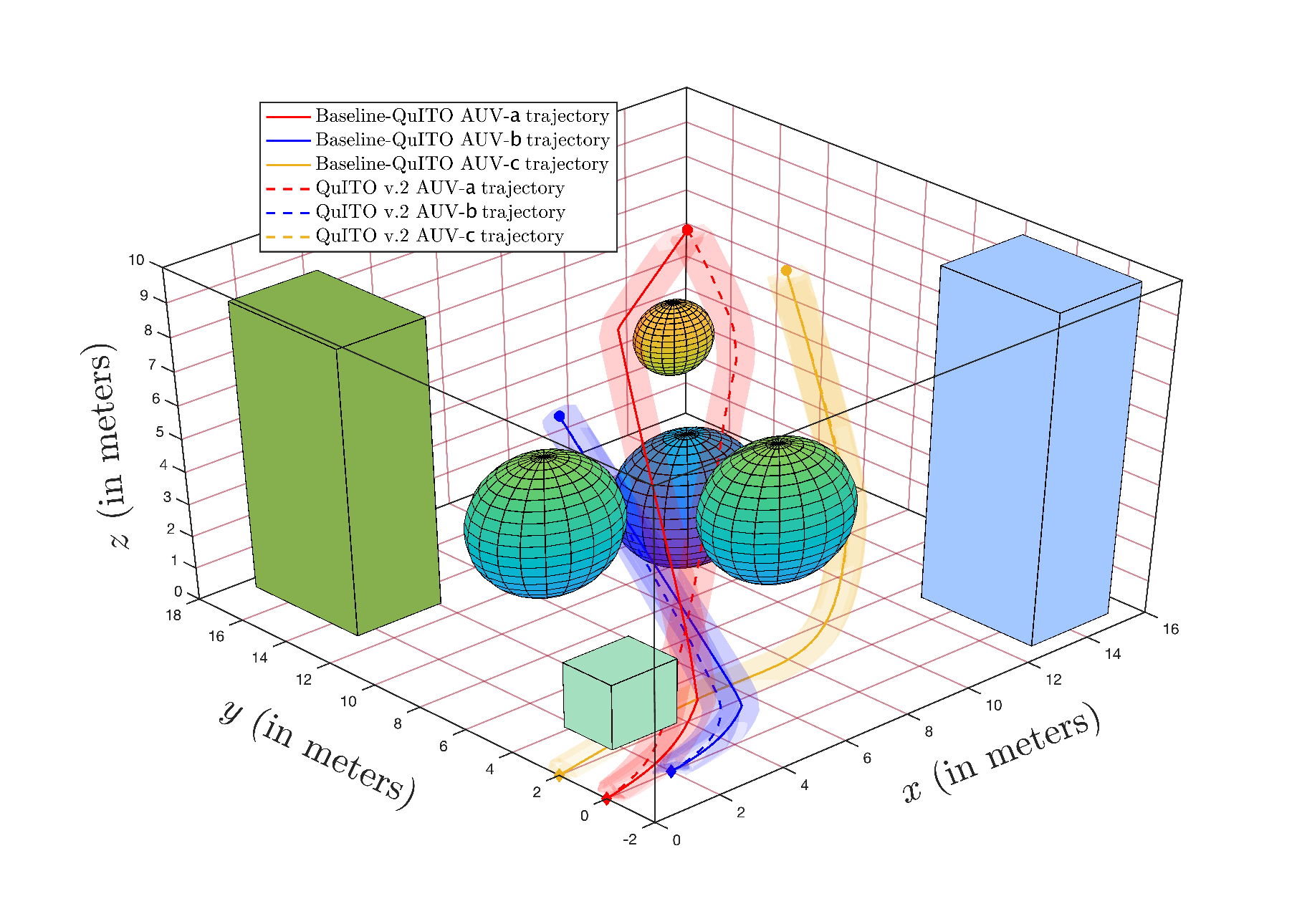}
\caption{Trajectories of the AUVs \(\asf,\bsf,\csf\) for Example \ref{numexp:auv_pathpanning}.}\label{fig:AUVpath}
\end{figure}
We solved the OCP \eqref{eq:multiagent_RobMotion} using the baseline-\(\quito\) with \(\Ninit = 30\) and \(\Dd=1.07\). Next, we employed the localization algorithm \ref{alg:detection_algo} to locate the possible change points in the approximate control trajectory. We found that the wavelet maximas occur at \(\tau = \aset[]{0.157,4.488,9.3}\); see Figure \ref{fig:AUVdistwavelet} (left). To keep the computational burden low we refined only near the first initial jump which has a higher magnitude compared to others and set \(\bar{\reftol}=10\).
\begin{figure}[h!]
\includegraphics[scale=0.5]{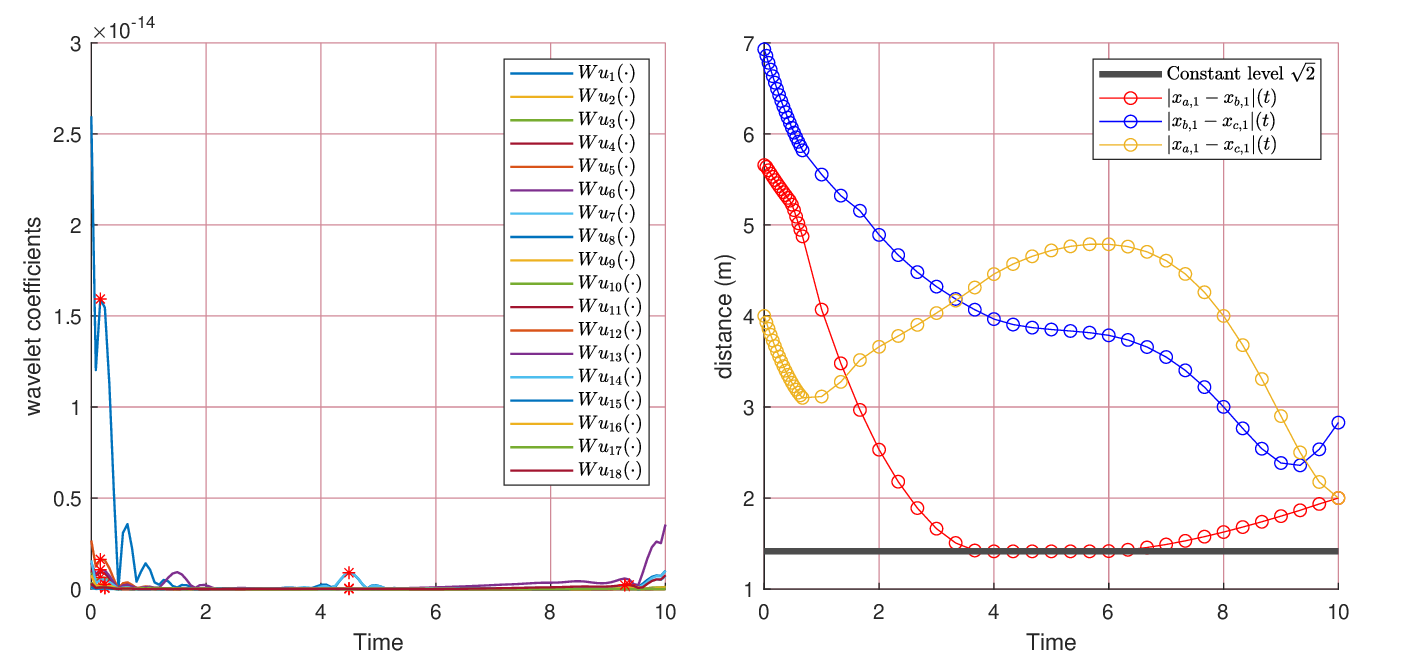}
\caption{Wavelet coefficients for scale \(b_0 = 10^{-3}\) (left) and intermediate distance between the AUVs (right) for Example \ref{numexp:auv_pathpanning}.}\label{fig:AUVdistwavelet}
\end{figure}
We observed the specified error tolerance was met after two iterations and the algorithm terminated successfully, with minimized cost \(6062.4\) for baseline-\(\quito\) and \(5205.3\) with \(\quito\) \(\vertwo\). The total computation time was \(268.32\) seconds for baseline-
\(\quito\) and \(885.04\) seconds with \(\quito\) \(\vertwo\). In baseline-\(\quito\) the AUV-\(\asf\) and AUV-\(\bsf\) both collide with the ellipsoidal constraints while the AUV trajectories in \(\quito\) \(\vertwo\) satisfies all the constraints with a reduced amount of cost, giving a clear indication of better performance with localization and mesh refinement; see Figure \ref{fig:AUVpath}.\footnote{An animation of the AUV trajectories can be found in the YouTube video \url{https://youtu.be/CPDhne9kNHk}.}






\subsection{Discussion on CPU times}\label{subsec:CPUtimediscussion}
This section presents a comparison of computation times of various methods that we employed in the numerical experiments with \(\quito\) \(\vertwo\). From the data recorded in Table \ref{tab:computation_list} we make the following observations: 
\begin{itemize}[leftmargin=*]
    \item \(\quito\) \(\vertwo\) shows improved performance in comparison to baseline-\(\quito\) both in terms of solution accuracy and computation time for all the numerical examples considered above.
    \item For singular optimal control problems, when compared to collocation techniques \(\quito\) \(\vertwo\) shows better performance in terms of solution accuracy but collocation techniques are faster in general (of course, ringing cannot be avoided); see \S\ref{num:bressan} and \S\ref{num:catalystmixOCP}.
    \item For singular problems, while performance with IRM improves when compared to collocation, it is still subpar compared to \(\quito\) \(\vertwo\) both in terms of solution accuracy and computation time;  see \S\ref{num:bressan} and \S\ref{num:catalystmixOCP}.
    \item The multiple shooting algorithm with piecewise constant control parameterization is much faster than \(\quito\) \(\vertwo\). This is natural since the software is optimized, and written in \(\text{C}\)++. But in terms of performance, as demonstrated in the numerical examples, \(\quito\) \(\vertwo\), produces more accurate control trajectories in examples treated in \S\ref{num:bangbangOCP}, \S\ref{num:bressan}, \S\ref{num:catalystmixOCP}, and \S\ref{num:SIRIOCP}. 
\end{itemize}

\begin{table}[h!]
\begin{tblr}{l| c| c| c}
\hline[2pt]
\SetRow{azure9}
Problem & Method & \(\Ninit\) & CPU time (in secs) \\ 
\hline[2pt]
Example \eqref{num:bangbangOCP} &  \(\quito\) \(\vertwo\) & \(50\)  & \(4.42\) \\
(Bang-bang problem) &  ICLOCS2 collocation & \(50\) & \(11.97\) \\ 
 &  ICLOCS2 IRM & \(50\) & \(15.90\) \\
 &  ACADO & \(50\) & \(4.94\) \\
 \hline
Example \eqref{num:bressan} & \(\quito\) \(\vertwo\) & \(20\) & \(1.61\) \\ 
(Bressan problem) &  baseline-\(\quito\) & \(200\) & \(69.20\) \\
 &  ICLOCS2 collocation & \(20\) & \(21.42\) \\ 
 &  ICLOCS2 IRM & \(20\) & \(41.76\) \\
 &  ACADO & \(45\) & \(3.86\) \\
 &  ACADO & \(200\) & \(7.47\) \\
 \hline
Example \eqref{num:catalystmixOCP}  & \(\quito\) \(\vertwo\) & \(100\) & \(16.41\) \\ 
(Catalyst-mixing problem) &  baseline-\(\quito\) & \(200\) & \(95.14\) \\
 &  ICLOCS2 collocation & \(100\) & \(221.21\) \\ 
 &  ICLOCS2 IRM & \(100\) & \(3106.70\) \\
 &  ACADO & \(100\) & \(6.71\) \\
 \hline
Example \eqref{num:SIRIOCP} & \(\quito\) \(\vertwo\) & \(50\) & \(86.19\) \\ 
(SIRI problem) &  baseline-\(\quito\) & \(200\) & \(726.90\) \\
 &  ICLOCS2 collocation & \(50\) & \(146.14\) \\ 
 &  ICLOCS2 IRM & \(50\) & \(1387.20\) \\
 &  ACADO & \(50\) & \(5.76\) \\
 \hline
Example \eqref{numexp:auv_pathpanning}  & \(\quito\) \(\vertwo\) & \(30\) & \(885.04\) \\   
(Multi-agent problem) & & \\
\hline[2pt]
\end{tblr}
\centering
\vspace{2.5mm}
\caption{A comparison of CPU times.}
\label{tab:computation_list}
\end{table}


\section{\(\quito\) \(\vertwo\): the software architecture and functionalities}\label{sec:software}
We encoded the direct transcription, localization, and refinement algorithm developed herein into the previous version of the software \(\quito\) \cite{ref:QuITO:SoftX}, and we call the new package \(\quito\) \(\vertwo\). The toolbox \(\quito\) \(\vertwo\) employs CasADi \cite{ref:CASADI_andersson2019} for the declaration of symbolic variables and to interface optimization solvers. The software package consists of four core blocks:
\begin{enumerate}[leftmargin=*]
    \item The directory \texttt{TemplateProblem} contains the template of the software. Users can solve their optimal control problems by adding relevant problem data, for example, the cost functions, dynamics, constraints, etc.

    \item The directory \texttt{examples} contains a library of pre-coded examples.  

    \item The directory \texttt{src} contains scripts for transcription, along with the change point localization, and mesh refinement algorithms developed in this work. The directory \texttt{src}  must be added to the MATLAB path to run the software.

    \item \(\quito\) \(\vertwo\) also comes in with a Graphical User Interface. The directory \texttt{Graphical Interface} builds the GUI for a one-click solution to the example problems. 
\end{enumerate}
\vspace{2mm}
A schematic diagram of the software is provided in Figure \eqref{fig:QuITO_flowchart}.
\begin{figure}[htbp]
\centerline{\includegraphics[scale=0.6]{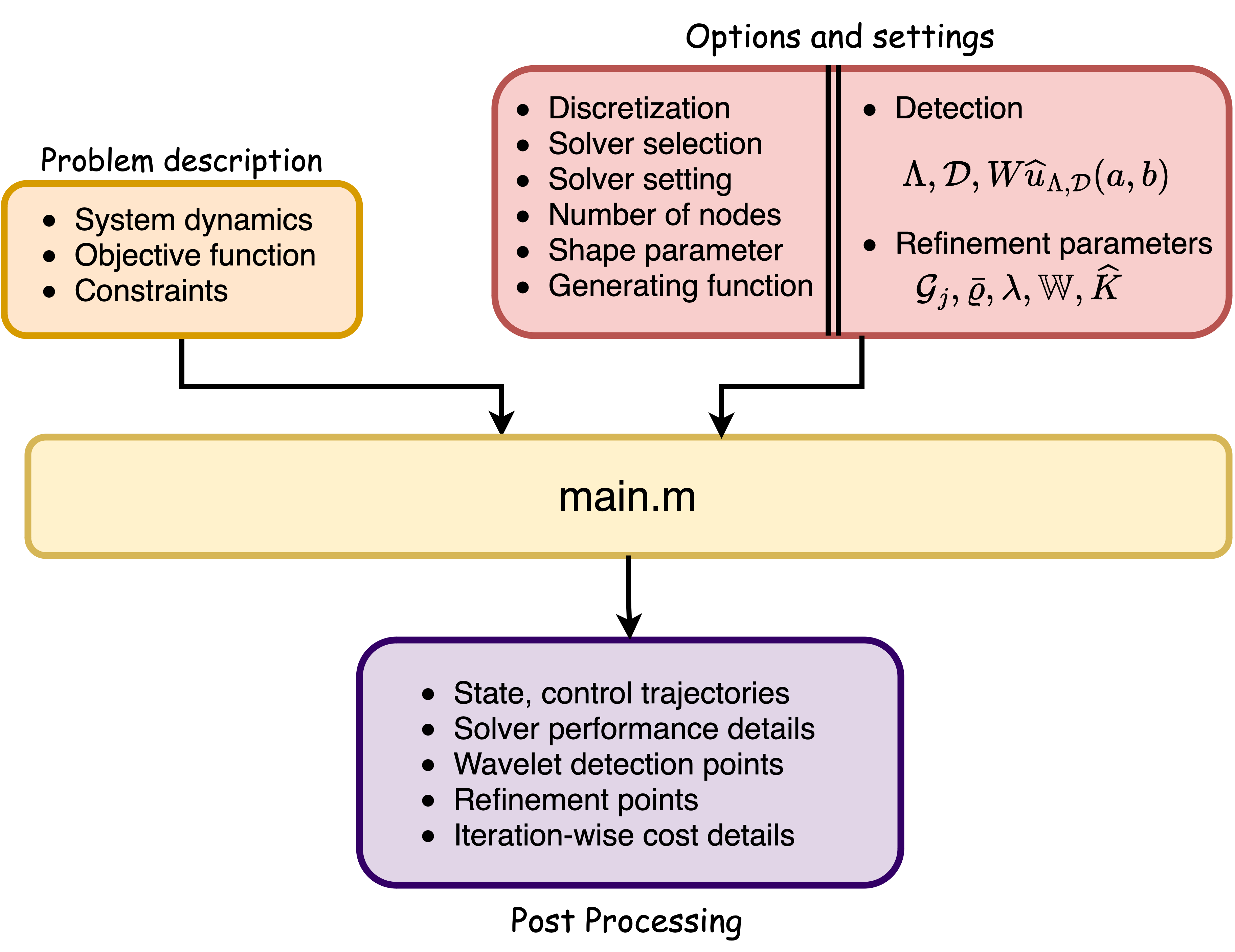}}
\caption{Flowchart of \(\quito\) \(\vertwo\).}
\label{fig:QuITO_flowchart}
\end{figure}
\(\quito\) \(\vertwo\) employs the same architecture as in the previous version \(\quito\) \cite{ref:QuITO:SoftX}. A user-specific optimal control problem can be built using the files  \texttt{TemplateProblem.m}, \texttt{options.m}, \texttt{main.m}, and \texttt{postProcess.m} contained in the \texttt{TemplateProblem} directory. The problem data needs to be defined in \texttt{TemplateProblem.m} and the solver settings such as choice of discretization, NLP solver, etc., need to be specified in the \texttt{options.m} file. 
The meshing strategy can be chosen by specifying the \texttt{options.mesh\_strategy} parameter. For selective mesh refinement, the refinement parameters \(\bar{\reftol}, \width, \lambda\), and \(\widehat{K}\) can be set using the commands :
\begin{lstlisting}[
  style      = Matlab-editor,
  basicstyle = \mlttfamily,
   numbers = none,
]
options.MR_termination_tol= 0.0001;
options.MR_width_factor= 21; % fraction of the time horizon (T)
%Refinement width parameter = T/options.MR_width_factor
options.MR_rate=4; 
options.MR_max_iter=10; 
\end{lstlisting}
To solve the OCP, the user needs to input the arguments \((N,\Dd)\) to the \texttt{options} function in the \texttt{main.m} file. The first argument \(N>0\) is the total number of steps by which the time horizon \(\lcrc{0}{T}\) has been divided (see \S\ref{subsubsec:meshrf_scheme}), which, when using mesh refinement, is naturally the size of the initial uniform grid. The second parameter \(\Dd\in \loro{0}{+\infty}\) is the shape parameter of the generating function \(\genfn(\cdot)\). While our theoretical setup considers only the Gaussian generating function, in the software \(\quito\) \(\vertwo\) we kept several other options for choosing \(\genfn(\cdot)\). These options include Laguerre polynomial-based Gaussian, trigonometric Gaussians, hyper trigonometric functions, etc. The file \texttt{main.m} accesses the problem parameters and optimization specifications from files \texttt{TemplateProblem.m}, and \texttt{options.m}, and computes the control trajectory and the state trajectory using the \texttt{src/solveProblem.m} file (available in the path) and subsequently the localization and refinement algorithms are employed depending on the regularity of the control trajectory. Finally \texttt{postProcess.m} plots the refined control, and state trajectories.
\begin{lstlisting}[
  style      = Matlab-editor,
  basicstyle = \mlttfamily,
   numbers = none,
]
problem = TemplateProblem; 
opts = options(100, 2);   
solution = solveProblem(problem, opts);
postProcess(solution, problem, opts)
\end{lstlisting}
The user can choose to plot the iterative cost variation and mesh history by appropriately setting the \texttt{options.MRplot} parameter in the \texttt{options.m} file:
\begin{lstlisting}[
  style      = Matlab-editor,
  basicstyle = \mlttfamily,
   numbers = none,
]
% Mesh refinement plot options
% 0: Do not plot
% 1: Plot mesh history
% 2: Plot iterative cost variation
% 3: Plot all above
options.MRplot=3;
\end{lstlisting}

\section{Discussion and concluding remarks}
We introduced a complete algorithmic package --- \(\quito\) \(\vertwo\) --- a direct trajectory optimization algorithm for uniform approximations of constrained optimal control trajectories driven by quasi-interpolation and adaptive mesh refinement. We developed a change point localization technique based on wavelets and subsequently established a mesh refinement algorithm driven by this localization technique. Several benchmark constrained OCPs were numerically solved to demonstrate the efficacy of \(\quito\) \(\vertwo\). It was demonstrated that for classes of \emph{singular} optimal control problems, which are well-known hard-to-solve problems in numerical optimal control, \(\quito\) \(\vertwo\) works remarkably well compared to state-of-the-art tools based on pseudospectral collocation, integrated residue minimization technique, and ACADO. Moreover, an updated version of the software \(\quito\), which we call \(\quito\) \(\vertwo\), was developed along with a GUI module for ease of implementation and has been made available at:
\begin{table}[htbp]
\begin{tblr}{l}
	\hline[{red6},3pt]
		\SetRow{red9}\SetCell[c=3]{c} \url{https://github.com/chatterjee-d/QuITOv2.git}
& 3-2
& 3-3
 \\ 
 \hline[{red6},3pt]
\end{tblr}
\centering
\label{gitrepo}
\end{table}

\section{Acknowledgment}
S.\ G.\ and D.\ C.\ thank Ravi N.\ Banavar for his comments and suggestions during the initial stages of this work, and Rohit Gupta, Mayank Baranwal, and Ashish Hota for helpful discussions and encouragement. R.\ A.\ D.\ thanks Mukesh Raj for assistance with the GUI implementation of \(\quito\) \(\vertwo\) and Yuanbo Nie from the Dept of Automatic Control and Systems Engineering, University of Sheffield, for his assistance over email with the implementation of an example using ICLOCS2.

\bibliographystyle{amsalpha}
\bibliography{refs_new}

\end{document}